\numberwithin{equation}{section}
\newcommand{\Rom}[1]{\uppercase\expandafter{\romannumeral #1}}
\newcommand{\Fc}{\mathbf{F}^c}
\newcommand{\Si}{\Sigma}
\newcommand{\mS}{\mathcal{S}}
\newcommand{\mR}{\mathcal{R}}
\newcommand{\an}{{\rm An}}
\newcommand{\eps}{{\varepsilon}}
\newcommand{\Lip}{{\text {Lip}}}
\newcommand{\diam}{{\text {diam}}}
\newcommand{\R}{{\rm R}}
\def\RR{{\mathbb{R}}}
\def\NN{{\mathbb{N}}}
\newcommand{\e}{{\text {e}}}
\newcommand{\cC}{{\mathcal{C}}}
\newcommand{\cD}{{d}}
\newcommand{\cF}{{\mathcal{F}^c}}
\newcommand{\cH}{{\mathcal{H}}}
\newcommand{\cO}{{\mathcal{O}}}
\newcommand{\cV}{{\mathcal{V}}}
\newcommand\B{{\mathcal{B}}}
\renewcommand\d{{\rm d}\,}
\def\rn#1{{\bf R}^{#1}}
\newcommand\haus{\mathcal{H}}
\newcommand\Z{{\mathbb Z}}
\newcommand\N{{\mathbb N}}
\newcommand\res{\mathop{\hbox{\vrule height 7pt width .5pt depth 0pt
\vrule height .5pt width 6pt depth 0pt}}\nolimits}
\newcommand\supp{{\rm supp}\,}  
\newcommand\Inj{{\textrm{Inj}\,}}
\newcommand\FF{{\mathbf{F}}}
\newcommand\de{{\rm Deg}\,}
\newcommand\aan{{\mathcal{A}n}}
\newcommand\An{{\mathcal{AN}}}
\newcommand\ind{{\bf 1}}
\def\Id{\mathrm{Id}}
\newtheorem{Theorem}{Theorem}[section]
\newtheorem{Lemma}[Theorem]{Lemma}
\newtheorem{Proposition}[Theorem]{Proposition}
\newtheorem{Definition}[Theorem]{Definition}
\theoremstyle{remark}
\newtheorem{Remark}[Theorem]{Remark}
\numberwithin{equation}{section}
\newcommand{\norm}[1]{\left\lVert#1\right\rVert}
\def\H{\mathcal H}
\def\R{\mathbb R}
\def\N{\mathbb N}
\def\Z{\mathbb Z}
\def\e{\varepsilon}
\def\vphi{\varphi}
\def\Om{\Omega}
\def\de{\delta}
\def\Id{{\rm Id}}
\def\spt{{\rm spt}}
\def\pa{\partial}
\def\rn#1{{\mathbb R}^{#1}}
\renewcommand{\d}{\mathrm{d}}
\def\Lip{{\rm Lip}\,}
\def\C{\mathbf{C}}
\def\B{\mathcal{B}}
\def\Id{\mathrm{Id}}
\def\Inj{\mathrm{Inj}}
\DeclareMathOperator{\Div}{div}
\begin{document}

\title[The anisotropic Min-Max theory]
{The anisotropic Min-Max theory: Existence of anisotropic minimal and CMC surfaces}

\author[G. De Philippis]{Guido De Philippis}
\address{G. De Philippis: Courant Institute of Mathematical Sciences, New York University, 251 Mercer St., New York, NY 10012, USA.}
\email{guido@cims.nyu.edu}

\author[A. De Rosa]{Antonio De Rosa}
\address{A. De Rosa: Department of Mathematics, University of Maryland, 4176 Campus Dr, College Park, MD 20742, USA}
\email{anderosa@umd.edu}

\begin{abstract}
We prove the existence of nontrivial closed surfaces with constant anisotropic mean curvature with respect to elliptic integrands in closed smooth $3$--dimensional Riemannian manifolds. The constructed min-max surfaces are smooth with at most one singular point. The constant anisotropic mean curvature can be fixed to be any real number. In particular, we  partially solve a conjecture of Allard \cite{Allard1983} in dimension $3$.
\end{abstract}

\maketitle
 
\section{Introduction}
The min-max theory has been extremely successful in finding critical points of the area functional, commonly referred to as minimal surfaces. Almgren \cite{Alm0} started a  monumental program to develop a variational theory for minimal surfaces of arbitrary dimension and codimension using geometric measure theory. In particular, he proved the existence of a nontrivial weak solution as a stationary integral varifold \cite{Alm}. The regularity of the solution in codimension one was established  by Pitts \cite{P} up to dimension $6$ of the ambient manifold, relying on the curvature estimates for stable minimal surfaces proved by Schoen-Simon-Yau \cite{SSY}. Then it has been extended to every dimension by Schoen-Simon \cite{SS} (always in codimension one). Thereafter, Colding-De Lellis \cite{CD} and De Lellis-Tasnady \cite{DLTas} have proved a similar construction using smooth sweepouts, building on ideas of Simon-Smith \cite{Sm}. Very recently, the zero mean curvature case has been generalized by Zhou-Zhu \cite{ZZ} to the existence of constant mean curvature surfaces in closed Riemannian manifolds. 

In spite of the aforementioned vast literature for the area functional, nothing is known concerning the existence of anisotropic minimal or constant anisotropic mean curvature surfaces.
Allard \cite{Allard1983} proved an analogue of the curvature estimates of Schoen-Simon \cite{SS} and conjectured the existence of anisotropic minimal surfaces in closed Riemannian manifolds \cite[Page 288]{Allard1983}. However he observed that: \emph{``there remains a considerable amount of works to do before this becomes feasible for general integrands''}\cite[Page 288]{Allard1983}.

In this paper we partially solve this problem in dimension $3$, proving the following first main result:
\begin{Theorem}\label{t:existence}
 Let $M$ be a $3$-dimensional $C^4$ closed Riemannian manifold and $F$ be a $C^3$ elliptic integrand. Then there is a nontrivial surface $\Sigma\subset M$ without boundary which is a critical point with respect to $F$. Moreover there exists at most one singular point $p\in M$ for $\Sigma$, i.e. $\Sigma$ is $C^2$ embedded away from $p$. Furthermore one of the following properties hold:
\begin{itemize}
\item[(a)] there exists $R>0$ such that $\Sigma$ is smooth stable in $B_R(x)$ for every $x\in  M$;
\item[(b)] $\Sigma$ is stable in $M\setminus \{p\}$.
\end{itemize}

\end{Theorem}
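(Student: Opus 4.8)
The plan is to run an anisotropic $1$-parameter min-max, in the spirit of Simon--Smith, Colding--De Lellis \cite{CD} and De Lellis--Tasn\'ady \cite{DLTas}, for the energy $\mathbf F(\Sigma):=\int_\Sigma F(x,\nu_\Sigma)\,d\haus^2$, and then to recover regularity by routing the Almgren--Pitts replacement machinery through the two pieces of anisotropic theory that are available: interior smoothness of $\mathbf F$-minimizing Caccioppoli sets in a $3$-manifold (valid since $F$ is $C^3$ and elliptic) and Allard's \cite{Allard1983} curvature estimate for smooth \emph{stable} $\mathbf F$-minimal surfaces. Ellipticity gives $c_0\,\haus^2\le\mathbf F\le c_1\,\haus^2$ on surfaces with $0<c_0\le c_1$ depending only on $F$ and $M$, so $\mathbf F$ is comparable to area and the purely quantitative/topological portions of the classical scheme survive after cosmetic changes.

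First I would fix the class $\Lambda$ of sweepouts $\{\Omega_t\}_{t\in[0,1]}$ of $M$ by Caccioppoli sets with $\Omega_0=\emptyset$, $\Omega_1=M$, continuous in $L^1$ and with $t\mapsto\mathbf F(\partial^{*}\Omega_t)$ continuous, and set $m_0:=\inf_{\Lambda}\sup_t\mathbf F(\partial^{*}\Omega_t)$. Finiteness is immediate from a Morse-function sweepout; positivity is the first genuine point and follows because every sweepout has a slice with $\Vol(\Omega_{t_*})=\Vol(M)/2$, whose perimeter is bounded below by the isoperimetric inequality on $M$, so $m_0\ge c_0\,\delta(M)>0$. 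Next I would carry out the Colding--De Lellis pull-tight adapted to $\mathbf F$: along a minimizing sequence of sweepouts one arranges that any varifold $V$ obtained as a limit of slices with $\mathbf F$-mass $\to m_0$ is $\mathbf F$-stationary (zero anisotropic first variation, hence a critical point for $F$) and is $\mathbf F$-almost minimizing in sufficiently small annuli. This step is where the interpolation and discretization lemmas of Almgren and Colding--De Lellis must be reproven with $\mathbf F$ in place of $\haus^2$, using the comparability above together with the fact that the anisotropic first variation is still a continuous linear functional of the deforming vector field. Because the family is $1$-parameter, the usual bookkeeping gives a min-max index budget of one unit, which will cap the number of singular points.

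Then I would establish regularity by replacements. On an annulus $\mathcal A$ on which $V$ is $\mathbf F$-almost minimizing, replace $V$ inside $\mathcal A$ by an $\mathbf F$-minimizer $V'$ with the same boundary trace; since $\dim M=3$ and $F$ is $C^3$ elliptic, $V'$ is a smooth embedded surface in the interior of $\mathcal A$, is stable as a minimizer, and attaches in $C^1$ to $V$ across $\partial\mathcal A$. Iterating over overlapping annuli and invoking the strong maximum principle / unique continuation for the anisotropic minimal surface equation — a uniformly elliptic quasilinear PDE by ellipticity of $F$ — forces $V$ to coincide with these replacements, so $V$ is a smooth embedded stable $\mathbf F$-minimal surface at every point where the almost-minimizing property holds; a double replacement together with Allard's $\varepsilon$-regularity for stable configurations handles the tangent-cone analysis there. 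The dichotomy then encodes how the min-max sequence $\Sigma_k=\partial^{*}\Omega^{(k)}_{t_k}$ converges to $V$: either, after pull-tight, the $\Sigma_k$ are $\mathbf F$-almost minimizing and stable at a fixed scale $R>0$, uniformly in $k$ and $x\in M$, in which case Allard's curvature estimate yields smooth convergence and $V$ (hence $\Sigma$) is smooth and stable in every $B_R(x)$, which is alternative (a); or curvature concentrates at a point $p$, in which case away from $p$ the $\Sigma_k$ still converge smoothly as stable surfaces, so $\Sigma$ is smooth and stable in $M\setminus\{p\}$ and is $C^2$ embedded on a punctured neighborhood of $p$ by Allard's estimate on punctured balls, which is alternative (b). A concentration point carries local instability (otherwise Allard's estimate would preclude concentration there) and so consumes the single unit of min-max index; hence there is at most one such $p$, the unique possible singular point.

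The main obstacle — precisely the ``considerable amount of work'' Allard foresaw — is the absence of both a monotonicity formula and an Allard-type $\varepsilon$-regularity theorem for general $\mathbf F$-stationary varifolds, so Pitts' regularity argument cannot be applied to the min-max varifold directly. Every regularity conclusion has to be obtained through the two classes for which an anisotropic theory exists: $\mathbf F$-\emph{minimizers}, used for the replacements via the $3$-dimensional interior regularity of anisotropic minimizing boundaries, and smooth \emph{stable} surfaces, via Allard's curvature estimate. Making the replacement construction genuinely work in the anisotropic category — existence of minimizing replacements, their stability, their $C^1$ matching with $V$, the gluing through the anisotropic maximum principle — and then isolating the single concentration point $p$ and controlling $\Sigma$ near it, is the technical heart of the proof.
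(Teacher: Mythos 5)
Your overall architecture (sweepouts of Caccioppoli sets, pull-tight, almost minimizing in annuli, replacements, a combinatorial dichotomy) matches the paper's, but you attack the $c=0$ problem directly, and the step you dispose of in one clause --- the replacement ``attaches in $C^1$ to $V$ across $\partial\mathcal A$'' and two consecutive replacements glue --- is exactly the step that does not go through and that the entire paper is organized around. In the isotropic scheme that gluing rests on the monotonicity formula twice: once to get upper density bounds at points of the interface sphere, and once to show (by mass comparison at different scales) that tangent cones there are planes. Neither tool exists for a general elliptic $F$, and your proposed substitute --- ``Allard's $\varepsilon$-regularity for stable configurations handles the tangent-cone analysis'' --- is not available: Allard's 1983 result is an a priori curvature estimate for surfaces already known to be smooth, stable and enjoying upper density bounds; it is not an $\varepsilon$-regularity theorem for $\mathbf F$-stationary varifolds and cannot classify tangent cones of the min-max varifold at interface points. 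As written, your regularity argument has no mechanism to produce the upper density estimate or the planar blow-up at the interface, so the proof of smoothness of $V$ collapses there.

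The paper's missing idea is the reduction to the prescribed-mean-curvature problem: one first runs the min-max for $\mathcal F^c(\Omega)=\mathbf F(\partial\Omega)-c\,\mathcal H^3(\Omega)$ with $c\neq 0$, following Zhou--Zhu. The payoff is that replacements are multiplicity-one boundaries of $\mathcal F^c$-minimizers (with at most a one-dimensional multiplicity-two touching set), so the interface analysis splits into: (i) multiplicity-one boundary points, handled by boundary regularity for anisotropic minimizers plus uniform boundary curvature estimates for stable anisotropic CMC surfaces (this is where two-dimensionality enters); and (ii) isolated multiplicity-two points, where upper density bounds are proved by a barrier/maximum-principle argument using $H_F^{\partial B_t}>2c$ (so $c\neq 0$ is essential) and the blow-up is shown to be a plane with multiplicity two. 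Theorem \ref{t:existence} is then deduced by taking $c_k=1/k\to 0$ and passing to the limit with the compactness theorem for stable anisotropic CMC surfaces. Separately, your claim that a single concentration point is excluded by a ``one unit of index'' budget is not how the at-most-one-singularity conclusion is obtained: since isolated singularities cannot be removed by stationarity plus stability (again for lack of monotonicity), the paper instead strengthens the Almgren--Pitts combinatorial lemma so that $M$ is covered by balls in which the surface is regular away from the centers and every center but one lies inside another ball of the cover.
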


Theorem \ref{t:existence} is obtained from the following second main result:
\begin{Theorem}\label{t:existenceCMC}
 Let $M$ be a $3$-dimensional $C^4$ closed Riemannian manifold, $F$ be a $C^3$ elliptic integrand and $c\in \R\setminus \{0\}$. Then there is a nontrivial surface $\Sigma\subset M$ without boundary which has constant anisotropic mean curvature $c$ with respect to $F$. Moreover there exists at most one singular point $p\in M$ for $\Sigma$, i.e. $\Sigma$ is $C^2$ almost embedded away from $p$. Furthermore one of the following properties hold:
\begin{itemize}
\item[(a)] there exists $R>0$ such that $\Sigma$ is smooth stable in $B_R(x)$ for every $x\in  M$;
\item[(b)] $\Sigma$ is stable in $M\setminus \{p\}$.
\end{itemize}
\end{Theorem}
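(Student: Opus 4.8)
The plan is to build an anisotropic, prescribed-mean-curvature version of the one-parameter Almgren--Pitts/Simon--Smith min-max scheme, in the spirit of the isotropic constant-mean-curvature construction of Zhou--Zhu \cite{ZZ}, and then feed its output into the regularity theory for elliptic integrands. Fix $c\in\R\setminus\{0\}$ and work with Caccioppoli sets $\Omega\subset M$, on which one considers the functional
\[
  \mathcal{A}^c(\Omega)=\mathbf F(\Omega)-c\,\Vol(\Omega),
\]
where $\mathbf F(\Omega)=\int_{\partial^*\Omega}F(x,\nu_\Omega)\,\mathrm{d}\mathcal{H}^2$ is the $F$-perimeter; its critical points among Caccioppoli sets are exactly the surfaces of constant anisotropic mean curvature $c$. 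Over the saturated class of sweepouts $\{\Omega_t\}_{t\in[0,1]}$ of $M$ --- continuous in the flat topology, with $\Omega_0=\emptyset$ and $\Omega_1=M$ --- one sets $\mathbf W=\inf\sup_t\mathcal{A}^c(\Omega_t)$. The first point is that $\mathbf W>0$: since $F$ is elliptic, $\mathbf F$ is comparable from above and below to the Riemannian area, so the Euclidean isoperimetric inequality transfers to $M$ up to a multiplicative constant, and, because $c\neq 0$, every sweepout contains a slice $\Omega_{t_0}$ of small but positive volume for which $\mathcal{A}^c(\Omega_{t_0})\ge c_0(M,F,c)>0$; hence the min-max surface will be nontrivial. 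Theorem~\ref{t:existence} is then recovered by letting $c\to 0$ along a subsequence and using uniform anisotropic-area and regularity bounds to pass to the limit.

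A pull-tight deformation argument next produces a min-max sequence of Caccioppoli sets $\Omega_j$, with $\mathbf F(\Omega_j)$ converging to the critical level, whose associated integral varifolds $V_j$ (carried by $\partial^*\Omega_j$) converge to a limit varifold $V$. Since the $\Omega_j$ have uniformly bounded perimeter, $BV$-compactness yields a Caccioppoli limit $\Omega_\infty$; thus the volume term is continuous along the sequence and $V$ genuinely carries the prescribed curvature --- its anisotropic first variation equals $c$ tested against the conormal of $\partial^*\Omega_\infty$, while $V$ is anisotropic-stationary on whatever part is not seen by $\partial^*\Omega_\infty$. Here $c\neq 0$ is used once more, to prevent the boundary mass from disappearing in the limit. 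By the standard combinatorial (``$1/j$'') argument, adapted to the bounded perturbation $\mathcal{A}^c$ of the anisotropic area, the sequence may moreover be chosen so that $V$ is \emph{almost minimizing in sufficiently small annuli}: in each $B_r(x)\setminus B_s(x)$ it is a limit of sets that nearly minimize $\mathcal{A}^c$ among compactly supported competitors.

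The regularity of $V$ then follows by combining the Allard-type rectifiability and $\varepsilon$-regularity theory for elliptic integrands with Allard's curvature estimates for stable anisotropic minimal surfaces \cite{Allard1983}: wherever the density ratio of $V$ is close to one, $\spt V$ is a $C^2$ embedded surface --- in fact smooth, by Schauder estimates for the anisotropic constant-mean-curvature equation with a $C^3$ integrand --- and a stratification/dimension-reduction argument confines the remaining set to be discrete in the $3$-dimensional ambient. The one-parameter structure bounds the anisotropic Morse index of the regular part by $1$, which produces the dichotomy: in the index-$0$ case we land in (b), $\Sigma$ stable in $M\setminus\{p\}$; in the index-$1$ case a covering and rescaling argument still gives stability on every sufficiently small ball $B_R(x)$, which is (a). It also reduces the possible singular set to at most one point: a replacement/surgery argument near a putative point of mass concentration produces a competitor sweepout with strictly smaller maximal energy unless the single available parameter is ``spent'' there, so at most one such $p$ survives. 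Finally, since $\Sigma=\spt V$ is a limit of reduced boundaries it is a.e.\ two-sided and locally a union of ordered sheets, and the anisotropic strong maximum principle (valid for elliptic $F$) rules out transverse crossings and forces distinct sheets to touch only tangentially; hence $\Sigma$ is $C^2$ almost embedded away from $p$ in the sense of Zhou--Zhu.

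The main obstacle is the regularity step. In the anisotropic world there is no monotonicity formula --- let alone one for varifolds with prescribed anisotropic mean curvature --- so neither Allard's classical $\varepsilon$-regularity nor the standard compactness theory for stationary varifolds is available off the shelf, and a priori one cannot exclude concentration of mass or of multiplicity. Everything rests on having a robust Allard-type theory for elliptic integrands together with Allard's stable-regime curvature estimates, and on transporting the almost-minimality of the $\Omega_j$ intact through the flat and varifold limits; it is exactly this analysis that simultaneously yields smoothness of the regular part and confines the possible singular behaviour to a single point.
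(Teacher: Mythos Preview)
Your overall architecture is right---work with the Zhou--Zhu functional $\mathcal A^c$ on Caccioppoli sets, run pull-tight, extract an almost-minimizing min-max sequence via the Almgren--Pitts combinatorics, and then prove regularity---and you correctly flag the absence of a monotonicity formula as the central difficulty. But the regularity paragraph does not actually overcome that difficulty; it invokes precisely the tools that monotonicity would be needed for.

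Concretely: you appeal to an ``Allard-type $\varepsilon$-regularity theory for elliptic integrands'' and then to ``stratification/dimension-reduction'' to make the singular set discrete. Neither is available here. Allard's regularity theorem for anisotropic integrands requires a density-ratio hypothesis that, without monotonicity, one cannot verify at points of the min-max varifold $V$; and Federer dimension-reduction is driven by tangent-cone uniqueness/monotonicity, which again fails. So the sentence ``wherever the density ratio of $V$ is close to one, $\spt V$ is $C^2$ embedded'' begs the question: one has no mechanism to show the density ratio \emph{is} close to one at generic points of $\spt V$. The paper confronts this head-on by never appealing to $\varepsilon$-regularity at all. Instead it builds \emph{replacements}: in each small annulus the min-max varifold is swapped for a smooth $c$-stable almost-embedded boundary obtained as a limit of local $\mathcal A^c$-minimizers. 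Regularity of $V$ is then proved by showing that two successive replacements glue $C^2$ across the interface sphere. This gluing is the technical heart and splits into two cases: (i) at multiplicity-one boundary points one proves boundary regularity of the inner replacement (an anisotropic boundary-regularity theorem for $\mathcal A^c$-minimizers) and then a \emph{boundary} curvature estimate for $c$-stable surfaces, so the approximating minimizers converge smoothly up to the boundary curve and the PDE unique-continuation does the rest; (ii) at the isolated multiplicity-two touching points one first proves an ad hoc upper-density bound (a barrier argument exploiting $H_F^{\partial B_t}>2c$), then shows every blow-up is $2\cdot T_y\Sigma'$, and concludes graphicality. None of this is in your sketch.

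Your mechanism for ``at most one singular point'' is also different and, as stated, incomplete. You argue by index: the one-parameter sweepout bounds the Morse index by $1$, hence at most one bad point. But without monotonicity one cannot remove an isolated singularity from a $c$-stable anisotropic surface by stability alone (this is exactly what the paper says it cannot do). The paper instead \emph{strengthens the combinatorial lemma}: the almost-minimizing structure is upgraded so that the annulus-radius function $r(x)$ satisfies one of three explicit alternatives (uniform $R$; $r\equiv \mathrm{Inj}(M)/18$; or $r(x)=d(x,p)$ for a single $p$). This trichotomy, not an index count, is what lets one cover $M$ by punctured balls whose centers (except possibly $p$) lie inside another ball, thereby erasing all center-singularities but one and producing the (a)/(b) dichotomy. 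Your surgery/energy heuristic (``the single available parameter is spent there'') is in the right spirit but does not substitute for this refined combinatorics, and in particular does not explain why the remaining singularities---which your stratification would place at \emph{finitely many} points---collapse to one.
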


The assumptions of a $C^4$ Riemannian manifold and a $C^3$ Lagrangian are used to construct a surface $\Sigma$ of class $C^2$. An higher regularity on $M$ and $F$ provides a better regularity also for $\Sigma$. In this paper, if not further specified, smooth will always refer to the above regularities. 

Theorem \ref{t:existence} is the $3$-dimensional anisotropic counterpart of the celebrated existence result for the area functional proved by Pitts in his groundbreaking monograph \cite{P}.
The isotropic version of Theorem \ref{t:existenceCMC} has been recently proved by Zhou-Zhu in \cite{ZZ}.

As previously remarked, Allard has conjectured in \cite{Allard1983} the possibility to run the Pitts min-max construction scheme \cite{P} in the anisotropic setting. Theorem \ref{t:existence} positively answers to this conjecture for $3$-dimensional manifolds.

\medskip

{\bf Strategy of the proof.} The proof follows the scheme developed by Almgren-Pitts \cite{P} for the existence of isotropic minimal surfaces. The scheme consists of two parts: the existence and the regularity part. The existence of an anisotropic stationary rectifiable varifold can be proved with a similar strategy to the one used by Almgren-Pitts  \cite{P}, replacing Allard rectifiability theorem \cite{All} with its anisotropic counterpart proved by the authors \cite{DDG2}. To this aim we need to employ a density lower bound, proved by Allard in \cite{Allard1983}. On the other hand the regularity part has several obstructions. The main issue is in showing that two consecutive replacements glue smoothly. Indeed, due to the lack of monotonicity formula \cite{Allardratio}, it is not clear a priori if upper density estimates hold at the points of the interface of two consecutive replacements. Moreover, the lack of monotonicity formula does not allow to directly prove that the blowups at these points are planes by simple mass comparison at different scales. To solve these issues, our strategy is first to focus on the existence of constant non-zero anisotropic mean curvature (CMC) surfaces. Borrowing ideas from \cite{ZZ}, in the CMC setting we can construct multiplicity one replacements,  containing just a 1-dimensional set of touching points with multiplicity 2. This allows to split the proof of the smooth gluing at the interface in two cases:
\begin{itemize}
\item For multiplicity one points we can refine the construction of the second replacement, showing that its approximating sequence is regular up to the interface. Using its stability and the work of the authors \cite{DDH}, we get uniform boundary curvature estimates, which allow to pass to the limit the sequence and to obtain a replacement that is smooth (and stable) across the interface. For the boundary curvature estimate we deeply rely on the surface being two-dimensional.
\item For the isolated points of multiplicity two, we can show upper density estimates adapting \cite{DDH}. The blowups are then proved to be planes with multiplicity two, exploiting the regularity in the points of multiplicity one and the maximum principle. Then by a graphicality argument we conclude that the two replacements glue smoothly. 
\end{itemize}
This allows to show that the constructed anisotropic CMC surface is smooth and locally stable away from finitely many points: the centers of the balls of the previous argument. Again due to the lack of monotonicity formula we cannot remove these singularities just using the stationarity and stability. However, refining Almgren-Pitts combinatorial lemma, by compactness we are able to remove all these isolated singularities, except one. This last singularity is the one accounting for the index of the constructed surface and cannot be removed by further exploiting Almgren-Pitts combinatorial lemma. However, we would expect that this singularity is removable, too, by means of the PDE and of the stability inequality outside of the singularity. The last step is to apply this construction to obtain a sequence of CMC surfaces $\Sigma_k$ with anisotropic mean curvature $1/k$, smooth away from at most one point $x_k$. A simple compactness theorem will guarantee the convergence of $\Sigma_k$ to an anisotropic minimal surface, smooth outside of at most one point.

\subsection*{Acknowledgments} Guido De Philippis has been partially supported by the NSF grant DMS 2055686 and by the Simons Foundation. Antonio De Rosa has been partially supported by the NSF DMS Grant No.~1906451, the NSF DMS Grant No.~2112311, and the NSF DMS CAREER Award No.~2143124.

\section{Preliminaries}\label{s:prel}
In this paper, unless otherwise specified, $M$ will denote a $3$-dimensional smooth Riemannian manifold without boundary.
\subsection{Notation}\label{ss:notat}
We first recall some classical notation: $\Inj(M)$ is the injectivity radius of $M$, $\diam (S)$ is the diameter of $S\subset M$ and $\d(S_1, S_2):=\inf_{x\in S_1, y\in S_2} \d (x,y)$  for every $S_1, S_2\subset M$.

We denote with $B_r(x)$ and $\overline{ B_r(x)}$ respectively the open and closed balls centered at $x$ of radius $r$ with respect to the metric of the Riemannian manifold $M$. $\B_r(x)$ denotes instead the ball of radius $r$ and centered in $x$ in $\RR^3$. If the center is $0$, we simply write $\B_r$.
We set 
$$\an(x,s,r):=B_r(x)\setminus \overline{B_s(x)}, \quad \aan (x,s,r):=\B_r(x)\setminus \overline{\B_s(x)}$$
 to be the open annulus centered at $x$ of radii $s$ and $r$. Furthermore we will define $$\An_r(x):=\{\an(x,s,t) \, : \, 0<s<t<r\}, \quad \mbox{and} \quad \An_\infty(x):=\{\an(x,s,t) \, : \, 0<s<t\}.$$
 Given an annulus $An$, we denote with $\partial_+ An$ the largest connected component of $\partial An$.

We denote the space of smooth (respectively compactly supported) vector fields on $M$ as $\mathcal X(M)$ (respectively $\mathcal X_c(M)$).

\subsection{Measures, rectifiable sets and varifolds}
Given a locally compact metric space $Y$, we denote by $\mathcal M_+(Y)$ the set of positive Radon measures in $Y$. For a  Borel set \(A\subset Y\),  \(\mu\res A\) is the  restriction of \(\mu\) to \(A\), i.e. the measure defined by \([\mu\res A](E)=\mu(A\cap E)\) for every Borel set $E\subset Y$. 

For every $n \in \N$, we denote by  $\haus^n$   the  $n$-dimensional Hausdorff measure.
A  set \(K\) is said to be \(2\)-rectifiable if it can be covered, 
up to an \(\H^2\)-negligible set, by countably many \(C^1\) $2$-dimensional 
submanifolds.   Given a $2$-rectifiable set $K$, we denote $T_xK$ the approximate tangent space of $K$ at $x$, which exists for $\H^2$-almost every point $x \in K$, \cite[Chapter 3]{Si}. 

For  \(\mu\in \mathcal M_+(M) \) we consider its lower and upper  \(2\)-dimensional densities at \(x\):
\[
\theta_*(x,\mu):=\liminf _{r\to 0} \frac{\mu(B_{r}(x))}{ \pi r^2}, \qquad \theta^{*}(x,\mu):=\limsup_{r\to 0} \frac{\mu(B_{r}(x))}{ \pi r^2},
\]
In case these  two limits are equal, we denote by \(\theta(x,\mu)\) their common value. Note that, if $\mu= \theta\H^2 \res K$ where \(K\) is $2$-rectifiable, then \(\theta(x)=\theta_*(x,\mu)=\theta^{*}(x,\mu)\) for \(\mu\)-a.e. \(x\), see~\cite[Chapter 3]{Si}, and $\mu$ is called a $2$-rectifiable measure.

For every open subset $\Omega \subset M$, we denote
 $$G_2(\Omega):=\{(x, S) \, : \, x\in \Omega, \, S \mbox{ is a $2$-dimensional linear subspace of } T_xM\},$$
 and for every open subset $U\subset \R^3$
 $$G_2(U):=\{(x, S) \, : \, x\in U, \, S \mbox{ is a $2$-dimensional linear subspace of } \R^3\},$$
We denote with $\cV(M):=\mathcal M_+(G_2(M))$ (resp. $\cV(U):=\mathcal M_+(G_2(U))$) the space of the $2$-varifolds on $M$ (resp. on $U$). Given a diffeomorphism $\psi \in C^1(M,M)$, we define the push-forward of $V\in\mathbb \cV(M)$ with respect to $\psi$ as the varifold $\psi_\#V\in \cV(M)$ such that
$$\int_{G_2(M)}\Phi(x,S)d(\psi_\#V)(x,S)=\int_{G_2(M)}\Phi(\psi(x),d_x\psi(S))J\psi(x,S) dV(x,S),$$
for every $\Phi\in C^0_c(G_2(M))$. Here $d_x\psi(S)$ is the image of $S$ under the linear map $d_x\psi(x)$ and $J\psi(x,S)$ denotes the $2$-Jacobian determinant  (i.e. area element) of the differential $d_x\psi$ restricted to the $2$-plane $S$, 
 see \cite[Chapter 8]{Si}.

To a varifold \( V\in \cV(M) \) (resp. $V\in\cV(U)$), we associate the measure  $\|V\|\in \mathcal M_+(M)$  (resp. $\|V\|\in \mathcal M_+(U)$) defined by 
\[
\|V\|(A)=V(G(A))\qquad\textrm{for every open \(A\subset M\) (resp. \(A\subset U\)).}
\]
We will also use the notation
\[
\theta_*(x,V)=\theta_*(x,\|V\|)\qquad\textrm{and}\qquad\theta^{*}(x,V)=\theta^{*}(x,\|V\|)
\] 
for the upper and lower densities of \(\|V\|\). In case $\theta_*(x,V)=\theta^{*}(x,V)$, we denote the common value $\theta(x,V)$ and it will be referred to as density of $V$ at $x$.

 For every $x\in M$ and $r< \Inj (M )$, we define $T_x^r : z\in \B_1\to \exp_x(rz) \in B_r(x)$, where
$\exp_x$ denotes the exponential map at the point $x$. Then we define the function  \(\eta_{x,r}(y):=(T_x^r)^{-1}(y)\). We denote
\begin{equation*}\label{blowupvarifolds}
V_{x,r}:=(\eta_{x,r})_\#(V\res B_r(x)) \in \cV(\B_1), 
\end{equation*}
and we observe that, if $\theta^{*}(x,V)<\infty$, then there exists a sequence $r_n\to 0$ and $ W\in \cV(\B_1)$ such that
$$
V_{x,r_n}\rightharpoonup W\in \cV(\B_1).
$$
We denote with \(TV(x, V)\) the space of all varifolds \(W\) obtained as above. 

A varifold $V\in \cV(M)$ is said rectifiable if there exists a $2$-rectifiable set $K$ and a function $\theta\in L^1(M;\R^+;\H^2\res K)$, such that 
\begin{equation}\label{rappresenta}
V=\theta \H^2 \res K \otimes \delta_{T_xK}.
\end{equation}

Moreover we say that a rectifiable varifold $V$ is integral if in the representation \eqref{rappresenta}, the density function $\theta$ is also integer valued.

The $weak^*$ topology on $\cV(M)$ is not metrizable. Nevertheless, in any closed ball 
$$\left\{V\in\cV (M) \, : \, \|V\|(M) \leq r \right \}$$
 of radius $r$, the $weak^*$ topology is metrizable by Banach–Alaoglu theorem: in particular, an example of metric $d$ which induces the $weak^*$ topology on $\cV$ is
 $$d(V_1,V_2) = \sup\{ V_1 (f ) - V_2 (f ) \, : \,  f\in W^{1,\infty}(G_2(M)) \cap C_c(G_2(M))\}.$$
 The balls of radius $r$ and center $V$ in this metric will be denoted by $U_r(V)$.

 \subsection{Integrands}
The anisotropic integrands  that we consider are \(C^3\) positive functions
$$
F: G_2(M) \longrightarrow \R^+:=(0,+\infty),
$$
for which there exist a positive constant \(\lambda\) such that
\begin{equation}\label{cost per area}
0 < \frac{1}{\lambda} \leq F(x,T) \leq \lambda<\infty\qquad\textrm{for all \((x,T)\in G_2(M)\).}
\end{equation}
Given a $2$-rectifiable set $K\subset M$ and $V \in \cV(M)$, we define: 
\begin{equation*}
\FF(K) := \int_{K} F(x,T_xK)\, d\H^2\res K \quad \mbox{and} \quad \FF(V) := \int_{G_2(M)} F(x,T)\, dV(x,T).
\end{equation*}
We will often identify a $2$-rectifiable set with the canonically associated density one varifold supported on $K$.
For a vector field \(X\in \mathcal X(M)\), we consider a one-parameter family of diffeomorphisms of \(M\) into itself \(\{\varphi_t\}_{t\in \R}\), such that $\frac{d\varphi_t}{dt} =X$. The {\em anisotropic first variation} is  defined as the following linear operator:  
\[
\delta_{\FF} V(X):=\frac{d}{dt}\FF\big ( (\varphi_t)_{\#}V\big)\Big|_{t=0}.
\]

We say that a varifold \(V\in \mathbb \cV(M)\) has locally bounded anisotropic first variation if $\delta_{\FF} V$ is a Radon measure on $\Omega$, i.e. if
$$|\delta_{\FF} V(X)|\leq C(K)\|X\|_{\infty}, \quad \text{ for all $X \in  \mathcal X_c(M)$}.$$

\begin{Remark}\label{duality}
We identify  the integrand \(F:G_2(M) \longrightarrow \R^+\) with a positively one homogeneous even function $G:T M\to \R^+$ via the equality
\begin{equation}\label{e:identification}
G(x, r  \nu):=|r| F(x,  \nu^{\perp}) \qquad\textrm{for all \(r \in \R\) and \((x,\nu)\in SM\)},
\end{equation}
where $SM:=\{(x,\nu)\in TM: |\nu|=1\}$.
Note that \(G\in C^3(TM\setminus \{(x,0): x\in M\})\) and that  by one-homogeneity: 
\begin{equation}\label{eulercod1}
\langle D_{\nu} G(x, \nu),\nu\rangle = G(x,\nu)\qquad\mbox{for all \((x,\nu)\in TM\), with $\nu\neq 0$.}
\end{equation}
With these identifications, if $M=\R^3$ it is a simple calculation to check that:
\begin{equation*}
\begin{split}
\delta_{\FF} V(X)&=\int_{SM} \langle D_x G(x,\nu), X(x)\rangle\, dV(x,\nu)\\
& \quad + \int_{SM}\Big(G(x,\nu)\Id-\nu \otimes D_\nu G(x,\nu)\Big):DX(x)\, dV(x,\nu), \quad \forall X\in \mathcal X(M),
\end{split}
\end{equation*}
see for instance~\cite[Section 3]{Allard84BOOK} or ~\cite[Lemma A.4]{dephilippismaggi2}. 
\end{Remark}

We will always assume that $F$ is an elliptic integrand, i.e. the map $G$ associated to $F$ satisfies (up to select a bigger $\lambda$ in \eqref{cost per area}):
\begin{equation}\label{costper}
|D_x G|,|D_\nu G|,|D_{x\nu}^2G|,|D_\nu^2G|\leq \lambda, \quad |D_\nu G|\geq \frac1\lambda \quad \mbox{and} \quad D_\nu^2 (G_{|M\times \mathbb S^2})(x,\nu)\geq \frac{\Id_{\nu^\perp}}{\lambda}.
\end{equation}
The last condition above (the uniform convexity in all but the radial directions) in particular implies the strict convexity of $G$ in all (but the radial) directions:
\begin{equation}\label{eq:sconv}
G(x,\nu)> \langle D_\nu G(x,\bar \nu),\nu\rangle\qquad \mbox{for all \((x,\bar \nu),(x,\nu)\in SM\) with  \(\nu\ne \pm \bar \nu\).}
\end{equation}
\begin{Remark}\label{rectif}
We remark that condition \eqref{eq:sconv} is necessary and sufficient to apply the rectifiability theorem the authors proved in \cite[Theorem 1.2]{DDG2}, cf. also \cite{DRK}.
\end{Remark}

\subsection{CMC surfaces}
We will use the following definitions and tools, borrowed from \cite{ZZ}. We refer to \cite[Section 2]{ZZ} for more detailed notation.

We denote with $\C(M)$ the space of finite perimeter sets in $M$, also referred to as Caccioppoli sets. Given $c>0$, we define the following energy functional:
\begin{equation*}
\Fc(\Om)=\FF(\partial\Om)-c\haus^{3}(\Om), \qquad \forall \Omega \in \C(M).
\end{equation*}
In case $\partial\Om$ is a smooth surface, the first variation of $\Om$ with respect to $\Fc$ is 
\begin{equation}
\label{1stvFc}
\de_{\Fc}\Om(X)=\int_{\partial\Om}(H_F^{\partial \Om}-c)\langle X, \nu \rangle \, d\haus^2, \qquad \forall X\in \mathcal X(M),
\end{equation}
where $\nu$ and $H_F^{\partial \Om}$ denote respectvely the outward unit normal on $\partial \Om$ and the anisotropic mean curvature of $\partial \Om$ with respect to $\nu$.

We deduce from \eqref{1stvFc} that for any critical point $\Om$ of $\Fc$,  $\partial \Om$ has constant anisotropic mean curvature $c$ with respect to the outward unit normal $\nu$. This allows us to compute the second variation $\delta^2_{\Fc}\Om(X,X)$ of a critical point $\Om$ with respect to $\Fc$ applied to a vector field $X\in \mathcal X(M)$ such that $X(x)=\varphi(x) D_\nu G(x,\nu(x)) $ on $\partial\Om$ where $\varphi\in C^\infty(\partial\Om)$, combining \cite[Section 1.5, Page 295]{Allard1983} or \cite[Lamma A.5]{dephilippismaggi2} with \cite[Remark 2.4, Proposition 2.5]{BDE}. In particular, $\partial\Om$ is $c$-stable if 
\begin{equation}\label{stabineq}
0\leq \delta^2_{\Fc}\Om(X,X)=S^{\partial \Om}_{F}(\varphi,\varphi):=\delta^2_{\FF}(\partial \Om)(X,X)-\int_{\partial \Om} Ric^M(\nu,\nu)G(x,\nu)^2\varphi^2 \, d\haus^2.
\end{equation}
Here $\delta^2_{\FF}(\partial \Om)(X,X)$ is the Euclidean second variation of $\FF$ and it is calculated in \cite[Section 1.5, Page 295]{Allard1983} (for general $F$) or in \cite[Lemma A.5]{dephilippismaggi2} (without space dependence). The second term $-\int_M Ric^M(\nu,\nu)G(x,\nu)^2\varphi^2$ comes instead from the metric of $M$, as computed in \cite[Remark 2.4, Proposition 2.5]{BDE}, where $G(x,\nu)^2\varphi^2=\langle X, \nu\rangle^2$ as a simple application of the one-homogeneity of $G$. We explicitly observe that \eqref{stabineq} does not depend on $c$.
For a more detailed study about anisotropic CMC surfaces in the Euclidean space, we refer the author to \cite{DMMN, DDG, DeRosa, DG2, DG, DKS,DRL,DRT,HTi}.

\begin{Remark}\label{metricinG}
We remark that we can absorb the metric of $M$ in the elliptic integrand $G$. Hence, up to modify $G$, we can always assume to work with the flat metric. This remark will be particularly useful in all the local arguments of this paper, where we can consequently always assume to work in $\R^3$ rather than $M$. In particular, in this way the stability inequality \eqref{stabineq} simply reads
$$
\delta^2_{\FF}(\partial \Om)(X,X)\geq 0,
$$
where $\delta^2_{\FF}(\partial \Om)(X,X)$ is the Euclidean second variation of $\FF$ \cite[Section 1.5, Page 295]{Allard1983}.
\end{Remark}

\begin{Remark}\label{palla}
We remark that in the local arguments, by Remark \ref{metricinG}, when we work in $\R^3$ condition \eqref{costper} implies that $H_F^{\partial \B_r(x)}\geq \frac{1}{\lambda r}$ for every $x\in \R^3$ and every $r>0$.
\end{Remark}

\begin{Definition}
\label{D:stable c-hypersurface}
Consider an immersed, smooth, two-sided surface $\Sigma$ with unit normal vector $\nu$, and an open set $U\subset M$. $\Sigma$ is called a {\em $c$-stable surface} in $U$ if the anisotropic mean curvature $H_F^{\Sigma}$ of $\Sigma\cap U$ with respect to $\nu$ is identically equals to $c$ and $S^\Sigma_F(\varphi, \varphi)\geq 0$  for all $\varphi\in C^\infty(\Si)$ with $\spt (\varphi)\subset \Sigma\cap U$.

If $c=0$ we simply say that $\Sigma$ is a {\em stable surface}.
\end{Definition}

\begin{Remark}\label{stablee}
Since $|Ric^M(\nu,\nu)|G(x,\nu)^2\leq C(M,F)$, for every $c$-stable surface we deduce the validity of \cite[Equation (41)]{DDH}, applying \cite[Lemma 2.1]{Allard1983} or \cite[Lemma A.5]{dephilippismaggi2}.
\end{Remark}

\begin{Definition}
Consider an open subset $U\subset M$, and a smooth $2$-dimensional surface $\Sigma$. A smooth immersion $\psi: \Si\rightarrow U$ is an {\em almost embedding} if for every $p\in\phi(\Si)$ where $\Si$ fails to be embedded, there exists a neighborhood $Q\subset U$ of $p$, such that 
\begin{itemize}
\item $\Si\cap \psi^{-1}(Q)=\cup_{i=1}^n \Si_i$, where $\Si_i$ are disjoint connected components;
\item $\psi(\Si_i)$ is an embedding for every $i=1,\dots,n$; 
\item for every $i$, every $\psi(\Si_j)$, $j\neq i$, lies on one side of $\psi(\Si_i)$ in $Q$.
\end{itemize}
We identify $\psi(\Si)$ with $\Si$ and $\phi(\Si_i)$ with $\Si_i$. We define the touching set $\mS(\Si)$ as the set of points of $\Si$ where $\Si$ fails to be embedded. We define the regular set as $\mR(\Si):=\Si\backslash\mS(\Si)$.
\end{Definition}

\begin{Theorem}
\label{T:curvat}
Consider an open subset $U\subset M$. Let $\Si\subset U$ be a $c$-stable surface in $U$ such that $\partial \Si\cap U=\emptyset$, and $\H^2(\Si)\leq D$, then there exists $C>0$ depending only on $M, c, D, F$, such that
\begin{equation}\label{curvest}
|A^\Si|^2 (x) \leq \frac{C}{\d(x,\partial U)^2} \quad \text{ for all $x \in \Si$}, 
\end{equation}
where $A^\Si(x)$ denotes the second fundamental form of $\Sigma$ at $x$.

Moreover if $\Si_k\subset U$ is a sequence of $c$-stable surfaces in $U$ such that $\partial \Si_k\cap U=\emptyset$ and $\sup_{k} \H^2(\Sigma_k) < \infty$, 
then up to a subsequence, $\Sigma_k$ converges locally smoothly to a $c$-stable surface in $U$. 
\end{Theorem}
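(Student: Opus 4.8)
\textbf{Proof strategy for Theorem \ref{T:curvat}.}

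The plan is to derive the curvature estimate \eqref{curvest} from the stability inequality \eqref{stabineq} via a standard contradiction-and-blow-up argument, and then deduce the compactness statement from \eqref{curvest} together with the interior estimates for the relevant elliptic PDE. Since the estimate and the conclusion are purely local, by Remark \ref{metricinG} we may work in $\R^3$ with the flat metric, the metric of $M$ having been absorbed into $G$; in particular, by Remark \ref{stablee}, every $c$-stable surface satisfies \cite[Equation (41)]{DDH}, which is precisely the anisotropic stability inequality in the form needed to invoke the a priori estimates of \cite{DDH}. The first step is therefore to record that, for a $c$-stable $\Sigma\subset U$ with $\partial\Sigma\cap U=\emptyset$ and $\H^2(\Sigma)\le D$, the hypotheses of the relevant theorems in \cite{DDH} are met: the surface is two-dimensional (this is used crucially, as emphasized in the introduction), its anisotropic mean curvature is the constant $c$, it is stable in the anisotropic sense, and its area is bounded by $D$. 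The curvature bound \eqref{curvest} with a scale-invariant right-hand side $C/\d(x,\partial U)^2$ is then exactly the content of the interior curvature estimate for stable anisotropic CMC surfaces proved there; the dependence of $C$ on $M,c,D,F$ is inherited from the dependence of the constants in \cite{DDH} on $\lambda$ (through \eqref{costper}), on $c$, and on the area bound $D$, together with the $C^3$ norm of $G$ which encodes both $F$ and the metric of $M$.

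For the compactness statement, I would argue as follows. Let $\Sigma_k\subset U$ be $c$-stable with $\partial\Sigma_k\cap U=\emptyset$ and $\sup_k\H^2(\Sigma_k)=:D<\infty$. Applying \eqref{curvest} with $D$ as the common area bound gives, for every $U'\cc U$, a uniform bound $|A^{\Sigma_k}|^2\le C(U')$ on $\Sigma_k\cap U'$, independent of $k$. Combined with the area bound, this yields local uniform control on the surfaces: by the usual compactness for surfaces with bounded second fundamental form and bounded area (see e.g.\ the discussion in \cite{DDH} or the argument in \cite{ZZ} in the isotropic case), after passing to a subsequence $\Sigma_k$ converges, in the local Hausdorff sense and with finite multiplicity, to a smooth limit surface $\Sigma_\infty$ in $U$, and the convergence is locally smooth on the regular part. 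The anisotropic mean curvature being a continuous function of the $2$-jet of the surface, the limit $\Sigma_\infty$ again has constant anisotropic mean curvature $c$; and the stability inequality \eqref{stabineq}, being an inequality of the form $S^{\Sigma_k}_F(\varphi,\varphi)\ge0$ that passes to the limit under $C^2$ convergence against fixed test functions $\varphi$, persists for $\Sigma_\infty$, so that $\Sigma_\infty$ is $c$-stable in $U$. One should also note that no multiplicity higher than one can be created in the limit along a set with nonempty interior, since the curvature estimate prevents sheets from collapsing; the only subtlety, handled as in \cite{ZZ,DDH}, is to rule out that the limit is empty, which follows if one normalizes so that the $\Sigma_k$ pass through a fixed compact region, or else is part of the statement as phrased (the limit may a priori be empty, in which case the conclusion is vacuous on that component).

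The main obstacle is the first step: establishing the scale-invariant curvature estimate \eqref{curvest} in the anisotropic setting. Unlike the isotropic case, there is no monotonicity formula available (cf.\ \cite{Allardratio}), so one cannot run the classical Schoen–Simon–Yau type argument directly; instead one must rely on the two-dimensionality of $\Sigma$ and on the analysis carried out by the authors in \cite{DDH}, where the estimate is obtained by combining the stability inequality with a Simons-type identity for the anisotropic second fundamental form and a Choi–Schoen type iteration. Once \eqref{curvest} is in hand, the compactness assertion is routine. I would therefore present the proof as: (i) reduce to $\R^3$ via Remark \ref{metricinG}; (ii) verify the hypotheses and quote the curvature estimate of \cite{DDH} to obtain \eqref{curvest}; (iii) deduce local uniform $C^2$ bounds for a sequence, extract a locally smoothly convergent subsequence, and check that constancy of the anisotropic mean curvature and the stability inequality pass to the limit.
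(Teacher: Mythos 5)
Your outline of the compactness step matches the paper's (both reduce it to the curvature estimate and the argument of \cite[Theorem 2.6]{ZZ}), but the first and decisive step — the curvature estimate \eqref{curvest} itself — is not correctly sourced, and the actual difficulty is not addressed. You attribute the interior curvature estimate for stable anisotropic CMC surfaces to \cite{DDH} and describe it as coming from ``a Simons-type identity for the anisotropic second fundamental form and a Choi--Schoen type iteration.'' No such estimate is proved in \cite{DDH}: that paper concerns area blow-up sets, and what the present paper extracts from it (via Remark \ref{stablee}) is only the stability inequality in the form of \cite[Equation (41)]{DDH}. The curvature estimate the authors actually invoke is Allard's \cite{Allard1983} (cf.\ also \cite{White1987}), whose hypotheses include an \emph{upper density bound} $\H^2(\Sigma\cap B_r(x))\le C r^2$. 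In the isotropic setting this bound is free from the monotonicity formula; here it is precisely the point where the lack of monotonicity bites, and your proposal never produces it.

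The paper closes this gap as follows: from the stability inequality \cite[Equation (41)]{DDH} one obtains, as in the proof of \cite[Lemma 4.3]{DDH}, a local $L^2$ bound on the second fundamental form (hence on the mean curvature $\int|H|^2$), and then \cite[Lemma 3.5]{BT} — density estimates for compact surfaces with total boundary curvature less than $4\pi$ — yields the upper density bound. This is also where the two-dimensionality of $\Sigma$ enters in an essential way (the Gauss--Bonnet/total-curvature mechanism behind \cite{BT} is specific to surfaces), not through a Simons identity. Without supplying this density estimate, your step (ii) does not go through: you cannot ``quote the curvature estimate'' because the reference you quote does not contain it, and the estimate you would need to quote (Allard's) has an unverified hypothesis. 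The remainder of your argument (passage to the limit of the constant anisotropic mean curvature and of the stability inequality) is fine once \eqref{curvest} is established.
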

\begin{proof}
From Remark \ref{stablee} we deduce that $c$-stable surfaces satisfy all the assumptions in \cite{Allard1983} to get the curvature estimate \eqref{curvest}, provided they enjoy upper density estimates, cf. also \cite{White1987}. The upper density estimates for $c$-stable surfaces can be obtained again from Remark \ref{stablee}, i.e. the validity of \cite[Equation (41)]{DDH}, applying \cite[Lemma 3.5]{BT}, compare with the proof of \cite[Lemma 4.3]{DDH} for the estimate of the $L^2$-norm of the isotropic mean curvature $\int |H|^2$. We deduce that \eqref{curvest} holds.
The compactness statement can be easily deduced from the curvature estimates, see \cite[Theorem 2.6]{ZZ}. 
\end{proof}

\begin{Theorem}
\label{T:compact}
Given a sequence of almost embedded, $c_k$-stable surfaces $\Si_k\subset U$, such that $\sup_{k} \H^2(\Sigma_k) < \infty$ and $\sup_k c_k <\infty$. Then the following hold:
\begin{itemize}
\item[(i)] if $\inf c_k>0$, then $\{\Sigma_k\}$ converges locally smoothly to an almost embedded $c$-stable surface $\Sigma$ in $U$ (for some $c>0$), after possibly passing to a subsequence; moreover if $\{\Si_k\}$ are all boundaries, then the density of $\Sigma$ is $1$ on $\mR(\Si)$ and $2$ on $\mS(\Si)$, and $\Sigma$ is a boundary as well;
\item[(ii)] if $c_k\to 0$, then $\{\Sigma_k\}$ converges locally smoothly with integer multiplicity to a smooth embedded stable surface $\Sigma$ in $U$, after possibly passing to a subsequence.
\end{itemize}
\end{Theorem}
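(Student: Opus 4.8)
\emph{Proof proposal.} The plan is to deduce everything from the \emph{a priori} estimates of Theorem~\ref{T:curvat}, combined with a strong maximum principle analysis of the limit, following \cite[Section~2]{ZZ}. First I would pass to a subsequence so that $c_k\to c\ge 0$, with $c>0$ in case (i) and $c=0$ in case (ii). Since the stability inequality \eqref{stabineq} is inherited by each embedded sheet of an almost embedded $c_k$-stable surface, every such sheet is itself $c_k$-stable, so Theorem~\ref{T:curvat} provides, for each $V\Subset U$, a uniform bound $\sup_{\Sigma_k\cap V}|A^{\Sigma_k}|^2\le C(M,F,\sup_k c_k,D,\dist(V,\partial U))$ on each sheet. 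Together with $\sup_k\H^2(\Sigma_k)\le D$ this bounds, uniformly in $k$, the number of sheets through any point of $V$. Writing each sheet locally as a graph over its tangent plane, the curvature bound gives uniform local $C^{1,1}$ bounds, and bootstrapping these in the uniformly elliptic quasilinear CMC equation $H_F^{\Sigma_k}=c_k$ (Schauder estimates, using the regularity of $F$ and $M$) upgrades them to uniform local $C^{2,\alpha}$ bounds. Arzel\`a--Ascoli and a diagonal argument over an exhaustion of $U$ then produce a subsequence along which $\Sigma_k\to\Sigma$ locally smoothly, sheet by sheet, $\Sigma\subset U$ being a finite union of smooth sheets; passing $H_F^{\Sigma_k}=c_k$ and \eqref{stabineq} to the limit shows $\Sigma$ has anisotropic mean curvature $c$ and is $c$-stable in $U$.

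Next I would analyse $\Sigma$ at its non-embedded points. If two distinct sheets $\Sigma^{(1)},\Sigma^{(2)}$ of $\Sigma$ meet at $p$, they must be tangent there (otherwise the corresponding sheets of $\Sigma_k$ would cross, contradicting almost-embeddedness), so near $p$ they are graphs $u_1,u_2$ over the common tangent plane with $u_1(p)=u_2(p)$, $Du_1(p)=Du_2(p)$; moreover $w:=u_1-u_2$ has constant sign ($\Sigma^{(1)}$ lying on one side of $\Sigma^{(2)}$) and solves a linear elliptic equation $Lw=c(\varepsilon_1-\varepsilon_2)$ with no zeroth-order term, $\varepsilon_i\in\{\pm1\}$ recording the orientation of the chosen unit normal on $\Sigma^{(i)}$ relative to the graph. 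If $\varepsilon_1=\varepsilon_2$ (in particular whenever $c=0$), then $Lw=0$ and $w$ attains an interior extremum of definite sign at $p$, so $w\equiv 0$ by the strong maximum principle, contradicting that the sheets are distinct; hence distinct touching sheets have $\varepsilon_1=-\varepsilon_2$, which is possible only if $c\ne0$. Thus in case (ii) the limit is embedded. In case (i) one has $Lw=\pm2c\ne0$, so $\{w=0\}$ is $\H^2$-negligible (a nontrivial solution of a non-homogeneous linear elliptic equation cannot vanish together with its gradient and Hessian on a positive-measure set), hence $\mS(\Sigma)$ is $\H^2$-null and $\Sigma$ is almost embedded.

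It remains, in case (i) with $\Sigma_k=\partial\Omega_k$, to pin down the multiplicity and recover the boundary structure. If $m\ge2$ sheets of $\Sigma_k$ converged to a single limit sheet, then, since across each sheet of a reduced boundary the two adjacent phases are distinct, consecutive such sheets would carry opposite outward normals, so the limit sheet would have anisotropic mean curvature simultaneously $c$ and $-c$ with respect to the same normal — impossible because $c\ne0$. Hence every limit sheet has multiplicity one, so $\theta(\cdot,\Sigma)=1$ on $\mR(\Sigma)$ and $\theta(\cdot,\Sigma)=2$ on $\mS(\Sigma)$. Finally $\Per(\Omega_k)\le\H^2(\Sigma_k)\le D$, so by compactness in $BV$ one may assume $\Omega_k\to\Omega$ in $L^1_{\mathrm{loc}}(U)$ with $\Omega$ of finite perimeter and $\partial^*\Omega\subset\supp\|\Sigma\|$; on $\mR(\Sigma)$ the convergence is smooth with multiplicity one, so $\Omega$ lies locally on exactly one side and $\mR(\Sigma)=\partial^*\Omega$ up to $\H^2$-null sets. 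Since $\mS(\Sigma)$ is $\H^2$-null, this identifies $\Sigma$ with the varifold boundary of $\Omega$ (the density-$2$ set $\mS(\Sigma)$ being exactly the pinch locus approached by $\Omega$ from both sides), so $\Sigma$ is a boundary. In case (ii) the same smooth convergence yields a smooth embedded stable minimal surface, the locally constant number of approximating sheets over each limit sheet being its integer multiplicity.

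The main obstacle is the analysis of the limit at its touching points: in the absence of a monotonicity formula, the absence of higher-multiplicity convergence, the $\H^2$-negligibility of the touching set, and the boundary property must all be extracted directly from the quasilinear CMC equation via the strong maximum principle, crucially exploiting $c\ne0$. This is precisely the mechanism separating (i) from (ii), and the reason the non-zero CMC case is more rigid — and hence more tractable — than the minimal one.
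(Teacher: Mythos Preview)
Your proposal is correct and follows essentially the same approach as the paper, which simply defers to \cite[Theorem~2.11]{ZZ} with Theorem~\ref{T:curvat} substituted for the isotropic curvature estimate \cite[Theorem~2.6]{ZZ}; you have written out the content of that deferred argument (curvature bounds $\Rightarrow$ local graphical $C^{2,\alpha}$ compactness, maximum-principle analysis of touching sheets via the sign of the right-hand side $c(\varepsilon_1-\varepsilon_2)$, and the orientation/multiplicity argument for boundaries when $c\ne 0$). One small tightening: your claim that $\{w=0\}$ is $\H^2$-null is cleaner if phrased as in Lemma~\ref{stimatouchingset}---at a touching point $w=0$, $Dw=0$, and $Lw=\pm 2c$ forces $D^2w\ne 0$, so the implicit function theorem gives local $\H^1$-finiteness of the touching set---rather than by appeal to a general unique-continuation principle.
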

\begin{proof}
The proof is obtained repeating verbatim the proof of \cite[Theorem 2.11]{ZZ}, replacing the use of \cite[Theorem 2.6]{ZZ} with Theorem \ref{T:curvat}.
\end{proof}

\subsection{Regularity for minimizers of $\Fc$}
\begin{Theorem}
\label{T:regularity of Ac minimizers}
Let $\Om\in\C(M)$, $p\in\spt\|\partial\Om\|$, and $r>0$, such that $\Om$ minimizes the $\Fc$-functional in $B_r(p)$: that is, for every $W\in\C(M)$ with $(W\setminus \Om)\cup(\Om\setminus W)\subset B_r(p)$, we have $\Fc(W)\geq \Fc(\Om)$. Then $\partial\Om\res B_r(p)$ is a smooth, embedded surface.
\end{Theorem}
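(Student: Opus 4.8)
The plan is to reduce the statement to the classical interior regularity theory for anisotropic (almost)minimizers of elliptic parametric integrands. First I would reduce to the Euclidean setting: by Remark \ref{metricinG} we may absorb the metric of $M$ into the integrand, so it suffices to work in a ball $\B_r\subset\R^3$ with a $C^3$ elliptic integrand $F$ (equivalently the one-homogeneous extension $G$ satisfying \eqref{costper}), and with $\partial\Om$ identified with the reduced boundary $\partial^*\Om$ carrying its measure-theoretic outer normal. The key point is that the volume term $-c\,\haus^3(\Om)$ is a lower-order perturbation: if $W$ agrees with $\Om$ outside $\B_\rho(q)\subset\subset\B_r(p)$, then $|\haus^3(W)-\haus^3(\Om)|\le \haus^3(\B_\rho)=\tfrac{4}{3}\pi\rho^3$, so from $\Fc(W)\ge\Fc(\Om)$ we get
\begin{equation*}
\FF(\partial^*\Om\res\B_\rho(q))\le \FF(\partial^*W\res\B_\rho(q))+\tfrac{4}{3}\pi |c|\,\rho^3 .
\end{equation*}
Since $\FF$ is comparable to perimeter by \eqref{cost per area}, this says precisely that $\partial\Om$ is an $(\Lambda,r_0)$-minimizer (an almost-minimizer, in the sense of Almgren–Bombieri) of the anisotropic functional $\FF$ in $\B_r(p)$, with gauge $\rho\mapsto C\rho$.

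Next I would invoke the regularity theory for such almost-minimizers. Because $G$ is $C^3$ and uniformly elliptic, the $\eps$-regularity / excess-decay theorem for anisotropic almost-minimizers of parametric integrands applies (this is the content, in codimension one, of the work of Almgren, Schoen–Simon–Almgren, Bombieri, Duzaar–Steffen, and in the present generality of Allard \cite{Allard1983} together with the De Giorgi-type estimates): it yields an open dense set of $\partial^*\Om\cap\B_r(p)$ on which $\partial\Om$ is a $C^{1,\alpha}$ embedded hypersurface, with $\H^2$-null singular set. In ambient dimension $3$ the singular set is in fact empty by the standard dimension-reduction argument: an anisotropic area-minimizing cone in $\R^2$ is a line, so there are no singular anisotropic minimizing cones in $\R^3$, whence $\partial\Om\res\B_r(p)$ is everywhere a $C^{1,\alpha}$ embedded surface. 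Finally, since $\partial\Om$ is $C^{1,\alpha}$ it can be written locally as a graph solving the anisotropic prescribed-mean-curvature equation $H_F^{\partial\Om}=c$ (the Euler–Lagrange equation of $\Fc$, cf.\ \eqref{1stvFc}); this is a quasilinear uniformly elliptic PDE with $C^2$ coefficients (as $G\in C^3$), so Schauder theory bootstraps the graph to $C^{3,\alpha}$, hence $\partial\Om\res\B_r(p)$ is a smooth (in the sense fixed in the introduction, $C^2$) embedded surface.

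The main obstacle is the passage from almost-minimality to $C^{1,\alpha}$ regularity, i.e.\ the $\eps$-regularity theorem for anisotropic integrands and the exclusion of singularities: unlike the isotropic (perimeter) case there is no monotonicity formula, and the De Giorgi excess-decay must be run using only the uniform convexity \eqref{costper} of $G$ and a Caccioppoli-type inequality. I would cite \cite{Allard1983} (which proves exactly the curvature estimates and attendant regularity for stationary points, and whose arguments cover almost-minimizers) for this step, together with the observation that the lower-order volume term does not affect the excess-decay iteration because its contribution is $O(\rho^3)=o(\rho^2)$ at the relevant scales; the dimension-reduction step excluding singular cones is then elementary in $\R^3$. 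All other steps—the reduction to $\R^3$, the almost-minimality estimate, and the final Schauder bootstrap—are routine.
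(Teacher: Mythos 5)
Your proposal is correct and follows essentially the same route as the paper: the paper likewise observes that the volume term is controlled by $c\,\haus^3((W\setminus\Om)\cup(\Om\setminus W))$, so that $\Om$ is an almost-minimizer of $\FF$ in the sense of \cite[Definition 1.8]{DM}, and then concludes by citing \cite[Theorem 1.2, Remark 1.3]{DM}, which packages exactly the $\eps$-regularity, the emptiness of the singular set in ambient dimension $3$, and the Schauder bootstrap that you spell out.
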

\begin{proof}
Since for every $U\in\C(M)$ with $(U\setminus \Om)\cup(\Om\setminus U)\subset \tilde B_r(p)$ we have $\Fc(U)\geq \Fc(\Om)$, then
\[ \FF(\partial U)-\FF(\partial\Om)\geq -c|\H^{3}(U)-\H^{3}(\Om)|\geq  -c\H^{3}((U\setminus \Om)\cup(\Om\setminus U)). \]
This is \cite[Condition (1.12) in Definition 1.8]{DM}. We deduce the claimed regularity applying \cite[Theorem 1.2, Remark 1.3]{DM}.
\end{proof}

\subsection{Sweepouts}
We recall the following notions of generalized smooth families and sweepouts, cf. \cite[Definition 0.2]{DLTas}:
\begin{Definition}\label{d:sweep}
Consider a family $\{\Sigma_t\}_{t\in [0,1]^k}$ of closed subsets of $M$.
We say that $\{\Sigma_t\}_{t\in [0,1]^k}$ is a {\em generalized smooth family} if the following properties hold
\begin{itemize}
\item $\cH^2(\Sigma_t)<\infty$ for every $t$;
\item For every $t$ there exists a finite $P_t\subset M$ such that $\Sigma_t$ is a smooth surface in $M\setminus P_t$;
\item $\FF (\Sigma_t)$ depends continuously on $t$ and  if \(t \to t_0\), $\sup_{x \in \Sigma_t} \d(x,\Sigma_{t_0}) \to 0$;
\item $\Sigma_t \longrightarrow \Sigma_{t_0}$ in $C^2$-norm as $t\rightarrow t_0$, in any $U\subset\subset M\setminus P_{t_0}$. 
\end{itemize}
A family $\{\Omega_t\}_{t\in [0,1]}$ 
of open finite perimeter sets is a {\em sweepout} of
$M$ if $\{\partial \Om_t\}_{t\in [0,1]}$ is a generalized smooth family and
\begin{itemize}
\item[(so1)] $\Omega_0=\emptyset$ and $\Omega_1 = M$; 
\item[(so2)] ${\haus^3} (\Omega_t\Delta \Omega_{t_0})
 \to 0$ 
as $t\to t_0$.
\end{itemize}
\end{Definition}

We recall the existence of sweepouts as in Definition \ref{d:sweep}, stated in \cite[Proposition 0.4]{DLTas}:
\begin{Proposition}[{see \cite[Proposition 0.4]{DLTas}}]\label{Mor}
Given any smooth Morse function $g: M \to [0,1]$, then $\{\{g\leq t\}\}_{t\in [0,1]}$ is a sweepout.
\end{Proposition}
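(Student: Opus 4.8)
The statement to prove is Proposition \ref{Mor}: given a smooth Morse function $g\colon M\to[0,1]$, the family $\{\{g\le t\}\}_{t\in[0,1]}$ is a sweepout in the sense of Definition \ref{d:sweep}. The plan is to verify each bullet of the definition directly, exploiting the structure theory of Morse functions.

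\textbf{Setup and the easy properties.} First I would observe that since $M$ is a closed $3$-manifold and $g$ is Morse, the critical set $\{p_1,\dots,p_N\}$ is finite, and one may normalize so that the critical values $c_1<\dots<c_m$ are distinct (or at least handle the case of coincident critical values by a small perturbation, or simply by noting the argument localizes). For each $t$, the set $\Omega_t:=\{g<t\}$ (or $\{g\le t\}$, which differs by a null set in perimeter) is open with smooth boundary $\partial\Omega_t=\{g=t\}$ whenever $t$ is a regular value, by the implicit function theorem; this is a smooth compact surface of finite $\mathcal H^2$-measure. When $t$ is a critical value, $\{g=t\}$ is a smooth surface away from the finitely many critical points lying on that level, so setting $P_t$ to be that finite set (and $P_t=\emptyset$ for regular $t$) gives the required finite singular set. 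Properties (so1) hold since $g$ maps onto $[0,1]$ with $\{g<0\}=\emptyset$ and $\{g\le 1\}=M$ (after normalizing $\min g=0$, $\max g=1$). Property (so2), continuity of $\mathcal H^3(\Omega_t)$ and $\mathcal H^3(\Omega_t\Delta\Omega_{t_0})\to0$, follows from $\mathcal H^3(\Omega_t\Delta\Omega_{t_0})=\mathcal H^3(\{g \text{ between } t \text{ and } t_0\})$ together with dominated convergence, using that $\mathcal H^3(\{g=t_0\})=0$ (the level set is $2$-dimensional).

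\textbf{The generalized smooth family properties.} For the continuity of $\FF(\partial\Omega_t)=\int_{\{g=t\}}F(x,T_x\{g=t\})\,d\mathcal H^2$ I would use the coarea formula: for $a<b$ with no critical values in $(a,b)$, the region $\{a<g<b\}$ is foliated by the smooth level sets, and the gradient flow of $g/|\nabla g|^2$ gives a diffeomorphism $\{g=a\}\times(a,b)\to\{a<g<b\}$ under which $\{g=t\}$ varies smoothly in $t$; hence $t\mapsto\FF(\partial\Omega_t)$ is continuous (indeed smooth) on each interval of regular values. At a critical value $t_0=c_j$, continuity of $\FF$ must be checked by hand: the level sets $\{g=t\}$ for $t$ near $c_j$ differ from $\{g=c_j\}$ only near the critical points on that level, where in Morse coordinates $g=c_j\pm x_1^2\pm x_2^2\pm x_3^2$ the local level sets are model quadrics whose $\mathcal H^2$-area (hence $\FF$, which is comparable to area by \eqref{cost per area}) tends to the area of the singular model level set as $t\to c_j$; a direct computation in these normal-form coordinates, for each of the index $0,1,2,3$ cases, shows the area contribution from a small ball around each critical point is continuous in $t$. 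The Hausdorff one-sided convergence $\sup_{x\in\Sigma_t}\d(x,\Sigma_{t_0})\to0$ follows because $\{g=t\}\subset\{|g-t_0|\le|t-t_0|\}$, and for $|t-t_0|$ small this set is contained in a small tube around $\{g=t_0\}$ (again, away from critical points this is the flow tube; near critical points it is a small coordinate ball). Finally, $C^2$-convergence $\Sigma_t\to\Sigma_{t_0}$ in any $U\subset\subset M\setminus P_{t_0}$: on such $U$, $\nabla g\ne0$, so the level sets $\{g=t\}\cap U$ are graphs (in suitable local coordinates adapted to $\nabla g$) of functions depending smoothly on $t$ via the implicit function theorem, and smooth dependence gives $C^2$ (indeed $C^\infty$) convergence as $t\to t_0$.

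\textbf{Main obstacle.} The routine parts are the regular-value behavior and (so1)–(so2); the one genuinely delicate point is the continuity of $\FF(\partial\Omega_t)$ as $t$ crosses a critical value, i.e.\ controlling the $\mathcal H^2$-measure of the level sets near a nondegenerate critical point. I expect this to be the crux, and the cleanest route is to pass to Morse normal coordinates, reduce to the quadric models $\{\pm x_1^2\pm x_2^2\pm x_3^2=\varepsilon\}\cap\B_\rho$, and show that these have area converging to that of $\{\pm x_1^2\pm x_2^2\pm x_3^2=0\}\cap\B_\rho$ as $\varepsilon\to0$ — a one-variable limit that can be done by explicit parametrization or by the coarea formula applied to the quadratic form. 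Since $\tfrac1\lambda\le F\le\lambda$, $\FF$ is squeezed between $\lambda^{-1}$ and $\lambda$ times $\mathcal H^2$ on the relevant pieces, and combined with the smooth convergence away from $P_{t_0}$ this yields $\FF(\partial\Omega_t)\to\FF(\partial\Omega_{t_0})$; a slightly more careful argument, using also the $C^0$-convergence of the tangent planes (which holds in the quadric models except on a shrinking neighborhood of the origin, whose area contribution is $o(1)$), gives the full continuity of $\FF$ including the anisotropic weight. Alternatively, one can invoke the standard fact that the sublevel sets of a Morse function change by attaching handles, so $\partial\Omega_t$ undergoes surgery at a critical value; but for the purposes of this Proposition only the measure-theoretic/$C^2$ statements are needed, and I would keep the proof at that level rather than invoking handle decompositions. (This is essentially \cite[Proposition 0.4]{DLTas}, whose proof can be cited verbatim.)
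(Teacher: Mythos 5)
Your proof is correct and follows exactly the route the paper intends: the paper gives no argument of its own for this proposition, simply citing \cite[Proposition 0.4]{DLTas}, whose proof is precisely the direct verification you carry out (regular values via the gradient flow/implicit function theorem, critical values via the Morse normal form and the area continuity of the model quadrics, plus the squeeze $\lambda^{-1}\H^2\le\FF\le\lambda\H^2$ together with convergence of tangent planes off a shrinking neighborhood of the critical points to upgrade area continuity to continuity of $\FF$). No gaps.
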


\section{Min-max construction}
For every $c> 0$ and every sweepout $\{\Om_t\}$ we define
\begin{equation*}
 \cF(\{\Om_t\})\;:=\;\max_{t\in[0,1]} \Fc(\Om_t).
\end{equation*}
One can prove a uniform lower bound for $\cF$ on the sweepouts:

\begin{Proposition}\label{p:isop} There exists $C(M,F,c)>0$ such that $\cF (\{\Om_t\}) \geq C (M,F,c)$ for every sweepout $\{\Om_t\}$. 
\end{Proposition}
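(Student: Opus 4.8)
The plan is to obtain the uniform lower bound as a consequence of an isoperimetric-type inequality combined with the observation that every sweepout must pass through a "half-volume" slice. First I would use the continuity property (so2): since $t\mapsto \haus^3(\Om_t)$ is continuous with $\haus^3(\Om_0)=0$ and $\haus^3(\Om_1)=\haus^3(M)$, there exists $t_*\in[0,1]$ such that $\haus^3(\Om_{t_*})=\tfrac12\haus^3(M)$. At this slice $\partial\Om_{t_*}$ separates $M$ into two pieces of equal volume, so it cannot be empty; more quantitatively, I would invoke the isoperimetric inequality on the closed manifold $M$ (or equivalently a Poincaré/Sobolev inequality for $BV$ functions on $M$), which gives a constant $c_0(M)>0$ with
\begin{equation*}
\haus^2(\partial \Om_{t_*})\;\geq\; \Per(\Om_{t_*};M)\;\geq\; c_0(M)\,\min\{\haus^3(\Om_{t_*}),\haus^3(M\setminus\Om_{t_*})\}^{2/3}\;=\;c_0(M)\Big(\tfrac12\haus^3(M)\Big)^{2/3}.
\end{equation*}
Then, using the ellipticity bound \eqref{cost per area}, $\FF(\partial\Om_{t_*})\geq \tfrac1\lambda\haus^2(\partial\Om_{t_*})$, so
\begin{equation*}
\Fc(\Om_{t_*})\;=\;\FF(\partial\Om_{t_*})-c\,\haus^3(\Om_{t_*})\;\geq\;\frac{c_0(M)}{\lambda}\Big(\tfrac12\haus^3(M)\Big)^{2/3}-\frac{c}{2}\haus^3(M).
\end{equation*}
Since $\cF(\{\Om_t\})\geq \Fc(\Om_{t_*})$, this already produces a bound depending only on $M,F,c$, but it may be negative if $c$ is large.

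To fix the sign and get a genuinely positive constant, I would instead run the argument at a slice of small volume. Pick $v\in(0,\haus^3(M))$ to be chosen, and by continuity choose $t_v$ with $\haus^3(\Om_{t_v})=v$. The same isoperimetric inequality gives $\FF(\partial\Om_{t_v})\geq \tfrac{c_0(M)}{\lambda}v^{2/3}$, hence
\begin{equation*}
\cF(\{\Om_t\})\;\geq\;\Fc(\Om_{t_v})\;\geq\;\frac{c_0(M)}{\lambda}v^{2/3}-cv.
\end{equation*}
The function $v\mapsto \tfrac{c_0(M)}{\lambda}v^{2/3}-cv$ is strictly positive for $v$ small and attains a positive maximum at $v_0=\big(\tfrac{2c_0(M)}{3\lambda c}\big)^3$ (provided $v_0<\haus^3(M)$; if not, take $v$ a fixed fraction of $\haus^3(M)$ and the earlier estimate with the half-volume slice suffices after noting $c$ is then bounded in terms of $M$). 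Taking $C(M,F,c)$ to be this maximal value (or any positive number below it) yields the claim for every sweepout simultaneously, since the choice of $v_0$ did not depend on $\{\Om_t\}$.

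The only genuine input here is the isoperimetric inequality on the closed Riemannian manifold $M$ for sets of small volume, which is classical (it follows, e.g., from the Euclidean isoperimetric inequality via a covering by normal coordinate charts, with a constant depending only on the $C^0$-geometry of $M$). Everything else — existence of the intermediate slice, the ellipticity lower bound on $\FF$, and the elementary optimization in $v$ — is routine. The main point to be careful about is that the constant extracted is uniform over all sweepouts: this is automatic because $v_0$, $c_0(M)$ and $\lambda$ depend only on $M$, $F$, $c$, never on the particular family $\{\Om_t\}$. One should also note the slight subtlety that $\partial\Om_t$ in Definition \ref{d:sweep} is the topological/measure-theoretic boundary, so $\Per(\Om_t;M)\le \haus^2(\partial\Om_t)$ and the perimeter-form of the isoperimetric inequality is the correct one to invoke; the generalized smooth family structure guarantees the slices are finite-perimeter sets with $\haus^2(\partial\Om_t)<\infty$, so all quantities are finite.
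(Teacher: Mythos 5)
Your argument is correct and is essentially the same as the paper's: both find an intermediate slice of prescribed small volume $V$ via continuity, apply the isoperimetric inequality on the closed manifold $M$ (the paper cites \cite[Theorem 2.15]{ZZ}) together with the ellipticity bound $\FF\geq \haus^2/\lambda$, and then choose $V$ comparable to $(C_0/(c\lambda))^3$ (capped by the admissible volume threshold) so that $\frac{C_0}{\lambda}V^{2/3}-cV$ is a positive constant depending only on $M,F,c$. The only difference is cosmetic — you optimize exactly in $v$ while the paper just picks $V=\min\{(C_0/(2c\lambda))^3,V_0\}$ — so no further comment is needed.
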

\begin{proof} Since $\{\Omega_t\}$ satisfies the properties of Definition \ref{d:sweep}, for every $V\in [0,{\haus^3}(M)]$ there is $t_0\in [0,1]$ such that ${\haus^3} (\Omega_{t_0})
= V$. By \cite[Theorem 2.15]{ZZ}, choosing $V\in [0,V_0]$, we compute 
$$\FF(\partial \Om_{t_0})\;\geq\; \frac{\cH^2 (\partial \Om_{t_0})}{\lambda}\;\geq\;  \frac{C_0 V^{\frac{2}{3}}}{\lambda},$$ 
where $C_0$ is the isoperimetric constant of $M$ in \cite[Theorem 2.15]{ZZ}. We can choose $V\in  [0,{\haus^3}(M)]$ such that $V= \min\{(\frac {C_0}{2c\lambda})^{3},V_0\}$, we deduce that
$$\Fc(\Om_{t_0})=\FF(\partial \Om_{t_0})-cV\geq \frac{C_0 V^{\frac{2}{3}}}{\lambda}-cV\geq cV= c \min\left \{\left(\frac {C_0}{2c\lambda}\right)^{3},V_0\right \}=:C(M,F,c),$$ 
where $C(M,F,c)$ depends just on $M$, $F$ and $c$.
\end{proof}

For every family $\mathscr L$ of sweepouts, we set
\begin{equation*}
 m_c(\mathscr L)\;:=\;\inf_\mathscr L \cF\;=\;
\inf_{\{\Om_t\}\in\mathscr L}\left[\max_{t\in[0,1]} 
\Fc(\Om_t)\right].
\end{equation*}
By Proposition \ref{p:isop}, $m_c (\mathscr L)\geq C(M,F,c)>0$.
We say that a sequence $\{\{\Om_t\}^k\}\subset\mathscr L$ is minimizing if 
$$
\lim_{k\to\infty}\cF(\{\Om_t\}^k)\;=\;m_c(\mathscr L)\, .
$$ 
A sequence  $\{\Om_{t_k}^k\}$ is a min-max sequence if $\{\{\Om_t\}^k\}$ is minimizing and  $\Fc(\Om_{t_k}^k)
\to m_c(\mathscr L)$. 
\begin{Remark}\label{uniform}
We observe that 
$$m_c(\mathscr L)\leq \inf_{\{\Om_t\}\in\mathscr L}\left[\max_{t\in[0,1]} 
\FF(\Om_t)\right]=:m_0(\mathscr L)<\infty, \qquad \forall c>0.$$
\end{Remark}
We will focus our study on the following families of sweepouts:

\begin{Definition}\label{d:homotopy}
Two sweepouts $\{\Om^0_s\}$, $\{\Om^1_s\}$ are homotopic
if there exists a generalized smooth family $\{\Om_t\}_{t\in [0,1]^2}$
such that $\Om_{(0,s)} = \Om^0_s$ and $\Om_{(1,s)}=\Om^1_s$.
A family $\mathscr L$ of sweepouts is {\em homotopically closed} if 
it contains the homotopy class of each of its elements.
\end{Definition}

The main result of this paper is the following:

\begin{Theorem}\label{t:main} Given $c>0$, for any homotopically closed family 
$\mathscr L$ of sweepouts there is a min--max sequence
$\{\Om_{t_k}^k\}$ converging (in the sense of varifolds)
to a non trivial surface $\Sigma$ with multiplicity one, which is smooth and almost embedded outside of one point $p\in M$ and $H^\Sigma_F\equiv c$. Moreover $\FF(\Sigma)\leq 2(m_c(\mathscr L)+c\haus^3(M))$ and one of the following properties hold:
\begin{itemize}
\item[(a)] there exists $R_{\Sigma}>0$ such that $\Sigma$ is smooth stable in $B_{R_\Sigma}(x)$ for every $x\in  M$ and there exists $y\in M$ such that $\Sigma$ is smooth stable in $M\setminus \overline{B_{18R_\Sigma}(y)}$;
\item[(b)] denoting $R_M:=\frac 12\min\left \{\lambda/c,  \Inj(M)/18, \frac{1}{\lambda(c+\lambda+4\lambda^3)}\right \}$, then $\Sigma$ is smooth stable in $B_{R_M}(x)$ for every $x\in  M$ (when $c\to 0$ then $R_M$ depends just on $F$ and $M$).
\item[(c)] $\Sigma$ is stable in $M\setminus \{p\}$.
\end{itemize}
\end{Theorem}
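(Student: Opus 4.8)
The strategy follows the Almgren--Pitts scheme, adapted to the anisotropic CMC setting along the lines of Zhou--Zhu \cite{ZZ}, and splits into an existence part and a regularity part. First I would run the pull-tight procedure: starting from a minimizing sequence of sweepouts $\{\{\Om_t\}^k\}\subset\mathscr L$, modify it so that every min-max sequence converges (in the sense of varifolds) to a varifold $V$ with $\delta_{\FF}V(X)=c\int\langle X,\nu\rangle\,dV$ in a weak sense, i.e. $V$ is anisotropically stationary for $\Fc$. The fact that $m_c(\mathscr L)>0$ (Proposition \ref{p:isop}) guarantees $V$ is nontrivial, and Remark \ref{uniform} gives the mass bound $\FF(V)\le 2(m_c(\mathscr L)+c\haus^3(M))$ after accounting for the $c\,\haus^3$ term (this is where the factor $2$ and the boundary structure enter, exactly as in \cite{ZZ}). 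Rectifiability of $V$ then follows from the anisotropic rectifiability theorem of the authors \cite{DDG2} (Remark \ref{rectif}) together with the density lower bound of Allard \cite{Allard1983}, using that \eqref{eq:sconv} holds for elliptic $F$.

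The core of the argument is the regularity and the almost-minimizing property. I would introduce the notion of a varifold that is \emph{almost minimizing in small annuli} with respect to $\Fc$ and, via the Almgren--Pitts combinatorial lemma (in its refined form alluded to in the introduction), produce a min-max varifold $V$ which is almost minimizing in small annuli at \emph{every} point of $M$ except possibly finitely many. At each good point one constructs replacements by solving the constrained minimization problem for $\Fc$ in an annulus; Theorem \ref{T:regularity of Ac minimizers} gives that each replacement is a smooth embedded boundary in the annulus, and Theorem \ref{T:compact}(i) gives that the replacement has multiplicity one on its regular part, multiplicity two on the touching set, and remains a boundary. The decisive step is showing that two successive replacements glue smoothly across the interface sphere. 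Here I follow the two-case split described in the introduction: at multiplicity-one interface points I would refine the second replacement so its approximating sequence is regular up to the boundary sphere, invoke the uniform boundary curvature estimates from \cite{DDH} (crucially using $\dim\Sigma=2$, as in Theorem \ref{T:curvat}) to pass to the limit, obtaining a $C^2$ stable surface across the interface; at the isolated multiplicity-two interface points I would prove upper density bounds adapting \cite{DDH}, show every element of $TV(x,V)$ is a multiplicity-two plane (using the already-established regularity at multiplicity-one points plus the anisotropic maximum principle), and then conclude smooth gluing by a graphicality argument. Iterating the replacement procedure in the standard way upgrades $V$ to an almost embedded surface $\Sigma$ with $H_F^\Sigma\equiv c$, smooth and stable in a neighborhood of each good point.

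It remains to remove the finitely many exceptional (non-almost-minimizing) points and to establish the stability trichotomy (a)--(b)--(c). For this I would use the compactness Theorem \ref{T:compact} together with a refinement of the Almgren--Pitts combinatorial argument: each exceptional point $p_i$ comes with a scale $R_i>0$ at which $\Sigma$ fails to be almost minimizing, and one shows that away from the balls $B_{R_i}(p_i)$ the surface $\Sigma$ is smooth and stable, with locally uniform curvature bounds from Theorem \ref{T:curvat}; a covering/iteration argument then forces all but one of these points to be removable, because the combinatorial lemma can be re-applied as long as there are two or more bad points at comparable scales. If the surface ends up smooth stable in a uniform ball around every point one lands in case (a) (with the companion ball around some $y$ coming from the last exceptional scale); in the degenerate regime where the relevant scales are controlled purely by $F,M,c$ one gets case (b) with the explicit $R_M$; and if a genuine single singular point $p$ survives, stability of $\Sigma$ on $M\setminus\{p\}$ is what remains, i.e. case (c). \textbf{The main obstacle} is precisely the smooth gluing of consecutive replacements at the interface in the absence of a monotonicity formula: without it neither the upper density estimates at interface points nor the identification of blowups as planes is automatic, and the whole argument hinges on replacing these classical tools by the two-dimensional boundary curvature estimates of \cite{DDH} on the multiplicity-one stratum and a delicate maximum-principle argument on the multiplicity-two stratum.
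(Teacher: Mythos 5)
Your proposal follows essentially the same route as the paper: pull-tight to land in the class of varifolds with anisotropic first variation bounded by $c$, the Almgren--Pitts combinatorial lemma to get almost minimality in annuli (with the trichotomy on the radius function producing cases (a)--(c)), replacements built by constrained $\Fc$-minimization with the two-case gluing at the interface (boundary curvature estimates on the multiplicity-one stratum, upper density bounds and a maximum-principle blowup analysis on the multiplicity-two stratum), and compactness to handle the exceptional point. The only caveats are cosmetic: the pull-tight limit is merely a varifold with $|\delta_\FF V(X)|\le c\int|X|\,d\|V\|$ rather than exactly CMC, and the reduction to a single bad point is achieved in the paper directly through the pairs $(B_R(x),M\setminus\overline{B_{9R}(x)})\in\cC\cO$ rather than by iterating the combinatorial lemma over finitely many bad points.
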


Since Morse functions exist on every smooth compact Riemannian manifold without boundary, \cite[Corollary 6.7]{Mi}, Proposition \ref{Mor} and Theorem \ref{t:main} provide a proof for Theorem \ref{t:existenceCMC}.
We also remark that Theorem \ref{t:existence} is a corollary of Theorem \ref{t:main}, as we show below.
\begin{proof}[Proof of Theorem \ref{t:existence}]
Fix an homotopically closed family $\mathscr L$ of sweepouts. Consider the sequence $c_k=\frac 1k$. Applying Theorem \ref{t:main}, we construct a sequence of non trivial surfaces $\Sigma_k$ with multiplicity one, which are smooth and almost embedded outside of a point $p_k\in M$, with constant anisotropic mean curvature $c_k$, such that $\FF(\Sigma_k)\leq 2(m_{c_k}(\mathscr L)+c_k\haus^3(M))$. In particular, by \eqref{cost per area} and Remark \ref{uniform}, $\sup_{k} \H^2(\Sigma_k)< \infty$.
Moreover one of the following properties hold:
\begin{itemize}
\item[(i)] there exists $R>0$ such that up to subsequences $R_{\Sigma_k}\geq R$ for every $k$;
\item[(ii)] $R_{\Sigma_k}\to 0$;
\item[(iii)] $\Sigma$ is smooth stable in $B_{R_M}(x)$ for every $x\in  M$;
\item[(iv)] $\Sigma_k$ is stable in $M\setminus \{p_k\}$.
\end{itemize}
In case (i), by Theorem \ref{T:compact}(ii), we deduce that $\{\Sigma_k\}$ converges locally smoothly (with multiplicity) to some smooth, embedded and stable surface in every $B_R(x)$. By the arbitrarity of $x\in M$, we conclude the proof.\\
In case (ii), for every $k$ there exists $y_k \in M$ such that $\Sigma_k$ is stable in $M\setminus \overline{B_{18R_{\Sigma_k}}(y_k)}$. By compactness of $M$, up to pass to a non-relabeled subsequence, $y_k\to p\in M$. In particular, since $R_{\Sigma_k}\to 0$, for every compact set $K\subset M\setminus \{p\}$, there exists $N\in \N$ such that for every $k\geq N$ then $\Sigma_k$ is smooth, $c_k$-stable and almost embedded in $K$.
By Theorem \ref{T:compact}(ii), we deduce that $\{\Sigma_k\}$ converges locally smoothly (with multiplicity) to some smooth, embedded and stable surface in $\mbox{int}(K)$. By the arbitrarity of $K$, we conclude the proof.\\
In case (iii), we can argue as in case (i).\\
In case (iv), by compactness of $M$, up to pass to a non-relabeled subsequence, $p_k\to p\in M$. In particular, for every compact set $K\subset M\setminus \{p\}$, there exists $N\in \N$ such that for every $k\geq N$ then $\Sigma_k$ is smooth, $c_k$-stable and almost embedded in $K$.
By Theorem \ref{T:compact}(ii), we deduce that $\{\Sigma_k\}$ converges locally smoothly (with multiplicity) to some smooth, embedded and stable surface in $\mbox{int}(K)$. By the arbitrarity of $K$, we conclude the proof.
\end{proof}

\section{Proof of Theorem \ref{t:main}} \label{s:overview}
This section is devoted to the proof of Theorem \ref{t:main}.
\subsection{Pull-tight}\label{sect1}
We aim to show the existence of a minimizing sequence $\{\{\Omega_t\}^k\}$ such that the boundaries of any corresponding min--max sequence converge
to a varifold with anisotropic first variation bounded by $c$. Nowadays, it is a well known construction for the isotropic case and it is referred to as pull-tight \cite[Section 4]{CD}, \cite[Section 4.3]{P}. We adapt the pull-tight in \cite{CD,P} to the anisotropic setting for the sake of exposition. In order to state it, we need
further terminology.

We denote
$$\cV:= \left\{V\in\cV (M) \, : \, \|V\|(M) \leq 2\lambda(m_c(\mathscr L)+c\haus^3(M))\right \}.$$
We set 
 $$\cV_\infty^c:=\{V \in \cV:|\delta_{\FF}V(X)| \le c\int  |X|\,d\|V\|\text{ for all } X \in \mathcal X_c(M))\},
$$
to be the set of varifolds with anisotropic mean curvature bounded by $c$.
$\cV_\infty^c$ is clearly closed by lower semicontinuity of the anisotropic first variation with respect to varifold convergence.

\begin{Proposition}\label{p:stationary}
Let $\mathscr L$ be a homotopically closed family of sweepouts. 
There exists a minimizing sequence $\{\{\Om_t\}^k\}
\subset\mathscr L$ such that, if $\{\Om_{t_k}^k\}$ 
is a min-max sequence, then $\cD(\partial \Om_{t_k}^k,\cV_\infty^c)\to 0$.
\end{Proposition}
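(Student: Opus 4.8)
The plan is to implement the classical pull-tight deformation of Almgren--Pitts and Colding--De Lellis in the anisotropic CMC setting. The first step is to produce a continuous map that deforms varifolds towards $\cV_\infty^c$. Concretely, for each varifold $V\in\cV\setminus\cV_\infty^c$ one has $\delta_{\FF}V(X)<-c\int|X|\,d\|V\|$ for some vector field $X$; by a standard normalization and partition-of-unity argument one builds, on the (metrizable, since mass-bounded) set $\cV$, a continuous assignment $V\mapsto X_V\in\mathcal X(M)$ with $\|X_V\|_\infty\le 1$ and $\delta_{\FF}V(X_V)+c\int|X_V|\,d\|V\|\le -h(\cD(V,\cV_\infty^c))$ for some continuous increasing $h$ with $h(0)=0$; for $V\in\cV_\infty^c$ one sets $X_V=0$. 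The continuity of $V\mapsto\delta_{\FF}V(X)$ in both $V$ and $X$ (which follows from the explicit formula for $\delta_{\FF}$ recorded in Remark \ref{duality}, after absorbing the metric as in Remark \ref{metricinG}) is what makes this gluing possible.

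Next I would integrate the time-dependent flow. Let $\{\Phi^V_s\}_{s\ge 0}$ be the one-parameter family of diffeomorphisms generated by $X_V$ (with $V$ frozen), and define a ``pull-tight map'' $H(V,s):=(\Phi^V_s)_\#V$. Using \eqref{1stvFc}-type computations, $\tfrac{d}{ds}\big[\FF((\Phi^V_s)_\#V)\big]\big|_{s=0}=\delta_{\FF}V(X_V)\le -c\int|X_V|\,d\|V\|-h(\cD(V,\cV_\infty^c))$, and a comparable estimate controls $\tfrac{d}{ds}\haus^3$ along the flow when $V=\partial\Om$, so that $\Fc$ is strictly decreased at a rate bounded below in terms of $\cD(V,\cV_\infty^c)$, at least for short time and uniformly on $\cV$. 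One then fixes a suitable time $s=s(V)$ (continuous in $V$, vanishing on $\cV_\infty^c$) so that $V\mapsto H(V,s(V))$ is continuous, equals the identity on $\cV_\infty^c$, strictly decreases $\Fc$ off $\cV_\infty^c$ with a quantitative gain, and — crucially — preserves the class of sweepouts: applied $t$-wise to $\{\Om_t\}$ it yields a new sweepout homotopic to the original (here one checks that the deformed family still satisfies Definition \ref{d:sweep}, using that $X_{\partial\Om_t}$ depends continuously on $t$ through $\partial\Om_t$, and the finitely many singular points $P_t$ are transported diffeomorphically).

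Finally I would run the standard minimizing-sequence argument. Start from any minimizing sequence $\{\{\Om_t\}^k\}\subset\mathscr L$; since $\mathscr L$ is homotopically closed, the pull-tight maps send it to another sequence $\{\{\tilde\Om_t\}^k\}\subset\mathscr L$ that is still minimizing (the energy only decreased). If some min-max sequence $\{\tilde\Om_{t_k}^k\}$ had $\limsup_k\cD(\partial\tilde\Om_{t_k}^k,\cV_\infty^c)\ge\delta_0>0$, then along a $t$-window around $t_k$ where $\Fc$ is near $m_c(\mathscr L)$ the pull-tight would have produced a strictly smaller maximum — by the quantitative decrease $h(\delta_0)>0$ — contradicting that $m_c(\mathscr L)$ is the infimum. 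Hence $\cD(\partial\tilde\Om_{t_k}^k,\cV_\infty^c)\to 0$ for every min-max sequence, which is the assertion.

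The main obstacle, and the place requiring genuine care rather than routine adaptation, is the construction of the \emph{continuous} vector-field assignment $V\mapsto X_V$ together with uniform quantitative control of the $\Fc$-decrease along the flow: one must ensure that a single short time and a single bound $h(\cD(V,\cV_\infty^c))$ work uniformly over the mass-bounded set $\cV$, and that the induced deformation genuinely stays within the sweepout class and within $\mathscr L$. The anisotropic wrinkle — that $\delta_{\FF}$ has the extra lower-order term $\int\langle D_xG,X\rangle\,dV$ compared with the isotropic first variation, and that no monotonicity formula is available — only affects the explicit formulas, not the structure of the argument, and is handled by the bounds \eqref{costper} on $G$ and by working in $\R^3$ via Remark \ref{metricinG}; the second-variation and stability considerations of Theorem \ref{t:main} play no role at this stage.
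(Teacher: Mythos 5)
Your outline is the paper's own argument: the Almgren--Pitts/Colding--De Lellis pull-tight, with the first variation of area replaced by $\delta_{\FF}V(X)+c\int|X|\,d\|V\|$, a continuous vector-field assignment built by partitions of unity on the annuli $\{2^{-l+1}\ge \cD(V,\cV_\infty^c)\ge 2^{-l-2}\}$, a continuous stopping time vanishing on $\cV_\infty^c$, and the $t$-wise deformation of a minimizing sequence. (A cosmetic difference: you package the decrease as a quantitative modulus $h(\cD(V,\cV_\infty^c))$, whereas the paper argues softly in its Step~3 by extracting a varifold limit $V$ of $\partial U_{t_k}^k$ and deriving a contradiction from \eqref{maxx} if $V\notin\cV_\infty^c$; both are standard and interchangeable here.)

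The one place where your proposal asserts something the paper explicitly refuses to take for granted is the claim that the $t$-wise deformed family ``yields a new sweepout homotopic to the original'' and hence stays in $\mathscr L$. The assignment $V\mapsto H_V$ is only continuous, so $t\mapsto H_{\partial U_t^k}$ is only continuous, and the map $(t,x)\mapsto \Phi_{\partial U_t^k}(1,x)$ is not known to be smooth in $t$; consequently the deformed family need not be joined to $\{U_t\}^k$ by a generalized smooth family in the sense of Definitions \ref{d:sweep} and \ref{d:homotopy}, and membership in the homotopically closed class $\mathscr L$ does not follow. The paper's Step~4 repairs this by convolving $t\mapsto h^k_t:=H_{\partial U_t^k}$ with a mollifier in the parameter $t$ to obtain smooth fields $\bar h^k_t$ with $\max_t\|h^k_t-\bar h^k_t\|_{C^1}\le (k+1)^{-1}$, defining the final sequence $\Om^k_t=\Psi^k_t(1,U^k_t)$ via the smoothed flows (which \emph{is} in $\mathscr L$), and then transferring both the minimizing property and the clustering to $\cV_\infty^c$ from the intermediate sequence $\tilde\Om^k_t$ to $\Om^k_t$ via this $C^1$-closeness. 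Without this regularization (or an equivalent one) your argument produces a competitor outside $\mathscr L$, so the contradiction with $m_c(\mathscr L)$ being an infimum over $\mathscr L$ does not close. You correctly flag this as the delicate point, but the fix is a genuine step of the proof rather than a routine verification, and it should be carried out.
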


\begin{proof}
\emph{Step 1: Mapping $\cV$ to the space of vector fields.}
For every $l \in \Z$ we define the annulus
$$\cV_l=\{V \in \cV \, : \, 2^{-l+1}\geq d(V,\cV_\infty^c)\geq 2^{-l-2}\}.$$
The sets $\cV_l$ are compact in the weak$^*$ topology by Banach-Alaoglu Theorem.

For every $V \in \cV_l$, there exists by definition a smooth vector field
$$X_V \in \mathcal X (M), \quad \mbox{ s.t. }  \, \delta_\FF V(X_V)+c\int  |X|\,d\|V\|<0.$$
If $l\in \N\setminus\{0\}$, up to multiply $X_V$ by a suitable constant, we can also assume
$$\|X_V\|_{C^l}\leq \frac 1l.$$
By continuity of the functional $Z\mapsto \delta_\FF Z(X_V)+c\int  |X_V|\,d\|Z\|$ with respect to the $weak^*$ topology for varifolds, we get that for every $V \in \cV_l$, there exists $\eps_V>0$ such that
\begin{equation}\label{cover}
\delta_\FF W(X_V)+c\int  |X_V|\,d\|W\|\leq \frac 12 \left( \delta_\FF V(X_V)+c\int  |X_V|\,d\|V\|\right), \quad \forall W\in U_{2\eps_V}(V).
\end{equation}
By compactness of $\cV_l$, we can cover it with a finite number $N(l)$ of balls $U_{\eps_{V_i^l}}(V_i^l)$ satisfying property \eqref{cover} (with vector fields $X_{V_i^l}$). 
Moreover, for each $i=1,\dots,N(l)$, we choose $\varphi_i^l\in C_c(U_{2\eps_{V_i^l}}(V_i^l))$ which is equal to $1$ on $U_{\eps_{V_i^l}}(V_i^l)$ and satisfies $0\leq \varphi_i^l\leq 1$.

We now define the following continuous function
$$H^l:V \in \cV_l \to  H_V^l \in \mathcal X (M) \qquad \mbox{where} \quad H_V^l:= \frac{\sum_{i=1}^{N(l)} \varphi_i^l(V) X_{V_i^l}}{\sum_{i=1}^{N(l)} \varphi_i^l(V)},$$
which by construction satisfies:
$$\delta_\FF V(H_V^l)+c\int  |H_V^l|\,d\|V\|<0 \mbox{ for every } V\in \cV_l.$$
We also remark that if $l\in \N\setminus\{0\}$, then $\|H^l_V\|_{C^l}\leq \frac 1l$ for every $V\in \cV_l$.

Next, for every $l \in \Z$, we choose a function $\psi^l$ with the properties that $0\leq \psi^l \leq  1$ and $ \psi^l =1$ on $\{V \in \cV \, : \, 2^{-l}\geq d(V,\cV_\infty^c)\geq 2^{-l-1}\}$. On $\cV\setminus \cV_\infty^c$ we define the continuous function 
$$H:V \in \cV\setminus \cV_\infty^c \to  H_V \in \mathcal X (M) \qquad \mbox{where} \quad H_V:= \frac{\sum_{l\in \Z} \psi^l(V) H^l_{V}}{\sum_{l\in \Z} \psi^l(V)},$$
which by construction satisfies
$$\delta_\FF V(H_V)+c\int  |H_V|\,d\|V\|<0 \mbox{ for every } V\in \cV\setminus \cV_\infty^c.$$
Moreover, if $d(V,\cV_\infty^c)\leq 2^{-l}$ with $l\in \N\setminus\{0,1\}$, then $\|H_V\|_{C^{l-1}}\leq \frac 1{l-1}$.
Hence, we can extend the map $V\to H_V$ to $\cV$ continuously in every $C^l$ norm by setting it equal to $0$ on $\cV_\infty^c$. 

\emph{Step 2: Mapping $\cV$ to the space of isotopies.}
For each $V\in \cV$ let $\Phi_V$ be the $1$-parameter family of diffeomorphisms generated by $H_V$, i.e.
$$\Phi_V:[0,+\infty)\times M\to M \quad \mbox{where} \quad \frac{\partial \Phi_V}{\partial t}(t,x)=H_V(\Phi_V(t,x)).$$
By continuity of the functional $Z\mapsto \delta_\FF Z(H_V)+c\int  |H_V|\,d\|Z\|$, for each $V\in  \cV\setminus \cV_\infty^c$ there is a positive time $\sigma_V$ such that, for every $s\in [0,\sigma_V]$
$$ \delta_\FF ((\Phi_V(s,\cdot))_\# V)(H_V)+c\int  |H_V|\,d\|(\Phi_V(s,\cdot))_\# V\|\leq \frac 12 \left(\delta_\FF V(H_V)+c\int  |H_V|\,d\|V\|\right)<0.$$
By the continuity of the map $H_V$, the map 
$$(s,V)\mapsto  \delta_\FF ((\Phi_V(s,\cdot))_\# V)(H_V)+c\int  |H_V|\,d\|(\Phi_V(s,\cdot))_\# V\|$$
 is also continuous. Thus we conclude the existence of a radius $\rho_V$ such that, for every $s\in [0,\sigma_V]$
 and $W\in U_{2\rho_V}(V)$
$$ \delta_\FF ((\Phi_W(s,\cdot))_\# W)(H_W)+c\int  |H_W|\,d\|(\Phi_W(s,\cdot))_\# W\|\leq \frac 14 \left(\delta_\FF V(H_V)+c\int  |H_V|\,d\|V\|\right)<0.$$
Similarly to Step 1, we can construct a continuous function $\sigma: \cV\to [0, \infty]$ such that $\sigma(V)=0$ for every $V\in \cV_\infty^c$, $\sigma(V)>0$ for every  $V\in \cV\setminus \cV_\infty^c$, and
\begin{equation}\label{nuov}
\max_{s\in [0,\sigma(V)]} \delta_\FF ((\Phi_V(s,\cdot))_\# V)(H_V)+c\int  |H_V|\,d\|(\Phi_V(s,\cdot))_\# V\|<0 \mbox{ for every } V\in \cV\setminus \cV_\infty^c.
\end{equation}
We can redefine a (non relabeled) $H_V$ by multiplying the old one by $\sigma(V)$. The new function $H_V$ remains continuous and vanishes identically on $\cV_\infty^c$, but \eqref{nuov} now reads
\begin{equation}\label{maxx}
\max_{s\in [0,1]} \delta_\FF ((\Phi_V(s,\cdot))_\# V)(H_V)+c\int  |H_V|\,d\|(\Phi_V(s,\cdot))_\# V\|<0, \qquad \forall V\in \cV\setminus \cV_\infty^c.
\end{equation}

\emph{Step 3: Construction of an intermediate minimizing sequence.}
We choose a minimizing sequence of families $\{\{U_t\}^k\}\subset \mathscr L$ and consider the new families $\{\{\tilde \Om_t\}^k\}$ defined as
$$\tilde \Om^k_t=\Phi_{\partial U_t^k}(1,U_t^k), \qquad \forall t\in [0,1], k \in \N.$$
Notice that this is not our final minimizing sequence, because $\{\{\tilde \Om_t\}^k\}$ is not necessarily an element of $\mathscr L$, since the map $(t, x)\mapsto \Phi_{\partial U_t^k}(1, x)$ is not known to be smooth in the parameter $t$.
Nevertheless, we prove that the sequence $\{\{\tilde \Om_t\}^k\}$ satisfies the property claimed by the proposition. 
By \eqref{maxx}, we know that for every $t\in [0,1]$ and $k \in \N$
\begin{equation}\label{maxxx}
\begin{split}
\FF^c(\tilde \Om^k_t) - \FF^c(U_t^k) &\leq \int_0^1\delta_{\FF^c} (\Phi_{\partial U_t^k}(s,U_t^k))(H_{\partial U_t^k})\, ds\\
&\leq \int_0^1\left(\delta_\FF (\Phi_{\partial U_t^k}(s,\partial U_t^k))(H_{\partial U_t^k})+c\int  |H_{\partial U_t^k}|\,d\|\Phi_{\partial U_t^k}(s,\partial U_t^k)\|\right)\, ds\leq 0.
\end{split}
\end{equation}
Since $\{\{U_t\}^k\}\subset \mathscr L$ is a minimizing sequence, \eqref{maxxx} implies that also $\{\{\tilde \Om_t\}^k\}$ is a minimizing sequence. Notice that we are making a slight abuse of notation from now until the end of the step, since $\{\{\tilde \Om_t\}^k\}$ is not necessarily a subset of $\mathscr L$. We still call it minimizing sequence to denote that $\limsup_{k\to\infty}\cF(\{\tilde \Om_t\}^k)\;\leq\;m_c(\mathscr L).$ We will use the same abuse of notation for a min-max sequence $\{\tilde \Om_{t_k}^k\}$, simply meaning that $\lim_{k\to\infty} \FF^c(\tilde \Om_{t_k}^k)=m_c(\mathscr L)$.

We now want to show that every min-max sequence associated to $\{\{\tilde \Om_t\}^k\}$ clusters to $\cV_\infty^c$. The main idea of the proof is to show that the min-max sequences associated to $\{\{\tilde \Om_t\}^k\}$ are generated through $\Phi$ just by those min-max sequences of $\{\{U_t\}^k\}$ clustering to $\cV_\infty^c$.
Indeed, fix a general sequence $\{t_k\}$ such that $\lim_{k\to\infty} \FF^c(\tilde \Om_{t_k}^k)=m_c(\mathscr L)$. By \eqref{maxxx}, we deduce that $\partial U_{t_k}^k$ is a min-max sequence associated to $\{\{U_t\}^k\}$. In particular, up to extract a non relabeled subsequence, 
$$\haus^2(\partial  U_{t_k}^k)\leq \lambda\FF(\partial U_{t_k}^k) \leq 2\lambda(m_c(\mathscr L)+c\haus^3(M)).$$
This uniform bound implies that, up to possibly passing to a further subsequences, $\partial U_{t_k}^k$ converges to some varifold V. In particular, by continuity of the maps $\Phi$, we deduce that
$$\tilde \Om^k_{t_k}=\Phi_{\partial U_{t_k}^k}(1,U_{t_k}^k) \rightharpoonup \Phi_{V}(1,\cdot)_\# V, \qquad \mbox{in the sense of varifolds}.$$
We claim that $V \in  \cV_\infty^c$, otherwise if $V \in  \cV \setminus \cV_\infty^c$, then we would compute the following contradiction:
\begin{equation*}
\begin{split}
&m_c(\mathscr L)=\lim_{k\to \infty}\FF^c(\tilde \Om_{t_k}^k)=\lim_{k\to \infty} \FF^c(U_{t_k}^k)+\lim_{k\to \infty}\int_0^1\delta_{\FF^c} (\Phi_{\partial U_{t_k}^k}(s,U_{t_k}^k))(H_{\partial U_{t_k}^k})\, ds\\
&\leq m_c(\mathscr L)+ \lim_{k\to \infty}  \int_0^1 \left(\delta_\FF ((\Phi_{\partial U_{t_k}^k}(s,\cdot))_\# \partial U_{t_k}^k)(H_{\partial U_{t_k}^k})+c\int  |H_{\partial U_{t_k}^k}|\,d\|(\Phi_{\partial U_{t_k}^k}(s,\cdot))_\# \partial U_{t_k}^k\|\right)ds\\
&=m_c(\mathscr L)+  \int_0^1 \left(\delta_\FF ((\Phi_{V}(s,\cdot))_\# V)(H_{V})+c\int  |H_{V}|\,d\|(\Phi_{V}(s,\cdot))_\# V\|\right)ds \overset{\eqref{maxx}}{<}m_c(\mathscr L).
\end{split}
\end{equation*}
Since $V \in  \cV_\infty^c$, then $H_V=0$ and consequently the diffeomorphism $ \Phi_{V}$ is just the identity. We deduce that  $\tilde \Om^k_{t_k} \rightharpoonup \Phi_{V}(1,\cdot)_\# V=V \in \cV_\infty^c$. By the arbitrarity of the min-max sequence $\{\tilde \Om^k_{t_k}\}$ we conclude the proof of this step.

\emph{Step 4: Construction of the final minimizing sequence.}
We now wish to construct  the final minimizing sequence $\{\{\Om_t\}^k\}$, which still satisfies the property of the proposition and is contained in $\mathscr L$. To this aim we want to regularize each  $\{\{U_t\}^k\}$ in the parameter $t$. 

For each $k\in \N$, let $h^k_t$ denote the one parameter family of vector fields $H_{\partial U_{t}^k}$. The map $(t, x) \mapsto h^k_t(x)$ is continuous. Moreover, for every fixed $l\in \N$
$$
\lim_{t\to \tau}\|h^k_t(\cdot)-h^k_{\tau}(\cdot)\|_{C^l}=\lim_{t\to \tau}\|H_{\partial U_{t}^k}-H_{\partial U_{\tau}^k}\|_{C^l}=0.
$$
Convolving $h^k_t$ with a standard convolution kernel in the parameter $t$, we can construct a smooth map $(t, x) \mapsto \bar h^k_t(x)$ with the property that 
\begin{equation}\label{maxxxx}
\max_{t}\|h^k_t(\cdot)-\bar h^k_{t}(\cdot)\|_{C^1}\leq \frac 1{k+1}.
\end{equation}
Consider now for each fixed $k$ and $t$ the one-parameter family of diffeomorphisms $\Psi^k_{t}(s,\cdot)$ generated by $\bar h^k_{t}$  and the one-parameter family of diffeomorphisms $\Phi^k_{t}(s,\cdot)$ generated by $h^k_{t}$. Recall that $\tilde \Om_{t}^k=\Phi^k_{t}(1,U_{t}^k)$. We define the corresponding family $\Om_{t}^k=\Psi^k_{t}(1,U_{t}^k)$. By the smoothness of the map $(t, x) \mapsto \bar h^k_t(x)$, we know that $\{\{\Om_t\}^k\}\subset \mathscr L$.

We first observe that \eqref{maxxxx} and \eqref{maxxx} imply that
$$\limsup_{k\to\infty}\max_t\Fc(\Om_t^k)=\limsup_{k\to\infty}\max_t\Fc(\tilde \Om_t^k)\leq \limsup_{k\to\infty}\max_t\Fc(U_t^k)\leq m_c(\mathscr L).$$
Consequently $\{\{\Om_t\}^k\}\subset \mathscr L$ is a minimizing sequence.

Furthermore, given a min-max sequence $\{\Om_{t_k}^k\}$ associated  to $\{\{\Om_t\}^k\}$, that is $\lim_{k\to\infty} \FF^c(\Om_{t_k}^k)=m_c(\mathscr L)$, again by \eqref{maxxxx} we have 
$\lim_{k\to\infty} \FF^c(\tilde \Om_{t_k}^k)=m_c(\mathscr L)$. Therefore, by Step 3, $\lim_{k\to\infty}
d( \partial  \tilde\Om_{t_k}^k,\cV_\infty^c)$.
Since, again by \eqref{maxxxx}, 
$$\lim_{k\to \infty}\max_{t}\d(\partial \Om_t^k,\partial \tilde\Om_t^k)=0,$$
we deduce that $\lim_{k\to\infty} d(\partial  \Om_{t_k}^k,\cV_\infty^c)$, as desired.
\end{proof}

\subsection{Almost minimality}
The limiting varifolds of the min-max sequences obtained in Subsection \ref{sect1} are not necessarily regular. Hence, in order to prove Theorem \ref{t:main} we introduce the notion of
almost minimizing varifolds, cf. \cite[Definition 2.3]{DLTas}. 

\begin{Definition}\label{d:am}
 Fix $\eps>0$ and an open set $U\subset M$. An open set
$\Omega \subset M$ is called 
\textit{$\eps$-almost minimizing} ($\eps$-a.m.) in $U$ if it does not exist any one parameter family of open sets $\{\Omega_t\}_{t\in [0,1]}$ such that:
\begin{eqnarray}
&&\{\partial \Omega_t\}_{t\in [0,1]} \mbox{is a generalized smooth family and (so2)
of Definition \ref{d:sweep} hold;}\label{m1}\\
&&\mbox{$\Omega_0=\Omega$ and $\Omega_t\setminus U =
\Omega\setminus U$ for all $t\in  [0,1]$;}\label{m2}\\
&&\mbox{$\FF^c(\Omega_t)\leq \FF^c(\Omega)+
\frac{\eps}{8}$ for all $t\in  [0,1]$;}\label{m3}\\
&&\mbox{$\FF^c(\Omega_1)\leq \FF^c(\Omega)-
\eps$.}\label{m4}
\end{eqnarray}
A sequence $\{\Omega^k\}$ of open sets is called 
{\em almost minimizing (a.m.) in $U$} if each 
$\Omega^k$ is $\eps_k$-a.m. 
in $U$, where $\eps_k$ possibly depends on $U$ and $\eps_k\to 0$ as $k\to \infty$. 
\end{Definition} 

The analogous notion for the isotropic stationary setting was introduced
by Pitts and then rephrased by Colding-De Lellis (see Section 3.2 of \cite{CD}) and by De Lellis-Tasnady (see Section 2.2 of \cite{CD}).
Using a combinatorial argument inspired by a general one of \cite{Alm} reported in \cite{P}, we prove the following existence result, following the isotropic counterpart in \cite[Proposition 2.4]{DLTas}.

\begin{Proposition}\label{p:almost1}
Let $\mathscr L$ be a homotopically closed family of sweepouts. 
There is a min-max sequence 
$\Om^k=\Om^k_{t_k}$ and a function $r: M \to (0,\infty]$ such that for every $An\in\An_{r(x)}(x)$ with $x \in M$, there exists a (non relabeled) subsequence $\{\Om^k\}$ that is a.m. in $An$ and such that $\partial \Om^k$ converges to a varifold $V\in \cV_\infty^c$, as 
$k\to \infty$.
Moreover the function $r(x)$ satisfies one of the following properties:
\begin{itemize}
\item[(a)] there exists $\Inj(M)/18>R>0$ such that $r(x)\equiv R$ for every $x\in M$ and there exists $y\in M$ such that $\{\Om^k\}$ is a.m. in $M\setminus \overline{B_{18R}(y)}$.
\item[(b)] $r(x)\equiv \Inj(M)/18$ for every $x\in M$.
\item[(c)] there exists $p\in M$ such that $r(x)=d(x,p)$ for $x\neq p$ and $r(p)=\infty$.
\end{itemize}
\end{Proposition}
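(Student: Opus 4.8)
The plan is to follow the Almgren--Pitts combinatorial machinery, as streamlined by De Lellis--Tasnady in \cite[Proposition 2.4]{DLTas}, but keeping careful track of the exceptional radii in order to land on exactly one of the three alternatives (a), (b), (c). Starting from the pull-tight minimizing sequence $\{\{\Om_t\}^k\}$ produced in Proposition \ref{p:stationary}, one introduces, for each $j\in\N$, the class $\cC_j$ of open sets $\Om$ that arise as $\Om_t^k$ along a minimizing sequence and are \emph{not} $\tfrac1j$-a.m.\ in some annulus; the core combinatorial lemma (the analogue of \cite[Lemma 3.1]{DLTas} or Pitts' argument) says that if for every choice of finitely many annuli we could deform away from $\cC_j$, we would be able to lower the max of $\Fc$ along the sweepout below $m_c(\mathscr L)$, a contradiction. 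The key point is that two disjoint annuli admit \emph{independent} competitor deformations that can be glued; here this is where Remark \ref{metricinG} and the structure of $\Fc$ (a perimeter term plus a volume term, continuous under the relevant convergences and under the deformations of Definition \ref{d:sweep}) are used exactly as in the isotropic case, so the combinatorial argument transfers verbatim with $\FF$ in place of area and $\Fc$ in place of the mass functional.

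The next step is the diagonal extraction: one obtains a single min-max sequence $\Om^k=\Om^k_{t_k}$ and, for each $x\in M$, a radius $r(x)>0$ such that $\{\Om^k\}$ is a.m.\ in every $An\in\An_{r(x)}(x)$ (after passing to an $x$-dependent subsequence), while simultaneously $\partial\Om^k\to V$ for some $V$, which lies in $\cV_\infty^c$ because the pull-tight of Proposition \ref{p:stationary} forces $\cD(\partial\Om^k_{t_k},\cV_\infty^c)\to0$ and $\cV_\infty^c$ is closed. One may take $r$ lower semicontinuous and bounded by $\Inj(M)/18$. The trichotomy is then produced by a cut-off/covering argument on the function $r$: if $\inf_M r=0$ is not attained at an isolated point we can instead enlarge the domain of a.m.-ness to a uniform small ball $R$ together with a single large complementary region $M\setminus\overline{B_{18R}(y)}$, giving (a); if $r$ can be taken to be the constant $\Inj(M)/18$ we get (b); and if the infimum is attained only at a single point $p$ — the one carrying the index of the surface, as explained in the introduction — then one shows $r(x)$ may be taken equal to $\dist(x,p)$ with $r(p)=\infty$, which is (c). The refinement from ``finitely many bad points'' to ``at most one'' is precisely where Almgren--Pitts' combinatorial lemma must be pushed: a compactness/pigeonhole argument on the finite set of centers shows all but one can be absorbed into annuli where a.m.-ness already holds.

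I expect the main obstacle to be this last refinement: ensuring that the exceptional set of radii collapses to the three clean alternatives rather than a generic lower-semicontinuous $r$. In the isotropic literature one typically only states the existence of \emph{some} positive $r(x)$; here we need the sharper dichotomy between a uniform radius (with one bad large annulus) and a single puncture, which requires re-running the combinatorial deformation argument while respecting the constraint that competitors on nested/overlapping annuli around a putative bad point must be compatible. Concretely, the difficulty is combinatorial bookkeeping: one fixes a fine cover of $M$ by balls, runs the a.m.\ dichotomy on each, and must argue that no two ``bad'' balls can be simultaneously essential — otherwise disjoint competitor deformations in annuli around each would strictly decrease $\cF$, contradicting minimality. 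All the analytic inputs (continuity of $\Fc$ under the sweepout topology, the uniform mass bound $\haus^2(\partial\Om^k_{t_k})\le 2\lambda(m_c(\mathscr L)+c\haus^3(M))$, compactness of $\cV$ in the weak$^*$ topology, closedness of $\cV_\infty^c$) are already in place, so the proof is genuinely combinatorial once these are invoked.
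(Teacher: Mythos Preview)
Your overall strategy is right---pull-tight, then Almgren--Pitts combinatorics, then a trichotomy on the radius function---but the mechanism you propose for the trichotomy is not the one in the paper, and the version you sketch is both vaguer and harder than necessary.

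The paper does \emph{not} work with a generic lower-semicontinuous $r$ and then try to collapse a finite set of bad centers via a pigeonhole/covering argument. Instead it introduces the class $\cC\cO$ of pairs $(A,B)$ of open sets with $\d(A,B)\ge 4\min\{\diam A,\diam B\}$, and proves a stronger intermediate statement (Proposition~\ref{p:combinatorial}): the min-max sequence is a.m.\ in $(A,B)$---i.e.\ in at least one of $A$ or $B$---for \emph{every} pair in $\cC\cO$. Once this is in hand, the trichotomy is almost immediate. For each $0<R<\Inj(M)/9$ and each $x\in M$ the pair $(B_R(x),\,M\setminus\overline{B_{9R}(x)})$ lies in $\cC\cO$, so one has the dichotomy: either ($a_R$) the sequence is a.m.\ in $B_R(x)$ for all $x$, or ($b_R$) there is some $p_R$ with the sequence a.m.\ in $M\setminus\overline{B_{9R}(p_R)}$. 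Set $R'=\sup\{R<\Inj(M)/18:(a_R)\text{ holds}\}$; the three cases (a), (b), (c) correspond exactly to $0<R'<\Inj(M)/18$, $R'=\Inj(M)/18$, and $R'=0$. In the last case one simply extracts a convergent subsequence $p_{1/j}\to p$ by compactness of $M$, and every annulus centered at $p$ (resp.\ every small enough annulus centered at $x\ne p$) is eventually contained in $M\setminus\overline{B_{1/j}(p_j)}$.

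So the single point $p$ in case~(c) does not come from a contradiction argument showing ``two bad balls cannot be simultaneously essential''; it comes from compactness of $M$ applied to the centers $p_R$. The ``refinement from finitely many bad points to at most one'' that you flag as the main obstacle is therefore not a genuine combinatorial difficulty once you pass through the pair-a.m.\ formulation. Your sketch would likely work in principle, but it reinvents, less cleanly, what the $\cC\cO$-pair device gives for free.
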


The proof of Proposition
\ref{p:almost1} will build on the following result.

\begin{Lemma}\label{l}
Given two open sets $U\subset\subset \tilde U\subset M$ and a sweepout
$\{Q_t\}_{t\in [0,1]}$. Consider an $\eps>0$, $t_0\in [0,1]$ and a one parameter family
of open sets $\{\Omega_s\}_{s\in [0,1]}$ satisfying \eqref{m1}, \eqref{m2}, \eqref{m3}
and \eqref{m4}, with $\Omega_0 = Q_{t_0}$. Then there exists $\eta>0$ such that  for every $a,\alpha,\beta,b$ with
$t_0-\eta \leq a < \alpha<\beta<b\leq t_0+\eta$, the following holds:
There exists a sweepout $\{ Q'_t\}_{t\in [0,1]}$ homotopic to $\{\partial Q_t\}$ and satisfying:
\begin{itemize}
\item[(a)] $Q_t = Q'_t$ for every $t\in [0,a]\cup [b,1]$ and
$Q_t\setminus \tilde U=Q'_t\setminus \tilde U$ for every $t\in (a,b)$;
\item[(b)] $\FF^c (Q'_t)\leq 
\FF^c (Q_t) + \frac{\eps}{4}$ for every $t \in [0,1]$;
\item[(c)] $\FF^c (Q'_t)\leq \FF^c (Q_t)
-\frac{\eps}{2}$ for every $t\in (\alpha,\beta)$.
\end{itemize}
\end{Lemma}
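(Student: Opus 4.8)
The plan is to interpolate between the given competitor family $\{\Omega_s\}_{s\in[0,1]}$ and the sweepout $\{Q_t\}$ on the small parameter window $(a,b)$, splicing the deformation into $\{Q_t\}$ only inside $\tilde U$ so as to preserve the homotopy class and the behavior outside $\tilde U$. First I would fix the continuity of $t\mapsto \FF^c(Q_t)$ (part of Definition \ref{d:sweep}) and of $s\mapsto \FF^c(\Omega_s)$ to choose $\eta>0$ small enough that $\FF^c(Q_t)\le \FF^c(Q_{t_0})+\frac{\eps}{8}$ for $|t-t_0|\le\eta$; this is the only role of $\eta$, and it is where the smallness is spent. Then, given $a<\alpha<\beta<b$ inside $(t_0-\eta,t_0+\eta)$, I would build $\{Q'_t\}$ in three regimes: for $t\in[0,a]\cup[b,1]$ set $Q'_t=Q_t$; on $[a,\alpha]$ let $Q'_t$ reparametrize the path $s\mapsto \Omega_s$ (running $s$ from $0$ to $1$) modified so that it agrees with $Q_t$ outside $\tilde U$ — this is possible because \eqref{m2} guarantees $\Omega_s\setminus U=\Omega_0\setminus U=Q_{t_0}\setminus U$, and we only need to additionally adjust in the collar $\tilde U\setminus U$ to match $Q_t\setminus\tilde U$, which can be done using a cutoff isotopy supported in $\tilde U\setminus U$ whose anisotropic energy cost is controlled by the continuity in $t$ (again absorbed into $\eps/8$); on $[\alpha,\beta]$ keep $Q'_t$ essentially equal to the deformed $\Omega_1$; on $[\beta,b]$ reverse the process, deforming $\Omega_1$ back out and interpolating to $Q_b$.

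The key estimates are then bookkeeping with \eqref{m3} and \eqref{m4}. On $[a,\alpha]\cup[\beta,b]$ the energy of $Q'_t$ is at most $\FF^c(\Omega_s)$ (for the relevant $s$) plus the collar correction plus the discrepancy $\FF^c(Q_t)-\FF^c(Q_{t_0})$; using \eqref{m3} this is $\le \FF^c(Q_{t_0})+\frac{\eps}{8}+(\text{collar}) +\frac{\eps}{8}\le \FF^c(Q_t)+\frac{\eps}{4}$ once the collar term is made $\le\eps/8$ and $\FF^c(Q_{t_0})\le\FF^c(Q_t)+\eps/8$ — proving (b). On $(\alpha,\beta)$, $Q'_t$ is (a collar modification of) $\Omega_1$, so by \eqref{m4}, $\FF^c(Q'_t)\le \FF^c(\Omega_1)+(\text{collar})\le \FF^c(Q_{t_0})-\eps+\frac{\eps}{8}\le \FF^c(Q_t)+\frac{\eps}{8}-\eps+\frac{\eps}{8}\le \FF^c(Q_t)-\frac{\eps}{2}$, giving (c). The homotopy with $\{\partial Q_t\}$ is witnessed by the obvious two-parameter family that linearly dials the splicing on and off; the smooth-family and (so2) requirements transfer from those of $\{\Omega_s\}$ and $\{Q_t\}$ because all modifications are through ambient isotopies depending smoothly (resp. continuously, with the $C^2$-away-from-$P_t$ convergence) on the parameters.

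The main obstacle will be the collar adjustment: making the spliced family $\{Q'_t\}$ literally agree with $Q_t$ outside $\tilde U$ while $\{\Omega_s\}$ only agrees with $Q_{t_0}$ outside $U$, and doing this through a bona fide \emph{generalized smooth family} (not meracy a measurable family) with energy cost controllable by $\eps$. Concretely one interpolates $\Omega_s$ and $Q_t$ in the shell $\tilde U\setminus U$ using a partition-of-unity / graphical gluing as in \cite[Proposition 2.4]{DLTas}, where the anisotropic area in the shell is estimated by $\lambda\,\H^2$ of the boundary and hence goes to $0$ with the shell width and with $|t-t_0|$; since $U\subset\subset\tilde U$ we have room to shrink. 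This is precisely the step where the proof differs in form (though not in substance) from the isotropic case, and where the uniform bounds \eqref{cost per area} on $F$ are used to reduce anisotropic energy estimates to Hausdorff-measure estimates.
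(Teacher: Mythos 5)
Your outline follows the same strategy as the paper's proof: interpolate between $Q_t$ and the competitor family in a collar inside $\tilde U$ using a partition of unity (the paper writes the boundaries as graphs $\gamma_{t,s,\tau}=\varphi_B\gamma_t+\varphi_A((1-s)\gamma_t+s\gamma_\tau)$ over $\partial Q_{t_0}$ in a shell $C=B\setminus\overline A$ with $U\subset\subset A\subset\subset B\subset\subset\tilde U$), run $\{\Omega_s\}$ in the inner region, reverse, and close the estimates with \eqref{m3}, \eqref{m4} and the continuity of $t\mapsto\FF^c(Q_t)$. Two points, however, need repair.

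First, a genuine (if locally repairable) gap: as written, at $t=a$ your $Q'_a$ equals $\Omega_0=Q_{t_0}$ inside $U$, glued to $Q_a$ outside $\tilde U$. Since $Q_{t_0}\cap U\neq Q_a\cap U$ in general, this violates property (a) at $t=a$ (and symmetrically at $t=b$), or else makes the family discontinuous there. The collar cutoff you describe only reconciles the shell $\tilde U\setminus U$; it cannot reconcile the inner region. You need an additional phase in which the \emph{inner} region is first deformed from $Q_t\cap A$ to $Q_{t_0}\cap A$ before $\{\Omega_s\}$ is spliced in, and back again at the end. The paper does this with a reparametrization $\gamma:[a,b]\to[t_0-\eta,t_0+\eta]$ equal to the identity near $a,b$ and to $t_0$ on an inner interval $[\alpha',\beta']\supset[\alpha,\beta]$, setting $O_t\cap A=Q_{\gamma(t)}\cap A$; its energy cost is absorbed by the same oscillation bound \eqref{e:oscil} you already invoke. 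Second, your constants do not close: in (b) you allow $\eps/8$ for the oscillation, $\eps/8$ for the collar, and $\eps/8$ from \eqref{m3}, which sums to more than the required $\eps/4$ above $\FF^c(Q_t)$. The paper's budget is $\eps/16$ for the collar (from \eqref{aaa1}--\eqref{aaa2}), $\eps/16$ for the oscillation, and $\eps/8$ from \eqref{m3}; you should tighten your tolerances accordingly. With these two fixes your argument coincides with the paper's.
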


Proposition \ref{p:almost1} can be proved gathering Lemma \ref{l} with a combinatorial
argument due to Almgren-Pitts. This statement is slightly stronger than the corresponding propositions for the isotropic case in \cite[Section 5]{CD} and \cite[Section 3]{DLTas}. We will need this stronger version to prove the regularity of the surface we construct away from a single point. The difficulty appears in Section \ref{regsec}: in the anisotropic setting we are not able to remove the center singularities in the punctured balls, and in principle we may have a finite number of singularities. But with the strengthened Proposition \ref{p:almost1}, we will be able to cover $M$ with balls such that the surface is regular in all the punctured balls and moreover each center, but $p$, is contained in another ball.

In order to prove  Proposition \ref{p:almost1}, we need some further notation. Below we recall \cite[Definition 3.2]{DLTas}:

\begin{Definition}
 Let $A,B\subset M$ be two open sets. We say that $
\Omega\in \C(M)$ is $\eps$-a.m. in $(A,B)$ if it is $\eps$-a.m. in at least one 
of the two open sets. A sequence $\{\Omega^k\}$ of open sets is called 
{\em almost minimizing (a.m.) in $(A,B)$} if each 
$\Omega^k$ is $\eps_k$-a.m. 
in $(A,B)$ for some sequence $\eps_k\to 0$ (possibly depending on $(A,B)$). 
We denote by $\cC\cO$ the set of pairs 
$(A,B)$ of open sets with
$$\d(A,B)\geq 4\min\{\diam(A),\diam(B)\}.$$
\end{Definition}

The following proposition is Almgren--Pitts combinatorial
Lemma: Proposition \ref{p:almost1} follows as a corollary
of it. 

\begin{Proposition}\label{p:combinatorial}
For every homotopically closed
 family of sweepouts $\mathscr L$, there exists a min-max 
sequence $\{\Om^n\}=\{\Omega^{k(n)}_{t_{k(n)}}\}$ 
such that $\partial \Om^n \rightharpoonup V \in \cV_\infty^c$ in the sense of varifolds, and $\{\Om^n\}$ is a.m. in $(A,B)$  for every $(A, B)\in\cC\cO$. 
\end{Proposition}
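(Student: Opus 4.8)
The plan is to follow the classical Almgren–Pitts combinatorial scheme, as adapted in \cite[Section 3]{DLTas} for the isotropic area functional, with $\FF^c$ playing the role of the area. First I would fix a minimizing sequence $\{\{\Om_t\}^k\}\subset\mathscr L$ for which every associated min-max sequence clusters to $\cV_\infty^c$; such a sequence exists by Proposition \ref{p:stationary}. The goal is then, by a diagonal/subsequence argument over $k$, to select parameters $t_k$ so that $\Om^k_{t_k}$ is simultaneously a.m.\ in both members of \emph{every} pair $(A,B)\in\co$. Since $\co$ is uncountable we must reduce to countably many pairs: as in \cite{DLTas} one covers $\co$ by countably many "admissible" pairs of sets with rational data (say with centers in a countable dense set and rational radii), so that being a.m.\ in one of the countably many pairs implies being a.m.\ in the given pair — this uses the elementary fact that $\eps$-a.m.\ in an open set $U$ implies $\eps$-a.m.\ in any open $U'\subset U$, and that a competitor family for $(A,B)$ is a competitor for a slightly larger admissible pair.

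The heart of the proof is a contradiction argument. Suppose no such min-max sequence exists; then for every min-max sequence, for infinitely many $k$ the set $\Om^k_{t_k}$ is \emph{not} $\eps$-a.m.\ in either $A$ or $B$ for some fixed $\eps>0$ and some admissible pair $(A,B)$. The combinatorial argument of Almgren–Pitts then allows one to "improve" the sweepout: roughly, one partitions $[0,1]$ into finitely many subintervals, and on the subintervals whose slices are large (close to $m_c(\mathscr L)$) one applies Lemma \ref{l} twice — once on $A$ and once on $B$ — to push the energy down by a definite amount $\eps/2$ in the middle while raising it by at most $\eps/4$ elsewhere. Because $\d(A,B)\geq 4\min\{\diam A,\diam B\}$, the two deformations have disjoint supports and can be performed independently and glued; because $A$ and $B$ can be chosen from the countable admissible family, one can arrange to handle all "bad" slices. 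Iterating this finitely many times over the finitely many bad subintervals produces a new sweepout $\{Q'_t\}$, homotopic to the original (hence in $\mathscr L$), with $\max_t\FF^c(Q'_t)\leq m_c(\mathscr L)-\eps/4+o(1)$, contradicting the definition of $m_c(\mathscr L)$. The bookkeeping — choosing $\eta$ uniformly, controlling how the $\eps/4$ increments accumulate over the finitely many interval subdivisions, and ensuring the homotopy stays within $\mathscr L$ — is exactly the content of \cite[Lemma 3.3, Proposition 3.1]{DLTas}, and goes through verbatim once $\FF^c$ is substituted for area, since Lemma \ref{l} is the anisotropic analogue of the single-deformation lemma used there and $\FF^c$ shares the needed continuity and locality properties (note $\FF^c(\Om_t)$ depends continuously on $t$ for a generalized smooth family, and $\FF^c$ is additive over disjoint regions).

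Two points deserve care. First, the extraction of the single min-max sequence that works for \emph{all} pairs at once: this is done by a priority/diagonal argument ordering the countable family of admissible pairs and, for each $k$, choosing $t_k$ to be a parameter that is a.m.\ in the first $k$ pairs (that such a parameter exists among the "large" slices is itself the output of the combinatorial lemma applied finitely many times). Then $\partial\Om^k_{t_k}\rightharpoonup V$ along a subsequence by the mass bound $\haus^2(\partial\Om^k_{t_k})\leq 2\lambda(m_c(\mathscr L)+c\haus^3(M))$ and Banach–Alaoglu, and $V\in\cV_\infty^c$ because the sequence was chosen inside the pull-tight sequence of Proposition \ref{p:stationary}. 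Second, one must check that the modified families $\{Q'_t\}$ produced by Lemma \ref{l} are still generalized smooth families satisfying (so1)–(so2), so that they are genuine sweepouts in $\mathscr L$; this is guaranteed by conclusion (a) of Lemma \ref{l} (the modification is supported in $\tilde U$ and agrees with the original near $t=0,1$) together with the homotopy statement in Lemma \ref{l}.

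I expect the main obstacle to be purely organizational rather than conceptual: carrying out the Almgren–Pitts combinatorial induction correctly — in particular, verifying that the finitely many successive applications of Lemma \ref{l} on interleaved subintervals do not let the $\eps/4$-type increments pile up beyond control, and that one can simultaneously treat the (a priori many) bad pairs $(A,B)$ by exploiting their disjointness. All of this is standard and I would simply follow \cite[Section 3]{DLTas} line by line, remarking that every step uses only continuity of $t\mapsto\FF^c(\Om_t)$, locality/additivity of $\FF^c$, and Lemma \ref{l}, none of which is sensitive to the anisotropy.
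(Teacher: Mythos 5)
Your proposal follows exactly the route the paper takes: the paper's proof of Proposition \ref{p:combinatorial} consists of repeating the Almgren--Pitts combinatorial argument of \cite[Proposition 3.4]{DLTas} verbatim, with $\FF^c$ and $\cF$ in place of $\mathcal H^n$ and the area min-max functional, and with Lemma \ref{l} replacing \cite[Lemma 3.1]{DLTas}, while the membership $V\in\cV_\infty^c$ comes from the pull-tight sequence of Proposition \ref{p:stationary}, just as you say. Your sketch of the contradiction/covering/diagonal bookkeeping is consistent with that reference, so the proposal is correct and essentially identical in approach.
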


\begin{proof}[Proof of Proposition \ref{p:almost1}]
 We claim that the sequence $\{\Om^n\}$ given by Proposition \ref{p:combinatorial} satisfies the claim of 
Proposition \ref{p:almost1}. 
To this aim we fix $R>0$ such that $\Inj(M)>9R>0$. Then, $(B_R(x),M\setminus \overline{B_{9R} 
(x)})\in\cC\cO$ for all $x\in M$. 
In particular we deduce that $\{\Om^n\}$ is
a.m. in $(B_R(x),M\setminus \overline{B_{9R} (x)})$. Hence for every $R\in (0,\Inj(M)/9)$ one of the following cases hold:
\begin{itemize}
 \item [($a_R$)] $\{\Om^n\}$ is a.m. in $B_R(x)$ for every $x\in M$;
 \item [($b_R$)] there exists $p_R\in M$ and a (not relabeled) subsequence $\{\Om^n\}$ such that $\{\Om^n\}$ is a.m. in $M\setminus \overline{B_{9R} (p_R)}$.
\end{itemize}
Denote with $R':=\sup\{\Inj(M)/18>R\geq0 :\mbox{($a_R$) holds}\}$. If $\Inj(M)/18>R'>0$, then denoting $R'':=\frac 23 R$ we clearly have a sequence
as in Proposition \ref{p:almost1} (a) for which $r(x)\equiv R''$ for every $x\in M$ and there exists $y\in M$ such that $\{\Om^n\}$ is a.m. in $M\setminus \overline{B_{18R''}(y)}$. If $\Inj(M)/18=R'$, analogously  (b) holds.

The last possibility is $R'=0$, that is ($b_R$) holds for every $R\in (0,\Inj(M)/18)$. Then there exist a subsequence of $\{\Om^n\}$,
not relabeled, and a sequence of points $\{p_j\}_{j\in \N}\subset M$
such that $p_j\to p \in M$ as $j\to \infty$ and
\begin{center}
for any fixed $j$, there exists a subsequence $\{\Om^{n_j}\}$ that is a.m. in $M\setminus \overline{B_{1/j} (p_j)}$.
\end{center}
If $x\in M\setminus \{p\}$, for every $r<\d(x,p)$ then $B_{r}(x)\subset\subset M\setminus \{p\}$. Therefore, for every $An\in\An_{\d(x,p)}(x)$ there exists a (non relabeled) subsequence $\{\Om^n\}$ that is a.m. in $An$. Moreover, for every  $An\in\An(p)$, we have $An\subset\subset M\setminus \{p\}$, hence there exists a (non relabeled) subsequence $\{\Om^n\}$ that is a.m. in $An$. We deduce that  $\{\Om^n\}$ satisfies (c) of Proposition \ref{p:almost1}, which completes the proof.
\end{proof}

\begin{proof}[Proof of Proposition \ref{p:combinatorial}]
The proof of Proposition \ref{p:combinatorial} is obtained repeating verbatim the proof of \cite[Proposition 3.4]{DLTas}, with the only difference of replacing all the occurences of $\mathcal H^n$ and $\mathcal F$ in \cite[Proposition 3.4]{DLTas} respectively with $\FF^c$ and $\cF$, and replacing the use of \cite[Lemma 3.1]{DLTas} with Lemma \ref{l}.
\end{proof}

\begin{proof}[Proof of Lemma \ref{l}]
Although the proof of Lemma \ref{l} follows the same proof of \cite[Lemma 3.1]{DLTas}, the estimates need to be adapted to the functional $\FF^c$. Hence we sketch the argument for the sake of readability, we refer to the proof of \cite[Lemma 3.1]{DLTas} for more details.

We fix two open sets $A$, $B$ such that $U\subset\subset A \subset\subset B \subset\subset \tilde U$ and $\partial Q_{t_0}\cap C$ is a smooth surface, where $C=B\setminus \overline{A}$.
Moreover, we choose two functions $\varphi_A\in C^\infty_c (B)$, $\varphi_B\in C^\infty_c (M\setminus \overline{A})$ such that $\varphi_A+\varphi_B=1$. 
Next, we consider normal coordinates $(z,\sigma)\in \partial Q_{t_0}\cap C\times (-\delta, \delta)$
in a regular $\delta$--neighborhood of $C\cap \partial Q_{t_0}$. 
As $Q_t$ converges to $Q_{t_0}$, there exist $\eta>0$ and an open $C'\subset C$,
such that the following holds for every $t\in (t_0-\eta, t_0+\eta)$:
\begin{itemize}
\item $\partial Q_t\cap C$ is the graph of a function $\gamma_t$
over $\partial Q_{t_0}\cap C$;
\item $Q_t\cap C\setminus C' = Q_{t_0}\cap C\setminus C'$;
\item $Q_t\cap C' = \{(z,\sigma): \,\sigma<\gamma_t(z)\}\cap C'$.
\end{itemize}
 We also define
\begin{equation*}
\gamma_{t,s, \tau}\;:=\; \varphi_B \gamma_t + \varphi_A ((1-s) \gamma_t + s \gamma_{\tau})\, 
\qquad t, \tau\in (t_0-\eta, t_0+\eta), s\in [0,1].
\end{equation*}
Denoting with $\Gamma_{t,s, \tau}$ the graph of $\gamma_{t,s, \tau}$, there exists $\eta$ small enough such that
\begin{equation}\label{aaa1}
\max_{s,\tau} \FF (\Gamma_{t,s, \tau}) \;\leq\; \FF (\partial Q_t \cap C) + \frac{\eps}{32},
\end{equation}
and
\begin{equation}\label{aaa2}
 \max_{s,\tau} c\haus^3 (\{(z,\sigma): \sigma < \gamma_{t, s, \tau} (z)\}\cap C)\leq c\haus^3 (Q_t\cap C)+\frac{\eps}{32}\, .
\end{equation}
Now, given $t_0-\eta<a<\alpha<\beta<b<t_0+\eta$, we choose $\alpha'\in (a, \alpha)$ and
$\beta'\in (\beta,b)$ and consider a smooth function $\psi: [a,b]\to [0,1]$
that is equal to $0$ in a neighborhood of $a$ and $b$
and has value $1$ on $[\alpha', \beta']$. Moreover we choose a smooth function $\gamma: [a,b]\to [t_0-\eta, t_0+\eta]$ which
is the identity in a neighborhood of $a$ and $b$ and equal to
$t_0$ in $[\alpha', \beta']$.
Finally, we define the family of open sets $\{O_t\}$ satisfying $O_t = Q_t$ for $t\not\in [a,b]$,
while for every  $t\in [a,b]$ we impose:
$$O_t\setminus \overline{B} = Q_t\setminus \overline{B}, \qquad O_t\cap A = Q_{\gamma (t)}\cap A$$
$$O_t\cap C\setminus C'= Q_{t_0}\cap C \setminus C',\qquad 
O_t\cap C' = \{(z,\sigma): \sigma < \gamma_{t, \psi (t), \gamma(t)} (z)\}\cap C'.$$
In particular $\{\partial O_t\}$ is a sweepout homotopic to
$\partial Q_t$. By \eqref{aaa1} and \eqref{aaa2}, we estimate
\begin{equation}\label{aaa3}
\begin{split}
\FF^c (O_t\cap C) &\leq \max_{s,\tau} \FF (\Gamma_{t,s, \tau}) + \max_{s,\tau} c\haus^3 (\{(z,\sigma): \sigma < \gamma_{t, s, \tau} (z)\}\cap C)\\
&\;\leq\;  \FF^c ( Q_t\cap C)+\frac{\eps}{16}
\qquad \mbox{for $t\in [a,b]$.}
\end{split}
\end{equation}
We choose a smooth function $\chi: [\alpha', \beta']\to [0,1]$ which is equal to $0$ in
a neighborhood of $\alpha'$ and $\beta'$ and which is identically $1$ on $[\alpha,\beta]$. For $t\not\in [\alpha', \beta']$ we set $Q'_t = O_t$, while for $t\in [\alpha', \beta']$ we set:
$$Q'_t\setminus A = O_t\setminus A, \qquad  Q'_t\cap A = \Omega_{\chi (t)}\cap A.$$
$\{Q'_t\}$ is a sweepout homotopic
to $\{ O_t\}$ and hence to $\{Q_t\}$. To verify properties $(a)$, $(b)$ and $(c)$ of the lemma, we need to
estimate $\FF^c (Q'_t)$.

If $t\not\in [a,b]$, then $Q'_t \equiv Q_t$ and hence
$\FF^c (Q'_t) \;=\; \FF^c (Q_t)$, which in turn implies the validity of properties $(a)$, $(b)$.

We focus now on the more involved  $t\in [a,b]$. In this case, we have $Q'_t\setminus B=Q_t\setminus B$ and
$Q'_t\cap C= O_t\cap C$. This shows the property $(a)$ of the lemma. Moreover, we have
\begin{eqnarray}
\FF^c (Q'_t)
- \FF^c (Q_t) &\leq& [\FF^c (O_t\cap C)
- \FF^c (Q_t\cap C)] + [\FF^c (Q'_t \cap A)
 -\FF^c (Q_t\cap A)]\, \nonumber\\
&\stackrel{\eqref{aaa3}}{\leq}& \frac{\eps}{16} + 
[\FF^c (Q'_t \cap A) -\FF^c (Q_t\cap A)]\label{e:break}.
\end{eqnarray}
Now, we have to consider three subcases: 
\begin{itemize}
 \item[(Subcase 1)] If $t\in [a,\alpha']\cup [\beta',b]$, then $Q'_t\cap A = O_t
\cap A = Q_{\gamma (t)}\cap A$. Since $\gamma (t), t\in (t_0-\eta,
t_0+\eta)$, choosing $\eta$ small enough, we can assume
\begin{equation}\label{e:oscil}
|\FF^c(Q_s\cap A) - \FF^c (Q_\sigma \cap A)|
\;\leq\; \frac{\eps}{16} \qquad \mbox{for every $\sigma,s\in (t_0-\eta, t_0+\eta)$}.
\end{equation}
Hence \eqref{e:break} implies that $
\FF^c (Q'_t)
\;\leq\; \FF^c ( Q_t) + \frac{\eps}{8}$.
\item[(Subcase 2)] If $t\in [\alpha',\alpha]\cup [\beta',\beta]$, then $Q'_t\cap A
= \Omega_{\chi (t)}\cap A$. Hence, by \eqref{e:break}, we compute
\begin{eqnarray*}
\FF^c (Q'_t)-\FF^c (Q_t)
&\leq& \frac{\eps}{16}
+ [\FF^c (Q_{t_0}\cap A) -\FF^c (Q_t\cap A)]\nonumber\\
&&\quad\;\,+\, [\FF^c (\Omega_{\chi (t)}\cap A) - \FF^c (Q_{t_0}\cap A)]\stackrel{\eqref{e:oscil},\eqref{m3}}{\leq}
 \frac{\eps}{4}.
\end{eqnarray*}
\item[(Subcase 3)] If $t\in [\alpha,\beta]$, then $Q'_t\cap A = \Omega_1\cap A$. Hence, by \eqref{e:break}, we compute
\begin{eqnarray*}
\FF^c (Q'_t)-\FF^c (Q_t)
&\leq& \frac{\eps}{16}
+ [\FF^c (\Omega_1\cap A) -\FF^c(Q_{t_0}\cap A)]\\
&&\quad\;\,+ \,[\FF^c (Q_{t_0}\cap A) - \FF^c (Q_t\cap A)] \stackrel{\eqref{m4},\eqref{e:oscil}}{<}
 -\frac{\eps}{2}.
\end{eqnarray*}
\end{itemize}
The previous estimates 
imply properties $(b)$ and $(c)$ of the lemma. This concludes the proof.
\end{proof}

\subsection{Replacements} We use the notion of replacements introduced by Pitts, but we also require a density one condition, as in \cite{ZZ}.

\begin{Definition}
Let $V\in\cV^\infty_c$ and $U\subset M$ be 
an open set. We say that a varifold $V'\in \cV^c_\infty$ is a 
replacement for $V$ in $U$ if $V'\res (M\setminus \bar{U})=V\res M\setminus \bar{U}$, $\theta(x,V')=1$ for $\|V'\|$-a.e. $x \in U$, and $V\res U$ is a smooth $c$-stable surface. 
\end{Definition}

We aim to show that almost minimizing varifolds posses replacements.

\begin{Proposition}\label{rep}
Consider $\{\Om^j\}$, $V$ and $r:M\to (0,
\infty]$ be as in Proposition
\ref{p:almost1}. For every $x\in M$ and $An\in \An_{r(x)} (x)$ there
exist a varifold $\tilde{V}$, a min-max sequence $\{\tilde{\Om}^j\}$
and a map $r':M\to (0,\infty]$ satisfying the following properties:
\begin{itemize}
\item $\tilde{V}$ is a replacement for $V$ in $An$
and $\partial \tilde{\Om}^j\rightharpoonup \tilde{V}$ in the varifolds topology;
\item $\tilde{\Om}^j$ is a.m. in every $An'\in \An_{r'(y)} (y)$
for every $y\in M$;
\item $r'(x)=r(x)$;
\item $\lim_{j\to \infty} (\FF^c (\tilde{\Om}^j)-\FF^c (\Om^j))= 0.$
\end{itemize}
\end{Proposition}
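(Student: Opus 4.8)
The plan is to follow the classical Pitts–Colding–De Lellis construction of replacements, adapted to the functional $\FF^c$ and using the anisotropic regularity theory assembled in Section 2. Fix $x\in M$ and $An=\an(x,s,t)\in\An_{r(x)}(x)$. The first step is to set up the constrained minimization problem: among all one-parameter families $\{\Om_\tau\}$ with $\Om_0=\Om^j$, $\Om_\tau\setminus An=\Om^j\setminus An$, and $\FF^c(\Om_\tau)\le \FF^c(\Om^j)+\eps_j/8$, we follow the family as far down in $\FF^c$ as the $\eps_j$-almost minimality of $\Om^j$ in $An$ permits, and at the end pick a competitor $\Om^{j,*}$ with $\FF^c(\Om^{j,*})$ close (within, say, $1/j$) to the infimum. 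By the $\eps_j$-a.m.\ property this competitor is itself $2\eps_j$-a.m.\ in $An$ (the cut-and-paste argument of \cite{DLTas}), and $\FF^c(\Om^{j,*})-\FF^c(\Om^j)\to 0$. Passing to a subsequence, $\partial\Om^{j,*}\rightharpoonup \tilde V$ for some varifold, and $\tilde V\res(M\setminus\overline{An})=V\res(M\setminus\overline{An})$ because the families agree outside $An$ and $\FF^c$-convergence forces varifold convergence there.

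The second step is the interior regularity of $\tilde V$ inside $An$. Here one argues that on any small ball $B\subset\subset An$, for $j$ large the set $\Om^{j,*}$ is very nearly an $\FF^c$-minimizer in $B$ (any competitor supported in $B$ that lowered $\FF^c$ by a definite amount would contradict the $2\eps_j$-a.m.\ property since $\eps_j\to 0$), so by a standard compactness/replacement-within-a-ball argument combined with Theorem \ref{T:regularity of Ac minimizers} the limit $\partial\Om^{j,*}$ is, in the limit, a smooth almost embedded surface with $H_F\equiv c$ in $An$; the stability comes from the fact that it is a one-sided minimizer among admissible deformations, hence the second variation $S^{\Si}_F$ is nonnegative on compactly supported test functions, i.e.\ $\tilde V\res An$ is $c$-stable. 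The density-one statement $\theta(x,\tilde V)=1$ for $\|\tilde V\|$-a.e.\ $x\in An$ is exactly where the CMC ($c\ne 0$) hypothesis is used: following \cite{ZZ}, the minimizing competitors in the ball are built as reduced boundaries of Caccioppoli sets, and Theorem \ref{T:regularity of Ac minimizers} together with Theorem \ref{T:compact}(i) guarantees the limit is a boundary of multiplicity one off a $1$-dimensional touching set, hence density one $\|\tilde V\|$-a.e.

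The third step is to re-run the Almgren–Pitts machinery with $\tilde\Om^j:=\Om^{j,*}$ in place of $\Om^j$ to produce the map $r'$. Since the modification was confined to $An$ and $\FF^c(\tilde\Om^j)-\FF^c(\Om^j)\to 0$, the family $\{\tilde\Om^j\}$ is still a min-max sequence with $\partial\tilde\Om^j\rightharpoonup\tilde V\in\cV_\infty^c$ (lower semicontinuity of the anisotropic first variation gives $\tilde V\in\cV_\infty^c$), so Proposition \ref{p:almost1}, applied to this new sequence, yields $r':M\to(0,\infty]$ such that $\tilde\Om^j$ is a.m.\ in every annulus in $\An_{r'(y)}(y)$. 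The only point requiring care is the normalization $r'(x)=r(x)$: one must check that replacing inside $An\in\An_{r(x)}(x)$ does not destroy almost minimality at scales up to $r(x)$ around $x$ itself — this follows because any competitor family testing a.m.\ in an annulus around $x$ of radius $<r(x)$ can be intertwined with the original one using that $\Om^{j,*}$ already almost-minimizes $\FF^c$ relative to $An$, exactly as in \cite[Section 7]{CD}. The last bullet, $\lim_j(\FF^c(\tilde\Om^j)-\FF^c(\Om^j))=0$, was already established in the first step.

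The main obstacle is the interior regularity and density-one claim (second step): unlike the isotropic stationary case there is no monotonicity formula \cite{Allardratio}, so one cannot directly bound densities or identify blowups by mass comparison. The construction sidesteps this precisely by working with $c\ne 0$ and with reduced boundaries of finite perimeter sets, so that Theorem \ref{T:regularity of Ac minimizers} (which encodes the anisotropic De Giorgi–type $\eps$-regularity for $\FF^c$-minimizers) and the compactness Theorem \ref{T:compact}(i) do the job; making the almost-minimality localize correctly to small balls — so that these theorems apply — is the technical heart of the argument.
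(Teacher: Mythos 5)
Your overall skeleton matches the paper's: minimize $\FF^c$ over the class of endpoints of admissible deformation families supported in $An$, show the limit is a local $\FF^c$-minimizer so that Theorem \ref{T:regularity of Ac minimizers} and Theorem \ref{T:compact} give a smooth $c$-stable, density-one replacement, and inherit almost minimality outside the modified region. However, there is a genuine gap at the step you yourself call the technical heart. You assert that ``any competitor supported in $B$ that lowered $\FF^c$ by a definite amount would contradict the $2\eps_j$-a.m.\ property.'' Almost minimality forbids \emph{deformation families} whose energy never rises above $\FF^c(\Omega)+\eps_j/8$, not single competitors with lower energy. To derive a contradiction (equivalently, to show that the limit Caccioppoli set is a genuine local minimizer of $\FF^c$ in small balls) you must upgrade a localized competitor $Q$ of $\Omega^{j,k}$ to an admissible one-parameter family joining $\Omega^{j,k}$ to $Q$ with energy increase below the threshold $1/(8j)$. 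This is exactly Lemma \ref{l:concon} in the paper: one radially contracts $\Omega^{j,k}$ to a cone over its trace on $\partial B_r(y)$ and then expands to $Q$, and the energy of the interpolating cones is controlled by $\lambda r\,\cH^1(\partial\Omega^{j,k}\cap\partial B_r(y))$ plus $\lambda^2\FF(\partial\Omega^{j,k}\cap B_{2\rho}(y))$; making the latter smaller than $1/(8j)$ requires the non-concentration Lemma \ref{nonconc}, which is precisely the substitute for the missing monotonicity formula. You correctly flag the absence of monotonicity as the main obstacle, but your ``nearly an $\FF^c$-minimizer in $B$'' claim silently presupposes the very construction that this obstacle makes nontrivial. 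A boundary version (Lemma \ref{l:concon1}) is also needed later for the gluing of consecutive replacements.

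Two secondary points. First, $\tilde V\in\cV^c_\infty$ does not follow from ``lower semicontinuity,'' because the approximants $\partial\tilde\Omega^j$ are not themselves in $\cV^c_\infty$; the paper proves membership by a partition of unity subordinate to $\{An',M\setminus\overline{An}\}$ together with a direct contradiction: a vector field violating the first-variation bound in $An'$ would generate an isotopy decreasing $\FF^c$ linearly in time, contradicting almost minimality of $\tilde\Omega^j$ in $An'$. Second, the map $r'$ should not be produced by re-running Proposition \ref{p:almost1} on the new sequence (the combinatorial lemma applies to min-max sequences extracted from sweepouts in $\mathscr L$, would require fresh subsequence extractions, and gives no control relating $r'$ to $r$, in particular not $r'(x)=r(x)$). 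The paper instead defines $r'$ explicitly: $r'(y)=\min\{r(y),\d(y,An)\}$ for $y\notin An$ (where $\tilde\Omega^j=\Omega^j$, so a.m.\ is inherited verbatim) and $r'(y)=\min\{r(y),\d(y,\partial An')\}$ for $y\in An$, using that $\tilde\Omega^j$ is a.m.\ in a slightly larger annulus $An'\supset\supset An$ by concatenation of deformation families; letting $An'$ exhaust $\An_{r(x)}(x)$ gives $r'(x)=r(x)$, which is close to the intertwining argument you sketch.
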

\begin{Remark}
Once we have applied Proposition \ref{rep} obtaining the replacement $\tilde{V}$, thanks to the properties above we can apply Proposition \ref{rep} again to  $\{\tilde{\Om}^j\}$, $\tilde{V}$, and $r'$ in any $An'\in \An_{r (x)} (x)\cup\bigcup_{y\neq x}
\An_{r'(y)} (y)$.
\end{Remark}

\subsection{Proof of Proposition \ref{rep}}\label{ss:rep_setting}
The proof of Proposition \ref{rep} follows the proof of \cite[Proposition 2.6]{DLTas}, however it requires non-trivial adaptations. We fix $An\in \An_{r(x)} (x)$, and for every $j$, consider
the class $\mathcal{E} (\Omega^j, An)$
of sets $Q$ such that 
there is a family $\{\Omega_t\}$ satisfying
$\Omega_0=\Omega^j$,
$\Omega_1=Q$,
\eqref{m1}, \eqref{m2} and \eqref{m3} for $\eps=\frac{1}{j}$ and $U=An$.
We choose a minimizing sequence $\{\Omega^{j,k}\}_k$
 for $\FF^c$ in the class $\mathcal{E}
(\Omega^j, An)$ such that $\Omega^{j,k}$ converges to a Caccioppoli set
$\tilde{\Omega}^j$, $\partial \Om^{j,k}$ converges to a varifold $V^j\in \cV^c_\infty$, and both $V^j$ and a diagonal sequence $\partial \tilde{\Om}^j = \partial \Om^{j, k(j)}$ converge to the same varifold $\tilde{V}$.

To prove Proposition \ref{rep} we need four intermediate lemmas:

\begin{Lemma}\label{l:concon} For every $j\in \NN$ and
every $y\in An$ there exist a ball $B_\rho (y)\subset An$ 
and $k_0\in \NN$ for which the following statement holds:

For every $k\geq k_0$ and any open set $Q$ satisfying:
\begin{itemize}
\item[-] $\partial Q$ is smooth in the complement of a finite set,
\item[-] $Q\setminus B_\rho (y) = \Omega^{j,k}\setminus B_\rho (y)$,
\item[-] and $\FF^c(Q) \leq \FF^c(\Omega^{j,k})$,     
\end{itemize}
then $Q\in \mathcal{E} (\Omega^j, An)$.
\end{Lemma}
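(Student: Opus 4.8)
The plan is to show that any competitor $Q$ as in the statement can be reached from $\Omega^{j,k}$ by a one-parameter family that stays inside the $\eps/8$ budget of the almost-minimizing class, and then concatenate this family with the family realizing $\Omega^{j,k}\in\mathcal E(\Omega^j,An)$. The point is purely quantitative: since $\FF^c(Q)\le\FF^c(\Omega^{j,k})$, one has enough room to interpolate between $\Omega^{j,k}$ and $Q$ without ever increasing $\FF^c$ by more than a small amount, provided the interpolation happens in a small enough ball.

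First I would recall that $\Omega^{j,k}$ is a minimizing sequence for $\FF^c$ in $\mathcal E(\Omega^j,An)$, so in particular $\FF^c(\Omega^{j,k})\to \inf_{\mathcal E(\Omega^j,An)}\FF^c$, and in fact $\FF^c(\Omega^{j,k})\le\FF^c(\Omega^j)+\tfrac{1}{8j}-\delta_k$ would be too strong; what is actually available is simply that $\{\Omega^{j,k}\}_k\subset\mathcal E(\Omega^j,An)$, so for each fixed $k$ there is a family $\{\Omega^{j,k}_t\}_{t\in[0,1]}$ with $\Omega^{j,k}_0=\Omega^j$, $\Omega^{j,k}_1=\Omega^{j,k}$, satisfying \eqref{m1}, \eqref{m2}, \eqref{m3} with $\eps=1/j$ and $U=An$. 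The goal is then to extend this family past $t=1$ to reach $Q$. Fix $y\in An$ and choose $\rho>0$ so small that $B_\rho(y)\subset An$ and moreover $\FF(\partial\Omega^{j,k}\res \overline{B_\rho(y)})$ and $c\haus^3(B_\rho(y))$ are both smaller than $\eps/64$ — this is where $k_0$ enters, since for $k\ge k_0$ the surfaces $\partial\Omega^{j,k}$ have uniformly bounded $\haus^2$-measure (they lie in $\mathcal E(\Omega^j,An)$, hence $\FF^c(\Omega^{j,k})\le\FF^c(\Omega^j)+\tfrac{1}{8j}$, so $\haus^2(\partial\Omega^{j,k})$ is bounded), and one can pick $\rho$ uniform in $k$ using the absolute continuity of these set functions together with a covering/compactness argument, or simply shrink $\rho$ depending on the uniform mass bound.

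Next I would build the interpolating family. On $[1,2]$ define $\Omega_t$ to first "open up" the region inside $B_\rho(y)$ — e.g. push $\partial\Omega^{j,k}$ to agree with $\partial Q$ inside $B_\rho(y)$ by a generalized smooth homotopy supported in $B_\rho(y)$ — and estimate, for $t\in[1,2]$,
\begin{equation*}
\FF^c(\Omega_t)\le \FF^c(\Omega^{j,k}) + \FF(\partial\Omega_t\res \overline{B_\rho(y)}) + c\haus^3(B_\rho(y)) \le \FF^c(\Omega^{j,k}) + \frac{\eps}{32}.
\end{equation*}
At the endpoint $t=2$ we have $\Omega_2=Q$, and since $\FF^c(Q)\le\FF^c(\Omega^{j,k})$, the terminal value is below $\FF^c(\Omega^{j,k})$. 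Reparametrizing $[0,2]\to[0,1]$ and using $\FF^c(\Omega^{j,k})\le\FF^c(\Omega^j)+\tfrac{1}{8j}$ together with $\tfrac{\eps}{32}=\tfrac{1}{32j}<\tfrac{1}{8j}$ (one should be slightly careful and instead build the interpolation so the total excess over $\FF^c(\Omega^j)$ never exceeds $\tfrac{1}{8j}$; since \eqref{m3} already gives $\FF^c(\Omega^{j,k}_t)\le\FF^c(\Omega^j)+\tfrac{1}{8j}$ on $[0,1]$, the new piece on $(1,2]$ adds at most $\tfrac{\eps}{32}$ on top of $\FF^c(\Omega^{j,k})\le\FF^c(\Omega^j)+\tfrac{1}{8j}$, so one must instead arrange the interpolation to start lowering $\FF^c$ immediately, or absorb the $\tfrac{\eps}{32}$ by noting $\FF^c(\Omega^{j,k})$ is within $o(1/j)$ of the infimum — in the De Lellis--Tasnady scheme this is handled by working with a slightly smaller constant). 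Checking \eqref{m2} is immediate since the new part of the family is supported in $B_\rho(y)\subset An = U$, and \eqref{m1}, i.e. that $\{\partial\Omega_t\}$ is a generalized smooth family satisfying (so2), follows from the corresponding properties of the De Lellis--Tasnady interpolation construction since $\partial Q$ and $\partial\Omega^{j,k}$ are smooth off a finite set.

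The main obstacle is the bookkeeping of the $\eps/8$ budget: one must ensure that gluing the interpolating family on $B_\rho(y)$ to the family realizing $\Omega^{j,k}\in\mathcal E(\Omega^j,An)$ does not overshoot $\FF^c(\Omega^j)+\tfrac\eps8$ at any time. The clean way is to exploit that $\{\Omega^{j,k}\}$ is a \emph{minimizing} sequence, so $\FF^c(\Omega^{j,k})$ is within, say, $\tfrac{\eps}{16}$ of $\inf\mathcal E(\Omega^j,An)$ for $k\ge k_0$, hence $\FF^c(\Omega^{j,k})\le\FF^c(\Omega^j)+\tfrac\eps8-\tfrac{\eps}{16}$; then the additional $\tfrac{\eps}{32}$ from the $B_\rho(y)$ interpolation still leaves us below the threshold. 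Everything else — the construction of the generalized smooth family, the continuity of $\FF^c$ along it, and the verification of (so2) — is a routine adaptation of the corresponding step in \cite[Proposition 2.6]{DLTas}, with $\FF^c$ in place of area and the anisotropic mass estimate $\haus^2\le\lambda\FF$ used to control the relevant quantities.
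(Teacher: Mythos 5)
Your overall architecture — interpolate from $\Omega^{j,k}$ to $Q$ inside a small ball, control the excess, and use the minimizing property of $\{\Omega^{j,k}\}_k$ to leave room in the $\eps/8$ budget before concatenating with the family realizing $\Omega^{j,k}\in\mathcal E(\Omega^j,An)$ — is the right one, and your bookkeeping at the end is essentially what the paper does. But there is a genuine gap at the step where you choose $\rho$: you claim one can make $\FF\big(\partial\Omega^{j,k}\res\overline{B_\rho(y)}\big)<\eps/64$ uniformly for $k\ge k_0$ ``using the absolute continuity of these set functions \dots or simply shrink $\rho$ depending on the uniform mass bound.'' Neither works. A uniform bound on the total mass $\haus^2(\partial\Omega^{j,k})$ says nothing about the mass in a small ball, and absolute continuity for each fixed $k$ gives a radius depending on $k$, whereas the lemma requires $\rho$ to be fixed before $k_0$. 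The obstruction is that mass could concentrate at $y$ along the sequence, i.e.\ the limit varifold $V^j$ could a priori charge the point $y$: in the isotropic setting this is excluded by the monotonicity formula, which is unavailable here. The paper's substitute is the non-concentration Lemma \ref{nonconc} (a doubling estimate for varifolds in $\cV^c_\infty$), which yields $\|V^j\|(B_{4\rho}(y))\le C\|V^j\|(M)\,\alpha^{\log_2(\rho_0/\rho)}$ with $\alpha<1$; combined with weak-$*$ convergence $\partial\Omega^{j,k}\rightharpoonup V^j$ this gives the uniform smallness of $\FF(\partial\Omega^{j,k}\cap B_{2\rho}(y))$ for $k\ge k_0$ (estimates \eqref{re3}--\eqref{x3}). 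This is the one genuinely anisotropic difficulty in the lemma, and your proposal does not supply a replacement for it.

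A secondary, related point: you never construct the homotopy inside $B_\rho(y)$, only assert that one can ``push $\partial\Omega^{j,k}$ to agree with $\partial Q$'' with $\FF(\partial\Omega_t\res\overline{B_\rho(y)})\le\eps/32$ throughout. Deferring to the De Lellis--Tasnady construction is legitimate, but you should at least record what it is and why the bound holds: one picks a good radius $r\in(\rho,2\rho)$ by the coarea formula so that $\cH^1(\partial\Omega^{j,k}\cap\partial B_r(y))\le\frac{2\lambda}{\rho}\FF(\partial\Omega^{j,k}\cap B_{2\rho}(y))$, freezes the boundary data on $\partial B_r(y)$ by replacing a thin annulus with the cone over the slice, radially shrinks $\Omega^{j,k}\cap B_r(y)$ to the center and then grows $Q\cap B_r(y)$ back out; the cone and the rescaled copies are controlled by $\lambda r\,\cH^1(\partial\Omega^{j,k}\cap\partial B_r(y))$ and $\lambda^2\FF(\partial\Omega^{j,k}\cap B_r(y))$, $\lambda^2\FF(\partial Q\cap B_r(y))$ respectively (the latter reduced to the former via $\FF^c(Q)\le\FF^c(\Omega^{j,k})$). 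All of these bounds feed back into the quantity $\FF(\partial\Omega^{j,k}\cap B_{2\rho}(y))$, so without the non-concentration estimate the construction cannot be closed.
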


\begin{Lemma}\label{l:min}
$\partial \tilde{\Omega}^j\cap An$ is a smooth c-stable surface
in $An$ and $\partial \tilde{\Omega}^j\res An = V^j \res An$.
\end{Lemma}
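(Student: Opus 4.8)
The strategy is the standard ``local minimization implies stability'' argument adapted to the anisotropic functional $\FF^c$, combined with the interior regularity for $\FF^c$-minimizers (Theorem \ref{T:regularity of Ac minimizers}). First I would fix $j$ and work with the minimizing sequence $\{\Omega^{j,k}\}_k$ for $\FF^c$ in $\mathcal{E}(\Omega^j,An)$, whose boundaries converge to $V^j$ and whose diagonal subsequence converges to $\tilde V$, with limit set $\tilde\Omega^j$. The core claim is that $\tilde\Omega^j$ is a \emph{local minimizer} of $\FF^c$ inside $An$: for every $y\in An$ there is a ball $B_\rho(y)\subset An$ such that $\tilde\Omega^j$ minimizes $\FF^c$ among competitors agreeing with it outside $B_\rho(y)$. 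The key mechanism is Lemma \ref{l:concon}: any competitor $Q$ that beats $\Omega^{j,k}$ inside a small ball $B_\rho(y)$ still lies in $\mathcal{E}(\Omega^j,An)$, so by minimality of the sequence $\FF^c(Q)\ge \FF^c(\Omega^{j,k})-o(1)$; passing to the limit $k\to\infty$ (using lower semicontinuity of $\FF$ under varifold convergence and continuity of the volume term $\haus^3(\Omega^{j,k})\to\haus^3(\tilde\Omega^j)$ from $L^1$-convergence of the sets, together with the fact that one can also \emph{test} the limit configuration back against the sequence, i.e.\ cut-and-paste $\tilde\Omega^j$ into $\Omega^{j,k}$ inside the ball and use that this is an admissible competitor) yields that $\tilde\Omega^j$ minimizes $\FF^c$ in $B_\rho(y)$.

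Once local $\FF^c$-minimality in each $B_\rho(y)$ is established, Theorem \ref{T:regularity of Ac minimizers} gives that $\partial\tilde\Omega^j\res B_\rho(y)$ is a smooth embedded surface, and by a covering argument $\partial\tilde\Omega^j\cap An$ is a smooth embedded surface. Being a local minimizer of $\FF^c$, it has constant anisotropic mean curvature $c$ and $\de^2_{\FF^c}\tilde\Omega^j(X,X)\ge 0$ for all admissible $X$, hence by \eqref{stabineq} it is $c$-stable in $An$. For the identification $\partial\tilde\Omega^j\res An = V^j\res An$: the varifold $V^j$ is the limit of $\partial\Omega^{j,k}$; lower semicontinuity gives $\|V^j\|\res An \ge \|\partial\tilde\Omega^j\|\res An$ (as Radon measures, using that $\partial\Omega^{j,k}\to\partial\tilde\Omega^j$ in $L^1$-sense of the sets forces the reduced-boundary measures to converge weakly-$*$ with no loss, since no cancellation or concentration can occur — here one uses that the $\FF^c$-minimizing property of $\Omega^{j,k}$ in $B_\rho(y)$ forces $\FF(\partial\Omega^{j,k}\res B_\rho(y))\to\FF(\partial\tilde\Omega^j\res B_\rho(y))$, i.e.\ no mass is lost), and conversely the minimization forces $\|V^j\|(An)\le \limsup \FF(\partial\Omega^{j,k}\res An)/\lambda^{-1}$ type bounds controlling the total mass; combining, the masses match on $An$, and since $V^j$ has anisotropic first variation bounded by $c$ while $\partial\tilde\Omega^j$ is $c$-stable smooth, a unique-continuation/constancy-of-multiplicity argument (the density of $V^j$ is integer by the anisotropic rectifiability theorem of \cite{DDG2}, equals $1$ where $\partial\tilde\Omega^j$ is a classical boundary, and cannot jump) shows $V^j\res An = \partial\tilde\Omega^j\res An$ as varifolds.

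The main obstacle I anticipate is the passage to the limit in the minimization, specifically ruling out loss of mass (or creation of multiplicity) for the varifolds $\partial\Omega^{j,k}\res An$. Because there is no monotonicity formula in the anisotropic setting, one cannot use the usual density-based arguments; instead one must exploit the minimality inside each small ball $B_\rho(y)$ to get, \emph{a priori}, that $\limsup_k\FF^c(\Omega^{j,k})$ restricted to $B_\rho(y)$ does not exceed $\FF^c$ of the limit configuration glued in — this is exactly the content packaged in Lemma \ref{l:concon}, and once one has that upper bound together with lower semicontinuity one gets equality, hence full convergence of the boundary measures on $An$. The secondary subtlety is making sure the cut-and-paste competitors used as test objects genuinely lie in $\mathcal{E}(\Omega^j,An)$ (i.e.\ are reachable by an admissible sweepout satisfying \eqref{m1}, \eqref{m2}, \eqref{m3} with $\eps=1/j$), which is precisely why Lemma \ref{l:concon} is stated before Lemma \ref{l:min} and will be invoked repeatedly; granting it, the argument above goes through.
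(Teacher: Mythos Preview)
Your overall architecture matches the paper (local minimality via Lemma~\ref{l:concon}, then regularity from Theorem~\ref{T:regularity of Ac minimizers}), but two steps have genuine gaps.

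\emph{Stability.} Local $\FF^c$-minimality of $\tilde\Omega^j$ in each small ball $B_\rho(y)$ only gives $\delta^2_{\Fc}\tilde\Omega^j(X,X)\ge 0$ for $X$ supported in that ball; since the second variation is quadratic, this cannot be patched over a covering to yield $c$-stability in all of $An$. The paper instead argues by contradiction at the level of the sequence: if some $X\in\mathcal X_c(An)$ has $\delta^2_{\Fc}\tilde\Omega^j(X,X)<0$, its flow $\varphi_t$ is a smooth isotopy supported in $An$, so the family $t\mapsto\varphi_{\eps t}(\Omega^{j,k})$ is admissible for the class $\mathcal E(\Omega^j,An)$ (it satisfies \eqref{m1}--\eqref{m3} for small $\eps$), and the strict energy drop then contradicts that $\{\Omega^{j,k}\}_k$ is $\FF^c$-minimizing in $\mathcal E(\Omega^j,An)$. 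The point you are missing is that $\mathcal E(\Omega^j,An)$ allows deformations supported anywhere in $An$, not only in small balls.

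\emph{Identification $V^j\res An=\partial\tilde\Omega^j\res An$.} Your route via integrality from \cite{DDG2} and a constancy-of-multiplicity argument is not justified here: at this stage you have not established that $V^j$ has bounded anisotropic first variation or a positive lower density bound, so \cite{DDG2} does not apply, and the ``cannot jump'' assertion is unproved. The paper proceeds differently: once $\lim_k\FF(\partial\Omega^{j,k})=\FF(\partial\tilde\Omega^j)$ is shown (by the same ``test the limit back'' mechanism you describe), Reshetnyak's continuity theorem \cite[Theorem~1, Section~3.4]{CartCurr} upgrades this to $\lim_k\H^2(\partial\Omega^{j,k})=\H^2(\partial\tilde\Omega^j)$, and then \cite[Proposition~A.1]{DLTas} (Caccioppoli sets converging in $L^1$ with converging perimeters have boundaries converging as varifolds) gives the identification directly, without any appeal to rectifiability or integrality of $V^j$.
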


\begin{Lemma}\label{l:concon1} 
Assume that there exists $y\in \partial_+ An$ and a ball $B_{\rho_y} (y)$ such that $\partial \Omega^j\cap B_\rho (y)\cap An=\gamma$ where $\gamma$ is a smooth simple curve. There exist a ball $B_{\rho} (y)\subset B_{\rho_y} (y)$ and $k_0\in \NN$  for which the following statement holds:

For every $k\geq k_0$ and any open set $Q$ satisfying:
\begin{itemize}
\item[-] $\partial Q$ is smooth in the complement of a finite set,
\item[-] $Q\setminus (B_\rho (y)\setminus An) = \Omega^{j,k}\setminus (B_\rho (y)\setminus An)$,
\item[-] and $\FF^c(Q) \leq \FF^c(\Omega^{j,k})$,     
\end{itemize}
then $Q\in \mathcal{E} (\Omega^j, An)$.
\end{Lemma}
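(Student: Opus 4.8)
The plan is to prove Lemma \ref{l:concon1} by following the template of Lemma \ref{l:concon}, but with the added subtlety that $y$ lies on the outer boundary $\partial_+ An$ rather than in the interior, so the competitor $Q$ is allowed to differ from $\Omega^{j,k}$ on the portion $B_\rho(y)\cap An$, whose closure touches $\partial_+ An$. The key point is that any such $Q$ can be realized as the endpoint $\Omega_1$ of an admissible family $\{\Omega_t\}$ that stays inside $An$ and whose $\FF^c$-energy never exceeds that of $\Omega^{j,k}$ by more than $\eps/8 = 1/(8j)$, and then concatenate with an admissible family witnessing $\Omega^{j,k}\in\mathcal E(\Omega^j,An)$ to conclude $Q\in\mathcal E(\Omega^j,An)$.

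First I would fix $j$ and recall that $\Omega^{j,k}\in\mathcal E(\Omega^j,An)$, so there is a family from $\Omega^j$ to $\Omega^{j,k}$ satisfying \eqref{m1}, \eqref{m2}, \eqref{m3} with $\eps=1/j$ and $U=An$. It therefore suffices to produce, for $k$ large, a one-parameter family $\{\Omega_t\}$ with $\Omega_0=\Omega^{j,k}$, $\Omega_1=Q$, supported in $An$ (i.e. $\Omega_t\setminus An=\Omega^{j,k}\setminus An$) and with $\FF^c(\Omega_t)\le\FF^c(\Omega^{j,k})+\eps/8$ for all $t$ — then the concatenation is admissible and has the required energy bound, using that $\FF^c(Q)\le\FF^c(\Omega^{j,k})$ forces the final drop. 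The family is built by interpolating near $y$: since $\partial\Omega^j\cap B_{\rho_y}(y)\cap An=\gamma$ is a single smooth simple curve, for $\rho\le\rho_y$ small the surfaces $\partial\Omega^{j,k}\cap An$ are (by the same convergence/smoothness input used in Lemma \ref{l:concon}, which ultimately relies on Lemma \ref{l:min} giving smooth convergence of $\partial\Omega^{j,k}$ to $\partial\tilde\Omega^j$ away from finitely many points, together with Lemma \ref{l}'s graphical structure) controlled well enough near $\partial_+An$ to define a continuous deformation; one cuts out $B_\rho(y)\cap An$, replaces with the competitor data of $Q$, and joins by a smooth sweepout that keeps the part outside $An$ frozen. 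The choice of $\rho$ is made so that the total $\FF$-mass plus $c\,\haus^3$-mass of anything swept inside $B_\rho(y)\cap An$ is at most $\eps/16+\eps/16$, exactly as in estimates \eqref{aaa1}--\eqref{aaa3} of the proof of Lemma \ref{l}, whence the $\eps/8$ budget of \eqref{m3} is respected.

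The main obstacle I expect is precisely the boundary behavior at $\partial_+An$: unlike the interior case of Lemma \ref{l:concon}, where $B_\rho(y)\subset\subset An$ and one has room on all sides, here the ball $B_\rho(y)$ pokes out of $An$, so the deformation must be performed in the relatively open set $B_\rho(y)\cap An$ while keeping $Q\setminus(B_\rho(y)\setminus An)=\Omega^{j,k}\setminus(B_\rho(y)\setminus An)$ — in particular the part of $\Omega^{j,k}$ in $B_\rho(y)\setminus An$ is untouched, and one must check that the interpolating family genuinely satisfies \eqref{m2} (i.e. $\Omega_t\setminus An=\Omega^{j,k}\setminus An$) and not merely \eqref{m2} relative to a larger set. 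This is where the hypothesis that $\partial\Omega^j\cap B_{\rho_y}(y)\cap An$ is a single smooth simple curve $\gamma$ is used: it guarantees that near $y$ the boundary meets $\partial_+An$ transversally in a controlled way, so that the normal-graph coordinates over $\gamma$ extend up to $\partial_+An$ and the cut-and-interpolate construction of Lemma \ref{l} goes through verbatim with $C$ replaced by a collar of $\gamma$ inside $An$. The rest — choosing $\alpha',\beta'$, the cutoffs $\psi,\chi$, and verifying \eqref{m1}, \eqref{m3} via the oscillation estimate \eqref{e:oscil} — is then routine and parallels the proof of Lemma \ref{l} and Lemma \ref{l:concon}.
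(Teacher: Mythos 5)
Your overall framing (build a low-energy one-parameter family from $\Omega^{j,k}$ to $Q$ supported in $B_\rho(y)\cap An$ and concatenate it with the family witnessing $\Omega^{j,k}\in \mathcal{E}(\Omega^j,An)$) matches the paper, and you correctly identify that the new difficulty is that $y$ sits on $\partial_+ An$. But the mechanism you propose for the deformation inside $B_\rho(y)\cap An$ --- ``cut out $B_\rho(y)\cap An$, replace with the competitor data of $Q$, and join by a smooth sweepout'' via the graphical interpolation of Lemma \ref{l} in a collar of $\gamma$ --- cannot do the job, and it misses the one genuinely new idea. The interpolation $\gamma_{t,s,\tau}$ of Lemma \ref{l} connects two configurations that are both small graphs over a common reference surface; here $Q$ is an \emph{arbitrary} competitor, constrained only by $\FF^c(Q)\le\FF^c(\Omega^{j,k})$, and inside the ball it need not be graphically close to $\Omega^{j,k}$ in any sense (different topology, different sheets, etc.). One must actually manufacture a continuous family joining them, and the tool for that is the cone construction of Lemma \ref{l:concon}: first collapse $\Omega^{j,k}\cap B_\rho(y)\cap An$ through a cone, then expand $Q$ back out from the vertex. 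The boundary adaptation the paper makes is exactly at this point: coning to the single vertex $y\in\partial_+ An$ would drag the trace of $\partial\Omega^{j,k}$ on $\partial_+ An$ along and clash with the frozen data outside $An$, so one chooses coordinates with $\dot\gamma/|\dot\gamma|=e_1$ and replaces the radial cone $T$ of Lemma \ref{l:concon} by the family of one-dimensional cones $C_a\subset\{x_1=a\}$ with vertices $y_a\in\gamma\cap\{x_1=a\}$ over the base $\Omega^{j,k}\cap\partial B_\rho(y)\cap An$, contracting slice-by-slice toward $\gamma$ so that $\gamma$ and the exterior of $An$ stay fixed. Your proposal never constructs this (or any) intermediate pinched state, so no admissible family from $\Omega^{j,k}$ to a general $Q$ is actually exhibited.

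A second, related omission is the energy budget. You attribute the smallness of the deformation cost to the estimates \eqref{aaa1}--\eqref{aaa3}, but those only control the collar interpolation between nearby graphs. What controls the cone stages in Lemma \ref{l:concon} is the chain \eqref{uuuuu1}--\eqref{x5}, whose key input is the non-concentration Lemma \ref{nonconc}: it yields $\|V^j\|(B_{4\rho}(y))\le C\|V^j\|(M)\,\alpha^{\log_2(\rho_0/\rho)}$ with $\alpha\in(0,1)$, so that the mass of $\partial\Omega^{j,k}$ near $y$ --- and hence the $\FF$-energy of the cones and of the rescaled copies of $\partial\Omega^{j,k}\cap B_r(y)$ and $\partial Q\cap B_r(y)$ appearing along the homotopy --- is forced below the $\eps/8$ threshold of \eqref{m3} by choosing $\rho$ small. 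Without invoking Lemma \ref{nonconc} (together with the coarea choice of a good radius $r\in(\rho,2\rho)$) there is no reason these intermediate configurations have small energy, and \eqref{m3} cannot be verified. In short: the concatenation scheme and the role of $\gamma$ are right, but the core of the proof --- the slice-wise cone over $\gamma$ and the non-concentration estimate that makes it cheap --- is absent.
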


\begin{Lemma}\label{l:min1}
Fix $y\in \partial_+ An$ and assume $\partial \Omega^j$ is a smooth embedded c-stable surface in a ball $B_{\rho_y} (y)$. Then there exists a ball $B_{\rho} (y)$ such that $\partial \tilde{\Omega}^j\cap B_\rho (y)\cap An$ is smooth up to the boundary $\partial \Omega^j\cap \partial_+ An \cap B_\rho (y)$. 
\end{Lemma}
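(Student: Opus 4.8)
The plan is to combine the boundary version of the replacement argument (Lemma \ref{l:concon1}) with the boundary curvature estimates of the authors \cite{DDH}. Since $\partial\Omega^j$ is assumed to be a smooth embedded $c$-stable surface in the ball $B_{\rho_y}(y)$, and $y\in\partial_+ An$, in a small enough sub-ball we may regard $\partial\Omega^j\cap\partial_+ An$ as a smooth hypersurface $\Gamma$ which will serve as a fixed boundary for the competitor problem inside $An$. Working in normal coordinates adapted to $\partial_+ An$, the first step is to set up the obstacle/boundary problem: among the competitors $Q$ produced by Lemma \ref{l:concon1} (those agreeing with $\Omega^{j,k}$ outside $B_\rho(y)\setminus An$ and not increasing $\FF^c$), the minimizers of $\FF^c$ are, inside $B_\rho(y)\cap An$, local minimizers of the functional $\FF^c$ subject to the fixed trace $\Gamma$ on $\partial_+ An$. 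By Theorem \ref{T:regularity of Ac minimizers} applied in the interior and its boundary counterpart (the regularity theory for $\FF^c$-minimizers with a prescribed smooth boundary datum, which reduces to the theory in \cite{DM} together with a reflection across the smooth boundary $\Gamma$), each such minimizer $\Omega^{j,k}$ has $\partial\Omega^{j,k}\cap B_\rho(y)\cap An$ a smooth embedded surface whose closure meets $\Gamma$ smoothly, with curvature bounds up to the boundary that are \emph{uniform in $k$} because the bulk term $-c\haus^3$ is a lower-order perturbation with a fixed Lipschitz density and the boundary datum $\Gamma$ is fixed and smooth.

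The second step is to pass to the limit $k\to\infty$. As in Lemma \ref{l:min}, $\partial\Omega^{j,k}\res(B_\rho(y)\cap An)$ is a $c$-stable surface (the almost-minimizing competitors are in particular stable against compactly supported normal perturbations, by the same argument proving Lemma \ref{l:min}), and it has a fixed smooth boundary $\Gamma$ on $\partial_+ An\cap B_\rho(y)$ together with a uniform area bound inherited from $\FF^c(\Omega^{j,k})\le\FF^c(\Omega^j)+1/j$ and \eqref{cost per area}. The boundary curvature estimate of \cite{DDH} — which is where we crucially use that $\Sigma$ is two-dimensional, so that the boundary $\Gamma$ is one-dimensional — then gives a bound $|A^{\partial\Omega^{j,k}}|\le C$ on $\ov{B_{\rho/2}(y)}\cap An$ uniform in $k$, with $C$ depending only on $M$, $c$, $F$, the area bound, and the geometry of $\Gamma$. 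Standard elliptic estimates upgrade this to uniform $C^{2,\alpha}$ bounds up to $\Gamma$, so $\partial\Omega^{j,k}\cap B_{\rho/2}(y)\cap An$ converges (after shrinking $\rho$ once more and passing to a subsequence) in $C^2$ up to the boundary. By Lemma \ref{l:min}, the interior limit is exactly $\partial\tilde\Omega^j\res An$, and by the $C^2$ convergence up to $\Gamma$ the limit $\partial\tilde\Omega^j\cap B_\rho(y)\cap An$ is a smooth surface whose closure contains $\partial\Omega^j\cap\partial_+ An\cap B_\rho(y)$ as a smooth boundary, which is the assertion.

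The main obstacle is the uniform boundary curvature estimate: one must verify that the $c$-stable surfaces $\partial\Omega^{j,k}$, which have their \emph{fixed} boundary $\Gamma$ lying on $\partial_+ An$ but are otherwise free inside $B_\rho(y)\cap An$, satisfy the hypotheses of the boundary estimate in \cite{DDH} (stability, prescribed anisotropic mean curvature, fixed smooth one-dimensional boundary, and an area bound), uniformly in $k$. Here one uses Remark \ref{stablee} to absorb the ambient Ricci term and reduce to the Euclidean statement \cite[Equation (41)]{DDH}, and Remark \ref{metricinG} to work in $\R^3$ with the flat metric. A subtlety is that a priori $\partial\Omega^{j,k}$ might touch $\partial_+ An$ away from $\Gamma$ or accumulate at the finite singular set allowed by Lemma \ref{l:concon1}; the first is excluded by choosing $\rho$ small so that the free boundary condition forces $\partial\Omega^{j,k}$ to detach from $\partial_+ An$ except along $\Gamma$ (comparison with the $\FF^c$-minimizer, which cannot have such contact by the strong maximum principle, using that $\partial_+ An=\partial B_t(x)$ has $H_F>0$ as recorded in Remark \ref{palla}), and the second is excluded once $\Omega^{j,k}$ is the $\FF^c$-minimizer in $B_\rho(y)\cap An$ with smooth boundary datum, since then there is no interior singular set by Theorem \ref{T:regularity of Ac minimizers} and its boundary version.
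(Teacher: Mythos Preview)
Your proposal takes a different route from the paper and has a genuine gap.

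The paper's proof is considerably more direct: it shows that the \emph{limit} $\tilde{\Omega}^j$ itself minimizes $\FF^c$ in the class $\mathcal{P}(\tilde{\Omega}^j, B_{\rho/2}(y)\cap An)$ of finite perimeter sets equal to $\tilde{\Omega}^j$ outside $B_{\rho/2}(y)\cap An$, by a contradiction argument that mirrors the one for Lemma~\ref{l:min} (slicing, mollifying a competitor, and invoking Lemma~\ref{l:concon1} to re-enter $\mathcal{E}(\Omega^j,An)$). Once $\tilde{\Omega}^j$ is known to be an $\FF^c$-minimizer with smooth fixed boundary data on $\partial_+An$, the boundary regularity Theorem~\ref{boundregt} (proved in the Appendix via a density bound and \cite{DS}) gives smoothness of $\partial\tilde{\Omega}^j$ up to $\Gamma$ directly. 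No curvature estimates from \cite{DDH} enter at this stage; those are used only later, in the proof of Proposition~\ref{p:reg}, to glue two \emph{already regular} replacements as $j\to\infty$.

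The gap in your argument is that you apply the boundary curvature estimates of \cite{DDH} (Theorem~\ref{thm:curvature estimates at the boundary}) to the sequence $\partial\Omega^{j,k}$, claiming it is $c$-stable ``by the same argument proving Lemma~\ref{l:min}''. This is not correct: the $\Omega^{j,k}$ are merely elements of a minimizing \emph{sequence} in $\mathcal{E}(\Omega^j,An)$, and the stability proved in Lemma~\ref{l:min} is a property of the \emph{limit} $\partial\tilde{\Omega}^j$, not of the individual $\partial\Omega^{j,k}$. Nor are the $\partial\Omega^{j,k}$ known to be smooth up to $\Gamma$, which is another hypothesis of Theorem~\ref{thm:curvature estimates at the boundary}. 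If instead you intend to first replace each $\Omega^{j,k}$ by a local $\FF^c$-minimizer in $B_\rho(y)\cap An$ (so as to force stability and interior regularity), you then need boundary regularity for those minimizers --- which is precisely Theorem~\ref{boundregt} --- and you must also verify that this modified sequence still converges to $\tilde{\Omega}^j$. At that point the argument has become longer than the paper's, while using the same key ingredient (Theorem~\ref{boundregt}) that the paper simply applies once to the limit.
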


Assuming the validity of the intermediate lemmas \ref{l:concon}, \ref{l:min}, \ref{l:concon1}, \ref{l:min1}, we can conclude the proof of Proposition \ref{rep} as follows:

By definition $\tilde{\Om}^j$ coincides with
$\Om^j$ outside $An$.
Fix an annulus $An'=An (x, \eps, r(x)-\eps)\supset\supset
An$. By the assumptions on
$\{\Om^j\}$, then $\{\tilde{\Om}^j\}$ is also
a.m. in $An'$.
As $\tilde{\Om}^j$ is a.m. in every open subset $U\subset An'$, by the arbitrarity of $\eps$ we deduce that $\tilde{\Om}^j$ is a.m. in any
annulus in $\An_{r(x)} (x)$. Moreover we have that $M = An'\cup (M\setminus An)$.

For every $y\in M\setminus An$, $y\neq x$, 
we define $r'(y):=\min\{r (y), \d (y, An)\}$. If
$An''\in \An_{r'(y)} (y)$, then $\Om^j\cap An'' =
\tilde{\Om}^j \cap An''$, which in turn implies that
$\{\tilde{\Om}^j\}$ is a.m. in $An''$. If
$y\in An$, then we define $r'(y) := \min\{r(y), \d (y,
\partial An')\}$. If $An''\in \An_{r'(y)} (y)$,
then $An''\subset An'$ and, since $\{\tilde{\Om}^j\}$ is a.m.
in $An'$, then $\{\tilde{\Om}^j\}$ is a.m.
in $An''$. 

Now we prove that $\tilde{V}$ is a replacement
for $V$ in $An$. Theorem \ref{T:compact} implies that $\tilde{V}$
is a $c$-stable surface in $An$. We need to show that $\tilde{V}\in \cV^c_\infty$.
To this aim, we consider a partition
of unity $\{\psi_1, \psi_2\}$ for the covering $\{An', M\setminus An\}$ of $M$. Since $\tilde{V}$ coincides with $V$  in $M\setminus An$, for every $X\in \mathcal{X}(M)$ we compute
$$|[\delta_\FF \tilde{V}] (X)| \leq |[\delta_\FF \tilde{V}] (\psi_1 X)|
+| [\delta_\FF \tilde{V}] (\psi_2X) |= [\delta_\FF \tilde{V}] (\psi_1 X)+c\int \psi_2 |X|\,d\|\tilde{V}\|.$$
Hence, it is enough to prove that 
$$|\delta_{\FF} \tilde{V}(X)| \le c\int  |X|\,d\| \tilde{V}\|\text{ for all } X \in \mathcal{X}_c (An').$$
If the inequality above does not holds, then there exists $X\in \mathcal{X}_c (An')$ such that 
$$\delta_{\FF} \tilde{V}(X) +c\int  |X|\,d\| \tilde{V}\leq -4C <0.$$
We consider the isotopy $\Phi$ generated by $X$ through the ODE
$\frac{\partial\Phi(x,t)}{\partial t}=X(\Phi(x,t))$. Denoting 
$$
 \tilde{V} (t)\;:=\;\Phi(t)_{\#} \tilde{V}\qquad
\Sigma^j (t)\;=\;\Phi(t,\partial \tilde{\Om}^j)
\qquad
\tilde\Omega^j (t)\;=\;\Phi(t, \tilde{\Om}^j),
$$
there exists $\eps>0$ such that
$\delta_{\FF} \tilde{V}(t)(X) +c\int  |X|\,d\| \tilde{V}(t)\|\leq-2C$ for every $t\leq \eps$.
Since $\Sigma^j(t)\rightharpoonup \tilde{V}(t)$
in the sense of varifolds, there exists $N\in \NN$ such that 
\begin{equation}\label{ciaociao}
\delta_{\FF} \Sigma^j (t)(X) +c\int  |X|\,d\| \Sigma^j (t)\|\leq 
-{C}\quad\text{for every}\;j>N\;\text{and}\; t\leq\eps.
\end{equation}
Integrating \eqref{ciaociao} in $t$, we conclude that
$ \FF^c(\tilde\Omega^j (t))\leq \FF^c (\tilde{\Omega}^j) - Ct$
for every $t\in [0,\eps]$ and $j\geq N$, which
 contradicts the a.m. property
of $\tilde{\Omega}^j$ in $An'$. This conclude the proof that $\tilde{V}$ is a replacement for $V$ in $An$.

Finally, observe that $\FF^c(\tilde{\Om}^j)
\leq \FF^c (\Om^j)$ by construction and
$\liminf_n (\FF^c (\tilde{\Om}^j)-\FF^c (\Om^j))\geq 0$,
because otherwise we would contradict the a.m. property
of $\{\Om^j\}$ in $An$. We thus conclude that:
$$\lim_{n\to \infty} (\FF^c (\tilde{\Om}^j)-\FF^c (\Om^j))= 0.$$

\subsection{Proof of the intermediate lemmas \ref{l:concon}, \ref{l:min}, \ref{l:concon1}, \ref{l:min1}}
\label{ss:concon}

\begin{proof}[Proof of Lemma \ref{l:concon}]
The proof of Lemma \ref{l:concon} is analogous to the proof of \cite[Lemma 4.1]{DLTas}. Hence we refer the reader to \cite{DLTas} for more detailed justifications of the proof.

Let us fix $j\in \NN$  and 
$y\in An$. 
Let $\rho>0$ be such that $B_{2\rho} (y) \subset An$ 
and consider
an open set $Q$ satisfying the properties in the statement of the Lemma.
Given the local nature of Lemma \ref{l:concon}, by Remark \ref{metricinG} we will assume without loss of generality that the ambient space is $\R^3$.

As in the Step 1 of the proof of \cite[Lemma 4.1]{DLTas}, we can choose $r\in (\rho, 2\rho)$ 
such that, for every $k$, $\partial \Om^{j,k}$ is
regular in a neighborhood of $\partial B_r (y)$ and intersects
it transversally.
For each $z\in \overline{B}_r (y)$ 
we denote with $[y,z]$ the closed segment with end points $y$ and $z$, and $(y,z):=[y,z]\setminus \{y,z\}$.
We define $T$ as the following open cone
$$
T \;=\; \bigcup_{z\in \partial B_r(y) \cap \Omega^{j,k}} (y,z)\, .
$$

We consider $\eps>0$ and a smooth function $\psi:[0, 2\rho]
\to [0,2\rho]$, such that: $|\psi(s) -s|\leq\eps$ and $0\leq \psi'(s)\leq 2$ for every $s$; $\psi (s)=s$ if $|s-r|>\eps$; and $\psi\equiv r$ in a neighborhood of $r$.

We set $\Psi (t,s) := (1-t) s + t \psi (s)$ and, for every $\lambda\in [0,1]$ and every $z\in \overline{B}_r (y)$, we define $\tau_\lambda (z)\in [y,z]$
such that $\d(y,\tau_\lambda (z))=\lambda\, 
\d(y,z)$. For $1<\lambda$, we define $\tau_\lambda(z)$ to be the corresponding point on the segment that is the extension of $[y,z]$.  We can finally define an homotopy $\tilde{\Omega}_t$ between $\Omega^{j,k}$  and $\tilde{\Omega}_1$ as follows:
\begin{itemize}
\item $\tilde{\Omega}_t\setminus An (y, r-\eps, r+\eps)
= \Omega^{j,k} \setminus An (y, r-\eps, r+\eps)$;
\item $\tilde{\Omega}_t \cap \partial B_s (y)
= \tau_{s/\Psi (t,s)} (\Omega^{j,k}\cap \partial B_{\Psi (t,s)})$
for every $s\in (r-\eps, r+\eps) $.
\end{itemize}
Since $\partial \Om^{j,k}$ is
regular in a neighborhood of $\partial B_r (y)$ and intersects
it transversally, we can choose $\eps$ to have $\max_t \FF^c (\partial \tilde{\Omega}_t) - 
\FF^c (\partial \Omega^{j,k})$ 
 as small as desired. Moreover $\tilde{\Omega}_1$ coincides with $T$ in a neighborhood
of $\partial B_r (y)$.
Since $Q$ coincides with $\Omega^{j,k}$
on $M\setminus B_\rho (y)$, the same argument 
can be applied to $Q$. We deduce that one can assume $T=Q=\Omega^{j,k}$ in a neighborhood of $\partial B_r (y)$.

We now consider the following family of open
sets $\{\Omega_t\}_{t\in [0,1]}$:
\begin{itemize}
\item $\Omega_t\setminus \overline{B}_r (y) = \Omega^{j,k}
\setminus \overline{B}_r (y)$ for every $t$;
\item $\Omega_t \cap An (y, |1-2t| r, r)
= T\cap An (y, |1-2t| r, r)$ for every $t$;
\item $\Omega_t \cap \overline{B}_{(1-2t)r} (y)
= \tau_{1-2t} (\Omega^{j,k}\cap \overline{B}_r (y))$
for $t\in [0,\frac{1}{2}]$;
\item $\Omega_t\cap \overline{B}_{(2t-1)r} (y)
= \tau_{2t-1} (Q\cap \overline{B}_r (y))$
for $t\in [\frac{1}{2}, 1]$.
\end{itemize}

This is a generalized smooth family and it satisfies (so2)
of Definition \ref{d:sweep}. It remains to check that
\begin{equation}\label{e:am3bis}
\max_t \FF^c (\Omega_t)
\;\leq\; \FF^c (\Omega^{j,k}) +\frac{1}{8j}\, 
\qquad \forall k\geq k_0.
\end{equation}

We observe that for every $r<2\rho$ and $\gamma\in [0,1]$:
\begin{equation}\label{uuuuu1}
\FF(\partial T\cap B_r(y))\leq \lambda \cH^2 (\partial T\cap B_r(y)) \;\leq\; \lambda r \cH^{1} (\partial
\Omega^{j,k}\cap \partial B_r (y))
\end{equation}
\begin{equation}\label{uuuuu2}
\begin{split}
\FF([\partial (\tau_\gamma (\Omega^{j,k}
\cap \overline{B}_r (y)))]\cap B_{\gamma r} (y))&\;\leq \;\lambda \cH^2 ([\partial (\tau_\gamma (\Omega^{j,k}\cap \overline{B}_r (y)))]\cap B_{\gamma r} (y))\\
&\; \leq \;
\lambda \cH^2 (\partial \Omega^{j,k} \cap B_r (y)) \leq 
\lambda^2 \FF (\partial \Omega^{j,k} \cap B_r (y))
\end{split}
\end{equation}
\begin{equation}\label{e:shrink2}
\begin{split}
\FF ([\partial (\tau_\gamma (Q
\cap \overline{B}_r (y)))]\cap B_{\gamma r} (y))&\;\leq \; \lambda\cH^2 ([\partial (\tau_\gamma (Q
\cap \overline{B}_r (y)))]\cap B_{\gamma r} (y))\\
&\; \leq\; \lambda \cH^2 (\partial Q \cap B_r (y)) \leq \lambda^2 \FF (\partial Q \cap B_r (y))
\end{split}
\end{equation}
\begin{equation}\label{are}
\int_0^{2\rho} \cH^{1} (\partial \Omega^{j,k}
\cap \partial B_\tau (y))\, d\tau
\;\leq\;  \cH^2 (\partial \Omega^{j,k}\cap B_{2\rho} (y))\; \leq\; \lambda \FF (\partial \Omega^{j,k}\cap B_{2\rho} (y)).
\end{equation}
Since $\FF^c(Q) \leq \FF^c(\Omega^{j,k})$, we deduce from \eqref{e:shrink2} that
\begin{equation}\label{e:shrink22}
\begin{split}
\FF &([\partial (\tau_\gamma (Q
\cap \overline{B}_r (y)))]\cap B_{\gamma r} (y))\;\leq \;\lambda^2 \FF (\partial Q \cap B_r (y))\\
&\;\leq \; \lambda^2 [\FF (\partial \Omega^{j,k} \cap B_{2\rho} (y))+ c\haus^3 (Q \cap B_{2\rho} (y)) - c\haus^3 (\Omega^{j,k}\cap B_{2\rho} (y))]\\
&\;\leq \; \lambda^2 [\FF (\partial \Omega^{j,k} \cap B_{2\rho}(y))+c\haus^3 (B_{2\rho} (y))]\leq  \lambda^2 [\FF (\partial \Omega^{j,k} \cap B_{2\rho} (y))+ C \rho^3],
\end{split}
\end{equation}
where $C$ is a (possibly changing) constant depending just on $F$ and $c$.
Hence \eqref{uuuuu1}, \eqref{uuuuu2} and \eqref{e:shrink22}
imply that
\begin{equation*}
\max_t \FF(\partial \Omega_t)-\FF(\partial \Omega^{j,k})
\;\leq\; \lambda^2\FF (\partial \Omega^{j,k}\cap B_{2\rho} (y))
+ \lambda r \cH^{1} (\partial \Omega^{j,k}\cap \partial B_r (y))+  C \rho^3\, .
\end{equation*}
Moreover, by \eqref{are} we can
choose $r\in (\rho, 2\rho)$ which
satisfies:
$$
\cH^{1} (\partial \Omega^{j,k}\cap \partial
B_r (y))\;\leq\;\frac{2\lambda}{\rho} \FF(\partial \Omega^{j,k} \cap B_{2\rho} (y))\, .
$$
Hence, we conclude
\begin{equation}\label{x2}
\max_t \FF (\partial \Omega_t)
\;\leq\; \FF (\partial \Omega^{j,k})
+ 2\lambda^2  \FF (\partial \Omega^{j,k}\cap B_{2\rho} (y))+ C\rho^3\, .
\end{equation}
Furthermore, there exists $k_0$ such
that 
\begin{equation}\label{re3}
\FF (\partial \Omega^{j,k}\cap B_{2\rho} (y))\;\leq\; 2 \FF(V^j \res B_{4\rho} (y))
\;\leq\; 2 \Lambda \|V^j\| (B_{4\rho} (y)) \qquad \mbox{for every $k\geq k_0$.}
\end{equation}
By the non concentration Lemma \ref{nonconc},
\begin{equation}\label{x3}
\|V^j\| (B_{4\rho} (y))\leq C \|V^j\| (M) \alpha^{\log_2(\rho_0/\rho)}\, .
\end{equation}
We recall that $\alpha\in (0,1)$ is the constant of Lemma \ref{nonconc} and it depends just on $F$.
To conclude, we observe that 
\begin{equation}\label{x4}
c\haus^3 (\partial \Omega^{j,k}) - \min_t c\haus^3 (\partial \Omega_t) \leq c\haus^3 (B_{2\rho} (y))\leq  C \rho^3\, .
\end{equation}
Combining \eqref{x2}, \eqref{re3}, \eqref{x3} and \eqref{x4}, we deduce that
\begin{equation}\label{x5}
\max_t \FF^c (\partial \Omega_t)
\;\leq\; \FF^c (\partial \Omega^{j,k})
+ C \|V^j\| (M) \alpha^{\log_2(\rho_0/\rho)} + C \rho^3\, .
\end{equation}
As $\alpha \in (0,1)$, choosing $\rho<\rho_0$ small enough, by \eqref{x5} we conclude \eqref{e:am3bis}, as desired.
\end{proof}

\begin{proof}[Proof of Lemma \ref{l:min}]
Fix $j\in \NN$ and $y\in An$ and
let $B_\rho (y)\subset An$ be the ball given by Lemma
\ref{l:concon}. Repeating verbatim the proof of \cite[Lemma 4.2]{DLTas}, and replacing the use of \cite[Lemma 4.1]{DLTas} with Lemma \ref{l:concon}, one can prove that that $\tilde{\Omega}^j$
minimizes $\FF^c$ in the class $\mathcal{P}
(\tilde{\Omega}^j, B_{\rho/2} (y))$ of the finite perimeter sets equal to $\tilde{\Omega}^j$ outside of $B_{\rho/2} (y)$.

Now, we want to show that  $\partial \tilde{\Omega}^j\res An = V^j \res An$. First we prove that 
\begin{equation}\label{pp}
\lim_{k\to \infty}\FF (\partial \Omega^{j,k})=\FF(\partial \tilde{\Omega}^j).
\end{equation}
Indeed, if this is not the case, then we would
have
$$
\FF^c (\tilde{\Omega}^j \cap B_{\rho/2} (y))
\;<\; \limsup_{k\to \infty} 
\FF^c (\Omega^{j,k}\cap B_{\rho/2} (y))
$$
for some $y\in An$ and some $\rho$ to which we can
apply the conclusion Lemma \ref{l:concon}.
We can then use $\tilde{\Omega}^j$ in place
of $Q$ in the argument of the previous step
to contradict, once again, the minimality of the
sequence $\{\Omega^{j,k}\}_k$.
We apply \cite[Theorem 1, Section 3.4]{CartCurr} to \eqref{pp} to deduce that
\begin{equation}\label{ppp}
\lim_{k\to \infty}\H^2(\partial \Omega^{j,k})=\H^2(\partial \tilde{\Omega}^j).
\end{equation}
Applying \cite[Proposition A.1]{DLTas} to \eqref{ppp}, we deduce that  $\partial \tilde{\Omega}^j\res An = V^j \res An$.

To conclude, we prove the $c$-stability of the surface
$\partial \tilde{\Omega}^j$. Assume by contradiction that there exists a smooth volume preserving vector field $X$  compactly supported in $An$ such that $\delta^2_{\Fc} \tilde{\Omega}^j(X,X)=-C<0$. 
There exists a map $\varphi:t\in \R \mapsto \varphi_t\in C^\infty (An,An)$ solving the following ODE:
\begin{equation*}
\begin{cases}\frac{\partial \varphi_t(x)}{\partial t}=X(\varphi_t(x))&\quad \forall x\in M,\\
\varphi_0(x)=x &\quad \forall x\in M.
\end{cases}
\end{equation*}
We set
$$ \tilde{\Omega}^j_t:=(\varphi_t)(\tilde{\Omega}^j),\qquad \mbox{and} \qquad  \Omega^{j,k}_t:=(\varphi_t)(\Omega^{j,k}).$$
Using the information on the first and second variation, we get that
\begin{equation}
\label{integro1}\Fc(\tilde{\Omega}^j_\e)\leq \Fc(\tilde{\Omega}^j) - C\e^2.
\end{equation}
We can easily define an homotopy $\psi(t,x)=\varphi_{\e t}(x)$ for every $(t,x)\in [0,1]\times An$ and $\psi(t,x)=x$ for every $(t,x)\in [0,1]\times (M\setminus An)$, so that $\tilde{\Omega}^j_\e:=\psi(1,\tilde{\Omega}^j)$. This contradicts the minimality of the sequence $\Omega^{j,k}$ in $\mathcal{E}
(\Omega^j, An)$.
\end{proof}

\begin{proof}[Proof of Lemma \ref{l:concon1}]
The proof is very similar to the one of Lemma \ref{l:concon}.
It is achieved by exhibiting a suitable
homotopy between $\Omega^{j,k}$ and $Q$. 
For simplicity, we will assume the ambient space is $\R^3$, without loss of generality (as this is a local result). Up to translation and rotation, we can assume $y=0$, that the tangent space to $\partial_+ An$ in $0$ is $(e_1,e_2)$, that $\dot \gamma/|\dot \gamma|=e_1$, and $An\subset \{\langle x, e_3\rangle >0\}$. Up to consider $\rho$ small enough, for every $a\in [-\rho,\rho]$ there exists a unique $y_a \in \gamma\cap \{x_1=a\}$.
 The key idea is:
\begin{itemize}
\item[-] First deform $\Omega^{j,k}$ to
the set $\tilde{\Omega}$ which is
the union of $\Omega^{j,k}\setminus (B_\rho (y)\setminus An)$ and the family of 1-dimensional cones $\{C_a\}_{a\in [-\rho,\rho]}$ lying in $\{x_1=a\}$, with vertices $y_a$ and base $\Omega^{j,k}\cap \partial B_\rho (y)\cap An$;
\item[-] Then deform $\tilde{\Omega}$ to $Q$ in a similar way.
\end{itemize}
Using this scheme, the proof becomes an immediate modification of the proof of Lemma \ref{l:concon} and we will omit it.
\end{proof}

\begin{proof}[Proof of Lemma \ref{l:min1}]
We observe that $\partial \Omega^j$ satisfies the assumptions of Lemma \ref{l:concon1} in $B_{\rho_y} (y)$. Let $B_\rho (y)$ be the ball given by Lemma \ref{l:concon1}. We claim that $\tilde{\Omega}^j$ minimizes $\FF^c$ in the class $\mathcal{P}
(\tilde{\Omega}^j, B_{\rho/2} (y)\cap An)$ of the finite perimeter sets equal to $\tilde{\Omega}^j$ outside of $B_{\rho/2} (y)\cap An$. If this was true, by Theorem \ref{boundregt} in Appendix \ref{boundreg}, we would deduce that $\partial \tilde{\Omega}^j\cap B_{\rho/2} (y)\cap An$ is smooth up to the boundary $ \partial \tilde{\Omega}^j\cap \partial_+ An \cap B_{\rho/2} (y)$. 

Let us prove the claim.
Assume, by contradiction, that $Q$ is
a Caccioppoli set with $Q\setminus (B_{\rho/2} (y)\cap An)
= \tilde{\Omega}^j\setminus (B_{\rho/2} (y)\cap An)$ and
\begin{equation}\label{e:competitor}
\FF^c (Q) \;<\; \FF^c (\tilde{\Omega}^j)
-2\eta\, .
\end{equation}
Then there exists $\bar \delta>0$ such that, denoting with $An_{t}:=(1-t)An$, 
\begin{equation}\label{e:competitor1}
\FF^c (Q\cap B_{\rho/2} (y)\cap An_{t}) \;<\; \FF^c (\tilde{\Omega}^j\cap B_{\rho/2} (y)\cap An_{t})-\eta, \qquad \forall t\in (0,\bar \delta)
\, .
\end{equation}
Since the convergence $\ind_{\Omega^{j,k}} \to 
\ind_{\tilde{\Omega}^j}$ is strong in $L^1$, 
there exist
 $\tau\in (\rho/2, \rho)$ and $\delta \in (0,\bar \delta)$ such that (up to subsequences)
\begin{equation}\label{uuu}
\lim_{k\to\infty}
\|\ind_{\tilde{\Omega}^j} - \ind_{\Omega^{j,k}}\|_{L^1
(\partial B_\tau (y)\cup \partial_+  An_\delta)}\;=\; 0\,,
\end{equation}
\begin{equation}\label{uuu1}
\FF^c (\tilde{\Omega}^j\cap B_\tau (y)\cap An_\delta)
\leq \liminf_{k\to\infty}
\FF^c (\Omega^{j,k}\cap B_\tau (y)\cap An_\delta),\quad \mbox{and} \quad \FF^c (\tilde{\Omega}^j)
\leq \liminf_{k\to\infty}
\FF^c (\Omega^{j,k})\, .
\end{equation}
We define 
$$
Q^{j,k}\;=\; (Q \cap B_\tau (y)\cap An_\delta)
\cup (\Omega^{j,k}\setminus (B_\tau (y)\cap An_\delta))\, .
$$
Hence,  we deduce from \eqref{e:competitor1}, \eqref{uuu}
and \eqref{uuu1} that
$$
\limsup_{k\to\infty} [\FF^c (Q^{j,k})
- \FF^c (\Omega^{j,k})]
\;\leq\; -\eta\, .
$$
We fix now $k$ and a compactly supported
kernel $\varphi$, then we define $g_\eps := \ind_{Q^{j,k}}*\varphi_\eps$
and the level set $\Xi_\eps
:= \{g_\eps > t\}$ for some $t\in (\frac{1}{4},\frac{3}{4})$. 
Then $\FF^c (\Xi_\eps)\to\FF^c (Q^{j,k})$ as $\eps\to 0$, see \cite{Giu,MP}.

As $\Xi_\eps$ does not coincide with $\Omega^{j,k}$ outside $B_\rho (y)\setminus An$, we need to modify $\Xi_\eps$.
Hence we choose $(a,b)\subset (\tau, \rho)$ and $(\alpha,\beta)\subset (0,\delta)$ so that $\Sigma^{j,k} := \partial \Omega^{j,k}\cap ((\overline{B_b (y)}
\setminus B_a (y))\cup (\overline{An_\alpha}
\setminus An_\beta))$ is smooth. We also consider
a smooth tubular neighborhood $T$ of $\Sigma^{j,k}$ and the associated
normal coordinates $(\xi,\sigma)$.
As
$Q^{j,k} \setminus (B_\tau (y)\cap An_\delta)
= \Omega^{j,k} \setminus (B_\tau (y)\cap An_\delta)$, then there exists $\eps>0$ and a smooth function $f_\eps:\xi\to f_\eps(\xi)$ such that
$\partial \Xi_\eps\cap ((\overline{B_b (y)}
\setminus B_a (y))\cup (\overline{An_\alpha}
\setminus An_\beta))\subset T$ and $T\cap \Xi_\eps:=\{\sigma<f_\eps (\xi)\}$, and $f_\eps \to 0$ smoothly as $\eps\to 0$, 

Arguing as in the proof of Lemma \ref{l} one can modify $Q^{j,k}$
to a set $\Xi^{j,k}$ satisfying:
\begin{itemize}
\item $\partial \Xi^{j,k}$ is smooth in the complement of a finite set;
\item $\Xi^{j,k}\setminus (B_\rho (y)\setminus An) = \Omega^{j,k}\setminus (B_\rho (y)\setminus An) $;
\item $\limsup_k (\FF^c (\Xi^{j,k}) - \FF^c
(\Omega^{j,k}))\leq -\eta<0$.
\end{itemize} 
Lemma \ref{l:concon1}
implies that $Q^{j,k}\in \mathcal{E}(\Omega^j, An)$ for $k$ sufficiently large,
contradicting the minimality of the sequence
$\Omega^{j,k}$.
\end{proof}

\subsection{Regularity}\label{regsec}
In this section we use Proposition \ref{rep} to get the desired regularity:
\begin{Proposition}\label{p:reg}
Let $V$ be as in Proposition \ref{p:almost1} (and consequently as in Proposition \ref{rep}).
Then $V$ is induced by a non trivial surface $\Sigma$ with multiplicity one, which is smooth, $c$-stable and almost embedded outside of one point $p\in M$.
 \end{Proposition}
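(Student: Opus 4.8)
The strategy is the classical Pitts–Schoen–Simon regularity scheme, adapted to the anisotropic setting via the replacement property of Proposition \ref{rep} and the compactness and curvature estimates of Theorems \ref{T:curvat} and \ref{T:compact}. First I would fix the min-max sequence $\{\Om^j\}$, the limiting varifold $V\in\cV_\infty^c$ and the radius function $r:M\to(0,\infty]$ provided by Proposition \ref{p:almost1}, and I would argue locally: by Remark \ref{metricinG} I can assume the ambient ball sits in $\R^3$ with the flat metric. The core is the \emph{interior regularity at a good point}: for $x\in M$ and small annuli $An\in\An_{r(x)}(x)$, apply Proposition \ref{rep} to get a first replacement $V^*$, which by definition is a smooth $c$-stable (almost embedded, multiplicity one) surface inside $An$ and agrees with $V$ outside $\overline{An}$. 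Then apply Proposition \ref{rep} a second time to $V^*$ in a slightly rotated/overlapping annulus $An'$ (the Remark after Proposition \ref{rep} guarantees this is legitimate), obtaining $V^{**}$ smooth and $c$-stable in $An'$. The two replacements $V^*$ and $V^{**}$ are both smooth away from the interface $\partial An\cap An'$, and the whole regularity question reduces to showing that $V^*$ and $V^{**}$ glue across that interface into a smooth surface, from which one deduces by a covering argument that $V$ itself is smooth and $c$-stable near $x$.

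The gluing at the interface splits into the two cases announced in the strategy section. The surfaces produced by the CMC replacement are multiplicity-one almost embedded, so the interface touching set is either a point of multiplicity one (a genuinely embedded point) or an isolated point of multiplicity two. At a multiplicity-one interface point, I would use the refined construction of the second replacement: because the approximating sequence $\partial\Om^{j,k}$ for the second replacement can be taken regular up to the interface (Lemmas \ref{l:concon1} and \ref{l:min1}, which give boundary minimization and boundary regularity of $\tilde\Omega^j$ against competitors frozen outside $B_\rho(y)\setminus An$), the limit surface is $C^2$ up to the interface; then the boundary curvature estimates of \cite{DDH} (available here because the surface is two-dimensional, as emphasized in the strategy) give uniform $|A|$ bounds, so one passes to the limit and gets $V^*=V^{**}$ smoothly across the interface by unique continuation / the PDE. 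At an isolated multiplicity-two interface point, one first establishes upper density bounds adapting \cite{DDH} (this replaces the missing monotonicity formula), so that blow-ups $W\in TV(x,V)$ exist; using the already-established regularity at the nearby multiplicity-one points together with the maximum principle for the anisotropic CMC/minimal PDE, each such blow-up is a plane with multiplicity two; a graphicality argument (two nearby $C^{2}$ graphs with the same limiting tangent plane, each satisfying the same curvature-type bound) then forces $V^*$ and $V^{**}$ to coincide smoothly across that point as well. Combining the two cases yields that $V$ is induced by a surface $\Sigma$ that is smooth, $c$-stable and almost embedded in $B_{r(x)/2}(x)$ for every $x$, hence smooth away from the (at worst one) center $p$ coming from case (c) of Proposition \ref{p:almost1}; multiplicity one on $\mR(\Si)$ and the fact that $\Sigma$ is a boundary come from Theorem \ref{T:compact}(i) applied to the replacement sequences; non-triviality and the mass bound $\FF(\Sigma)\le 2(m_c(\mathscr L)+c\haus^3(M))$ follow from Proposition \ref{p:isop} and the membership $\partial\Om_{t_k}^k\in\cV$ together with lower semicontinuity.

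\textbf{Main obstacle.} The hard part is the interface gluing in the multiplicity-two case: without a monotonicity formula one cannot directly bound the density from above or identify blow-ups by mass comparison across scales, so one must bootstrap — first get boundary regularity and uniform curvature estimates at multiplicity-one interface points, then leverage those plus the strong maximum principle for the quasilinear anisotropic mean-curvature operator to pin down the multiplicity-two blow-up as a multiplicity-two plane, and finally run a quantitative graphical comparison to rule out a genuine singularity. Establishing the requisite upper density estimate (the anisotropic analogue of \cite[Lemma 4.3]{DDH}, using \cite[Lemma 3.5]{BT} and the stability inequality \eqref{stabineq}) is the key technical input, and it is where the two-dimensionality of $\Sigma$ is essential.
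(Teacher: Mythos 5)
Your proposal follows essentially the same route as the paper's proof: two successive replacements, a case split at the interface between multiplicity-one points (boundary regularity of the approximating minimizers plus the boundary curvature estimates of \cite{DDH}) and isolated multiplicity-two touching points (upper density bound, identification of the blow-up as a plane of multiplicity two via the maximum principle, then graphicality and elliptic unique continuation), followed by the covering argument leaving at most one singular center. The only steps you compress are the choice of the slicing sphere via the $\H^{n-1}$-finiteness of the touching set (Lemma \ref{stimatouchingset}) and the verification that $V$ itself coincides with the glued replacement surface on the inner region (which in the paper needs the transversality Lemma \ref{trasv}, the constancy theorem and the mass identity from Proposition \ref{rep}); both are details internal to the same scheme.
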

 We remark that Proposition \ref{p:reg} implies the validity of Theorem \ref{t:main}.
In order to prove Proposition \ref{p:reg}, we will use some intermediate results that are of independent interest. Since they are also valid in general dimension, we state and prove them in Section \ref{br}.

As a first step towards the proof of Proposition~\ref{p:reg} we have the following:

\begin{Lemma}\label{rect} Let $V\in \cV^\infty_c$. If there exists a positive function $r$ on $M$ such that $V$ has  a replacement in any annulus $\an\in \An_{r(x)} (x)$, then $V$ is integral. Moreover, there exists $C>0$, depending only on $F$ and $c$, such that $\theta_* (x, V)\geq C$ for any $x\in \supp (\|V\|)$.
\end{Lemma}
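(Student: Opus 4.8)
The plan is to exploit the replacement property iteratively, following the classical Pitts–Schoen–Simon strategy for rectifiability, but adapted to the anisotropic setting where we must compensate for the lack of a monotonicity formula. First I would show the density lower bound. Fix $x\in\supp(\|V\|)$ and a small annulus $An=An(x,s,t)\in\An_{r(x)}(x)$. By Proposition \ref{rep}, $V$ has a replacement $\tilde V$ in $An$, and $\tilde V\res An$ is a smooth $c$-stable surface with $\tilde V=V$ outside $\bar{An}$. Since $x\notin An$, we have $\theta_*(x,V)=\theta_*(x,\tilde V)$ for the behavior at $x$ only through $\|V\|\res \overline{B_s(x)}$; the key point is that $\tilde V$, being $c$-stable in $An$ and agreeing with $V$ on $\partial_+An$, must "fill in" $\overline{B_s(x)}$: one argues that $\supp\|\tilde V\|$ cannot avoid the inner sphere $\partial B_s(x)$ for all small $s$ (otherwise $x$ would not be in the support after one replacement, contradicting $\tilde V=V$ near the inner boundary once $s$ is small, or one uses that the replacement preserves the support away from $An$). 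Then, using the curvature estimate of Theorem \ref{T:curvat} together with Remark \ref{palla} (the comparison $H_F^{\partial B_r}\ge 1/(\lambda r)$), one obtains a genuine isoperimetric-type lower bound: a $c$-stable surface passing near $x$ inside a ball $B_\rho(x)$ with $\rho$ small relative to $1/(\lambda c)$ has $\|\tilde V\|(B_\rho(x))\ge C\rho^2$ with $C=C(F,c)$, hence $\theta_*(x,V)=\theta_*(x,\tilde V)\ge C$. This is essentially the argument giving Allard's density lower bound referenced in the strategy section, localized via the replacement.

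Next I would prove integrality. The scheme is: perform a first replacement $V_1$ of $V$ in a well-chosen annulus $An_1\in\An_{r(x)}(x)$, so that $V_1$ is a smooth embedded (hence integral, density one) $c$-stable surface in $An_1$; then, by the Remark following Proposition \ref{rep}, perform a second replacement $V_2$ of $V_1$ in a concentric annulus $An_2$ overlapping $An_1$, so that $V_2$ is smooth in $An_2$ and equals $V_1$ (hence is smooth and integral) on $An_1\setminus \overline{An_2}$. Across the overlap, one shows by a unique-continuation / graphicality argument that $V_1$ and $V_2$ glue to a single smooth surface on $An_1\cup An_2$, so $V$ is integral (indeed density one) on an annular neighborhood of every sphere $\partial B_\rho(x)$ for $\rho<r(x)$. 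Letting the radii vary covers a punctured neighborhood of $x$; since $x$ was arbitrary in $M$ and $\|V\|(M)<\infty$, the singular set is discrete, hence finite, and $V$ is integral. (That there is in fact at most one bad point is the content of the later, harder argument; here we only need integrality.) Finally, rectifiability: once we know $V$ has locally bounded anisotropic first variation (it lies in $\cV^\infty_c$) and satisfies the density lower bound $\theta_*(\cdot,V)\ge C>0$ on its support, we are precisely in the hypotheses of the anisotropic rectifiability theorem \cite[Theorem 1.2]{DDG2}, valid by Remark \ref{rectif} since \eqref{eq:sconv} holds; hence $V$ is rectifiable, and combined with the integer density just established, $V$ is integral.

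The main obstacle I expect is the gluing step for integrality — specifically, showing that two consecutive replacements agree on the overlap of the two annuli well enough to conclude $V$ is a classical surface (not merely a varifold) on a punctured neighborhood. Without a monotonicity formula one cannot argue by mass comparison at different scales, so one must instead use that both $V_1$ and $V_2$ are smooth $c$-stable surfaces solving the same anisotropic prescribed-mean-curvature equation on the overlap, invoke the maximum principle / unique continuation for this quasilinear elliptic equation, and use the curvature bounds from Theorem \ref{T:curvat} to control their relative position. This is exactly where the two-dimensionality and the work in \cite{DDH} referenced in the strategy will be used. The density lower bound step is comparatively routine given Theorem \ref{T:curvat} and Remark \ref{palla}, and the final appeal to \cite[Theorem 1.2]{DDG2} is immediate.
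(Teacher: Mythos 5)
Your density lower bound is essentially the paper's argument: replace $V$ in a thin annulus $\an(x,r,2r)$ with $2r<\lambda/c$, use the maximum principle (via Remark \ref{palla}, i.e. $H_F^{\partial B_\rho}\ge 1/(\lambda\rho)>c$) to show the replacement cannot vanish on the annulus, pick a point of the replacement on the middle sphere $\partial B_{3r/2}(x)$, and extract a quantitative area lower bound in a ball of radius comparable to $r$ around that point, which then sits inside $B_{2r}(x)$. The paper gets the quantitative bound from Allard's lower density estimate for stable surfaces rather than from the curvature estimate of Theorem \ref{T:curvat}, but either route is workable; just be aware that you need the point of the replacement to lie at a \emph{definite} distance from $x$ (on the middle sphere), and that comparing $\|V\|(B_{2r}(x))$ with the mass of the replacement uses the minimizing construction of the replacement. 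The final appeal to \cite[Theorem 1.2]{DDG2} for rectifiability is correct.

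The integrality step, however, has a genuine gap. Your plan is to show that two overlapping replacements $V_1$, $V_2$ glue into a single smooth surface and to conclude that $V$ itself is a smooth, density-one surface away from a discrete set. But a replacement only agrees with $V$ \emph{outside} the annulus in which it is performed; inside, it is a different varifold. Smoothness of $V_1$ in $An_1$ gives no information about the regularity or the density of $V$ in $An_1$, and identifying $V$ with its replacements is precisely the content of the (much harder) regularity argument of Proposition \ref{p:reg}, which itself \emph{uses} this lemma (through Lemma \ref{trasv}, the constancy theorem, and a mass-equality argument). So your route is both circular and far stronger than what is needed. The correct argument, once rectifiability is in hand, is a blow-up: at $\|V\|$-a.e.\ $x$ the tangent cone is a unique plane $\theta\pi$; replace $V$ in the shrinking annuli $\an(x,\rho_n/4,3\rho_n/4)$ and rescale. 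The rescaled replacements are $(c\rho_n)$-stable, so Theorem \ref{T:compact}(ii) makes them converge smoothly \emph{with integer multiplicity} to a stable surface in $\aan(0,1/4,3/4)$; since the replacements agree with $V$ outside the annulus, the limit cone equals $\theta\pi$ on $\B_{1/4}\cup\aan(0,3/4,1)$, and a maximum-principle plus constancy-theorem argument forces the limit to be $\theta\pi$ everywhere with $\theta$ equal to the integer multiplicity of the smooth limit. No gluing of $V_1$ with $V_2$, and no claim that $V$ is smooth, is required.
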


\begin{proof} 
Fix $x\in \supp (\|V\|)$. Given the local nature around $x$ of this lemma, by Remark \ref{metricinG} we will assume without loss of generality that the ambient space is $\R^3$. Fix $r<\frac 1{\sqrt{\Delta}}$, where $\Delta$ is the constant defined in defined in \cite[page 294, Section 1.4]{Allard1983} and depends just on $F$, and $2r<\lambda/c$. Replace $V$ with $V'$ in $\an (x, r, 2r)$. We claim that $\|V'\|$ cannot be identically $0$ on $\an (x,r,2r)$. Assume it was; since $x\in \supp (\|V'\|)$, there would be a $\rho\leq r$ such that $V'$ ``touches'' $\partial B_\rho$ from the interior. More precisely, there would exist $\rho$ and $\e$ such that $\supp \|V'\|\cap \partial B_\rho (x)\neq \emptyset$ and $\supp \|V'\|\cap \an (x, \rho, \rho+\e)=\emptyset$. Moreover $\supp \|V'\res \an (x, \rho, \rho+\e)\|$ is a $(2,c)$ subset of $\an (x, \rho, \rho+\e)$, as proved in \cite[Lemma 4.5]{DKS}. In particular, we deduce from \cite[Proposition 2 (iii)]{DDH} that $H_F^{\partial B_\rho (x)}(p)\leq c$ for every $p\in \supp \|V'\|\cap \partial B_\rho (x)\neq \emptyset$. On the other hand, since $\rho<2r<\lambda /c$, we deduce that $H_F^{\partial B_\rho (x)}> \frac \lambda\rho>c$, which is the desired contradiciton. 
\begin{equation}\label{arg}
\mbox{Thus $V'\res \an (x,r,2r)$ is a non--empty smooth $c$-stable surface $\Sigma$. }
\end{equation}
We fix $y\in \partial B_{3r/2}(x)\cap \Sigma$, which exists because otherwise we could apply the same argument as before to the $c$-stable surface $\Sigma$ in $B_{3r/2}(x)$, contradicting the maximum principle. We apply \cite[page 305, Theorem, Equation (9)]{Allard1983} with $Z=\an (x,r,2r)$ and $M=\Sigma$: the whole set of assumption needed is in \cite[page 294, Section 1.4]{Allard1983} and are trivially satisfied. Since $r<\frac 1{\sqrt{\Delta}}$, we deduce that there exists $C>0$ depending just on $F$ and $c$, such that
$$\haus^2(\Sigma\cap B_{r/2}(y))\geq C(r/2)^2.$$
In particular
$$\frac{\|V\|(B_{2r}(x))}{(2r)^2}\geq \frac{\haus^2(\Sigma\cap B_{r/2}(y))}{(2r)^2}\geq \frac{C(r/2)^2}{(2r)^2}\geq \frac C{16}.$$
Passing to the liminf in $r$, by the arbitrarity of $x$, we deduce that there exists $C>0$, depending only on $F$ and $c$ such that $\theta_* (x, V)\geq C$ for any $x\in \supp (\|V\|)$, as claimed.
The density lower bound, together with $V$ having bounded anisotropic first variation, implies that $V$ is a rectifiable varifold, see \cite[Theorem 1.2]{DDG2} and Remark \ref{rectif}.

We next prove that $V$ is integral. Fix $x\in \supp (\|V\|)$ such that the tangent cone $\C$ is unique and is a plane $\theta \pi$ with constant multiplicity $\theta$. We recall that this is true for $\|V\|$-a.e. $x$, given that $V$ is rectifiable, cf. \cite{Si}. 
We need to prove that $\theta$ is an integer value.

We observe that $\delta_\FF(C)=0$. Consider a sequence $\rho_n\downarrow 0$ such that $(\eta_{x,\rho_n})_\# V \rightharpoonup \C$. Replace $V$ by $V'_n$ in $\an (x, \rho_n/4, 3\rho_n/4)$ and set $W'_n= (\eta_{x,\rho_n})_\# V'_n$. After possibly passing
to a subsequence, we can assume that $W'_n  \rightharpoonup \C'$, where $\C'$ satisfies again $\delta_\FF(\C')=0$. A trivial consequences of the definition of replacements is that
\begin{equation}\label{CC'}
\mbox{$\C'=\C=\theta \pi$ in $\B_{1/4}\cup \aan (0, 3/4, 1)$.}
\end{equation}
Moreover, by definition of replacement and rescaling properties, $W'_n$ are smooth, almost embedded, $(c\rho_n)$-stable surfaces in $\aan (0, 1/4, 3/4)$. Hence, by Theorem \ref{T:compact}(ii) we conclude that $W'_n$ converge locally smoothly (with integer multiplicity) to some smooth embedded stable surface $\Sigma'$ in $\aan (0, 1/4, 3/4)$. 
Since $\delta_\FF(\C')=0$, with the same argument used on $V'$ to get \eqref{arg}, we can show that
\begin{equation}\label{cond}
\mbox{cl}(\Sigma')\cap \partial \B_{3/4} \subset \pi.
\end{equation}
 We claim that $\Sigma'\subset \pi$. Indeed,  assume by contradiction the claim is false. Up to rotation, we can assume $\pi=(e_1,e_2)$. Moreover, without loss of generality we can assume $\Sigma' \cap \{\langle x, e_3\rangle >0\}\neq \emptyset$ (the case where $\Sigma'$ lies below $\pi$ is analogous). 
There exists $\min\{a>0: \Sigma' \cap \{\langle x, e_3\rangle =a\}\neq\emptyset\}$. By the classical maximum principle (cf. for instance [35, Corollary 5.1]):
$$\{\langle x, e_3\rangle =a\}\cap \aan (0, 1/4, 3/4) \subset \Sigma'\cap \aan (0, 1/4, 3/4),$$
 which contradicts \eqref{cond}.
Recalling \eqref{CC'}, this implies that $\supp(\|\C'\|) \subset \pi$.  Since $\delta_\FF(\C')=0$, the Constancy Theorem \cite[Proposition 5]{DDH}, together with \eqref{CC'}, implies that  $\C'=\theta \pi$. Since $\Sigma'$ has integer multiplicity in $\aan (0, 1/4, 3/4)$, we conclude that $\theta$ is integer.
\end{proof}

\begin{proof}[Proof of Proposition~\ref{p:reg}] 
Fix $x\in M$, and consider 
\begin{equation}\label{rhowow}
2\rho:=\frac 12\min\left \{\Inj(M),\lambda/c, r(x),\frac{1}{\lambda(c+\lambda+4\lambda^3)}\right \},
\end{equation}
where $r(x)$ is as in Proposition \ref{p:almost1}.  Consider a replacement $V'$ for $V$ in $\an(x, \rho, 2\rho)$, and let $\Sigma'$ be the $c$-stable surface given by $V'$ in $\an(x, \rho,2\rho)$. Thanks to Lemma \ref{stimatouchingset}, we can choose $t\in ]\rho, 2\rho[$ such that 
\begin{equation}\label{isolato}
\haus^0(\mathcal S(\Sigma')\cap \partial B_t (x))< \infty, \qquad \mbox{and $\Sigma'$ intersects $\partial B_t (x)$ transversally}.
\end{equation}
Fixing $s<\rho$, we consider the replacement $V''$ of $V'$ in $\an(x, s, t)$, which in this annulus coincides with a smooth $c$-stable surface $\Sigma''$. We remark that $V'$ and $V"$ are integral varifolds by Lemma \ref{rect} and the properties of replacements.

Fix a point $y\in \Sigma'\cap \partial B_t (x)$.

\medskip

\noindent

{\bf Step 1}: {\emph{We claim that there exists a sufficiently small radius $r$, so that 
$$
\Sigma''\cap B_{t}(x)\cap B_{r}(y)=\Sigma'\cap B_{t}(x)\cap B_{r}(y).
$$
}}
Given the local nature of Step 1, by Remark \ref{metricinG} we will assume without loss of generality that the ambient space is $\R^3$.
There are just two cases: either $y\in \mathcal R(\Sigma')$ or  $y\in \mathcal S(\Sigma')$. 

{\em Case 1: Fix $y\in \mathcal R(\Sigma')$.} 
Since $y\in \mathcal R(\Sigma')$, there exists a sufficiently small radius $r$, so that $\Sigma'\cap B_r (y)$ 
is a smooth deformation of a disk and $\gamma=
\Sigma'\cap \partial B_t (x)\cap B_r (y)$ is a $C^{2,\alpha}$ embedded curve.
We recall that, by construction of the replacement, $\Sigma'$ is the smooth limit of a locally $\FF^c$-minimizing sequence $\{\partial \Omega_j'\}_{j\in \N}$ in  $\an(x, \rho, 2\rho)$. This implies that, denoting $\gamma_j=
\partial \Omega_j'\cap \partial B_t (x)\cap B_r (y)$, then \(\gamma_{j}\) are uniformly bounded in $C^{2,\alpha}$ and uniformly embedded, namely
\[
\inf_{j} \inf_{\substack{x\ne y\\ \,x,y\in \gamma_j}}\frac{ \d_{\gamma_j}(x,y)}{|x-y|}>0.
\]
Moreover $\Sigma''$ is obtained as a smooth limit in $\an(x, s, t)$ of a sequence $\{\partial \Omega_j''\}_{j\in \N}$.
Furthermore, since $V''$ is $c$-stationary, with the same argument used to prove \eqref{arg} or \eqref{cond}, we get (up to take a smaller radius $r$)
$$
\partial \Omega_j''\cap \partial B_{t}(x) \cap B_r (y) = \gamma_j,
$$
as $t<2\rho<\lambda/c$, then $H_F^{\partial B_{t}}> 2c$. We apply Lemma \ref{l:min1} to $\{\partial \Omega_j''\}_{j\in \N}$ and deduce that $\partial \Omega_j''$ is smooth up to the boundary $\gamma_j$ in $B_{t}(x) \cap B_r (y)$.  Recalling that $\partial \Omega_j''$ is a c-stable surface in $\an(x, s, t)$, we deduce from Theorem \ref{thm:curvature estimates at the boundary} the following uniform boundary curvature estimates:
\begin{equation*}
 \sup_{\substack{p \in B_{\frac{r}{4}}(y) \cap \Sigma'' \\ \d(p, \gamma_j)< r_1}} r_1 |A^{\Sigma_j''}(p)| \le C.
\end{equation*}
This implies that $\{\partial \Omega_j''\}_{j\in \N}$ converge to $\Sigma''$ smoothly up to the boundary $\gamma$. Standard  regularity theory for Elliptic PDEs implies that 
\begin{equation}\label{continuation}
\Sigma''\cap B_{t}(x)\cap B_{\frac{r}{4}}(y)\supset \Sigma'\cap B_{t}(x)\cap B_{\frac{r}{4}}(y).
\end{equation}
In particular $V''\cap B_{\frac{r}{4}}(y)$ is $c$-stable.

{\em Case 2: Fix $y\in \mathcal S(\Sigma')$.} We observe that, by \eqref{isolato}, there exists a sufficiently small radius $r$, so that $ \mathcal S(\Sigma')\cap B_r (y)\cap \partial B_t (x)=\{y\}$. Up to take a smaller $r$, we can assume $\Sigma'\cap \partial B_t (x)\cap B_r (y)=\gamma= \gamma^+\cup \gamma^-$, where $\gamma^+, \gamma^-$ are $C^{2,\alpha}$ embedded curves, which are tangent each other in $y$ and such that $\partial \gamma^\pm  \cap B_r (y) = \emptyset$. 
Moreover, since $V''\in \cV^c_\infty$, then $\de_{\FF}\Sigma''(X)=\int_{\Sigma''}c\langle X, \nu^{\Sigma''} \rangle \, d\haus^2 +\eta_s(X)$, for every $\mathcal X\in X_c(B_r(y))$, where $\eta_s$ is a finite Radon measure supported on $\gamma$ satisfying $\de_{\FF}\Sigma'(X)=\int_{\Sigma'}c\langle X, \nu^{\Sigma'} \rangle \, d\haus^2 -\eta_s(X)$, for every $\mathcal X\in X_c(B_r(y))$. Furthermore $\Sigma''\cap \partial B_t (x)\cap B_r (y)=\gamma$, otherwise we would have $\Sigma''$ touching $\partial B_{t}(x)$ from the inside. This violates the maximum principle, since $t<2\rho<\lambda/c$, which in turn reads $H_F^{\partial B_{t}}> 2c$. Since $H_F^{\partial B_{t}}> 2c$, we deduce by Proposition \ref{noblow} that, up to take a smaller $r$,
\begin{equation}\label{destra}
 \sup_{\substack{ r'< r}} \frac{ \haus^2(\Sigma''\cap B_{r'}(y))}{\pi (r')^2}<\infty. 
\end{equation}
Moreover, since $y\in \mathcal S(\Sigma')$, up to take a smaller $r$, we get that
\begin{equation}\label{sinistra}
 \sup_{\substack{ r'< r}} \frac{ \haus^2(\Sigma'\cap B_{r'}(y))}{\pi (r')^2}\leq 3. 
\end{equation}
Combining \eqref{destra} and \eqref{sinistra}, we deduce that
$$\sup_{\substack{ r'< r}} \frac{ \|V''\|(B_{r'}(y))}{\pi (r')^2}<\infty, \qquad \mbox{which implies} \qquad \theta^*(V'',y)<\infty. $$
We deduce that the family $\{(\eta_{y,r'})_\# V''\}_{r'<r}$ has uniformly bounded mass and consequently $TV(y, V'')\neq \emptyset$. 
Up to rotation, denoting $p=(p_1,p_2,p_3)\in \B_1$, we can assume that $T_y(\partial B_t(x))=e_1^\perp$, that $\eta_{y,r'}(B_t(x))\subset \{p_1 \geq 0\}$ and that $\dot\gamma/|\dot\gamma|=e_2$. 
Since $y\in \mathcal S(\Sigma')$, by \eqref{isolato} we deduce that for every $\C \in TV(y, V'')$, 
\begin{equation}\label{plane}
\C\res \{p_1 \leq 0\}=2T_y\Sigma'\res \{p_1 \leq 0\}, \qquad \mbox{where $T_y\Sigma' \neq e_1^\perp$}.
\end{equation} 
Fix $\C \in TV(y, V'')$ and denote with $\{r_n\}_n$ the sequence of radii such that $W_n:=(\eta_{y,r_n})_\# V''$ converges to $\C$. 
Moreover, for every $\alpha \in (0,1)$, by \eqref{continuation}, we know that there exists $N(\alpha)\in \N$ such that for every $n\geq N$ it holds $W_n\res \aan(0,\alpha,1/\alpha)\supset \eta_{y,r_n}(\Sigma')\cap \aan(0,\alpha,1/\alpha)$ and $W_n$ is a sequence of $(cr_n)$-stable almost embedded smooth surfaces in $\aan(0,\alpha,1/\alpha)$. Since $r_n \downarrow 0$, by Theorem \ref{T:compact}(ii) $W_n\res \aan(0,\alpha,1/\alpha)$ converge smoothly (with integer multiplicity) to $\Sigma^\alpha$, where $\Sigma^\alpha$ is a stable smooth embedded surface. Since $\C\res \aan(0,\alpha,1/\alpha)=\Sigma^\alpha$, by \eqref{plane} we deduce the following inequality in the sense of varifolds
 $$\C\res \aan(0,\alpha,1/\alpha)\geq 2T_y\Sigma' \res \aan(0,\alpha,1/\alpha), \qquad \forall \alpha\in (0,1).$$
and consequently
$$\C\res \{0\}^c \geq 2T_y\Sigma' \res  \{0\}^c.$$
Since from \eqref{sinistra} it holds $\theta^*(\C,y)<\infty$, we deduce that 
\begin{equation}\label{tangent plane}
\C\geq 2T_y\Sigma'.
\end{equation}
Since both $\C$ and $2T_y\Sigma'$ are stationary, we deduce that $\C':=\C-2T_y\Sigma' \subset \{p_1 \geq 0\}$ is stationary, where again the difference is to be intended in the space of varifolds. 
With the same argument used in the proof of Proposition \ref{noblow} to obtain \eqref{rrrr} and \eqref{rrar}, we can prove that $\C'$ is contained in a wedge $L:=\{|p_3|\leq a p_1, \, p_1 \geq 0\}$ for some $a>0$. We claim that $\C'=0$, and consequently that $\C=2T_{y} \Sigma'$. Indeed if by contradiction $\C'\neq 0$, there exists $\bar h:=\min\{h\geq 0: \{p_1 = h\}\cap \spt(\C')\neq 0\}$. By the maximum principle, we deduce that $\{p_1 = \bar h\}\subset \spt(\C')$. But this cannot be true as $\{p_1 = \bar h\}$ is not entirely contained in the wedge $L$. This is the desired contradiction.

In conclusion we have proved that
$$TV (y, V'')=\{2T_{y} \Sigma'\}.$$

To conclude the proof of this case, we borrow some ideas from the Step 2 of the proof of \cite[Proposition 6.3]{CD}.
Let $g: B_r (y)\to \B_1$ be a diffeomorphism such that 
$$
g(y)=0, \qquad g (\partial B_t (x))\subset \{p_1=0\} \qquad \mbox{and} \qquad
g(\Sigma'')\subset \{p_1>0\}\, ,
$$
where we denoted $p=(p_1,p_2,p_3)\in \B_1$. We will also assume that 
$g (\gamma)=\{(0,p_2, v^+(0,p_2))\}\cup \{(0,p_2, v^-(0,p_2))\}$ and
$g (\Sigma'\cap B_r (y))=\{(p_1, p_2, v^+ (p_1, p_2))\}\cup \{(p_1, p_2, v^- (p_1, p_2))\}$ 
where $v^\pm$ are smooth, tangent in $0$ and $Lv^+=-Lv^-=c$. Moreover, by \eqref{continuation}, for every $\alpha>0$ it holds 
\begin{equation}\label{poi}
g (\Sigma'\cap \an(y, \alpha, r))\cap \{p_1>0\}=g (\Sigma''\cap \an(y, \alpha, r)).
\end{equation}
In the following, with an abuse of notation, we identify $\Sigma, \Sigma',\Sigma'',V, V', V''$ respectively with $g (\Sigma),g (\Sigma'),g (\Sigma''),g_\# V,g_\# V',g_\# V''$.

The equality $TV(y, V'')= \{T_{y} \Sigma'\}$ implies that
\begin{equation}\label{e:L1}
\lim_{p\to 0, p\in \Sigma''} 
\frac{|\langle p, \tau\rangle|}{|p|}=0, \qquad \mbox{where $\tau$ denotes the unit normal to the graph of $v^+$}.
\end{equation}
Indeed assume that \eqref{e:L1} fails; 
then there is a sequence $\{p_n\}\subset \Sigma''$ such that 
$p_n \to 0$ and
$|\langle p_n, \tau\rangle |\geq \alpha |p_n|$ 
for some $\alpha>0$. Set $r_n=|p_n|$. There exists a constant 
$\beta \in (0,1)$ such that $\B_{2\beta r_n} (p_n)\cap T_{y} \Sigma'=\emptyset$. Thus 
$\d (T_{y} \Sigma',\B_{\beta r_n} (p_n))\geq \beta r_n$. By \eqref{poi}, $p_n\in \Sigma'\cap \aan(0,\beta r_n, 1)=V''\res \aan(0,\beta r_n, 1)$.
Since $\Sigma'$ is $c$-stable in $\B_1$, also $V''$  is $c$-stable in  $\aan(0,\beta r_n, 1)$. Hence we apply \cite[page 305, Theorem, Equation (9)]{Allard1983} with $Z=\B_1$ and $M=\Sigma'$: the whole set of assumptions needed is in \cite[page 294, Section 1.4]{Allard1983} and are trivially satisfied. Since $\B_{\beta r_n} (p_n) \subset \aan(0,\beta r_n, 1)$, we have the lower density estimate
$$
\|V''\| (\B_{\beta r_n} (p_n))=\haus^2(\Sigma'\cap \B_{\beta r_n} (p_n))\geq C \beta^2 r_n^2.
$$
This contradicts the fact that $2T_{y} \Sigma'$ is the only element of 
$TV(y, V'')$.

We now consider the unit normal vector field $\nu$ to $\Sigma''$ such that 
$\langle \nu, (0,0,1)\rangle\geq 0$.
We claim that 
\begin{equation}\label{e:L2}
\lim_{p\to 0, p\in \Sigma''} \nu (z) \;=\; \tau \, .
\end{equation}
Indeed let $\sigma$ be the plane $\{(0,p_2, p_3), \, p_2, p_3\in \rn{}\}$, 
assume that $p_n\to 0$ and set $r_n:=\d (p_n,\sigma)$, and 
$\Sigma^n := \eta_{p_n,r_n} (\Sigma''_n \cap \B_{r_n} (p_n))$. Each 
$\Sigma^n$ is a $(cr_n)$-stable surface in $\B_1$ with $cr_n\downarrow 0$, and hence by Theorem \ref{T:compact}(ii), after possibly passing to a 
subsequence,  
$\Sigma^n$ converges smoothly (with integer multiplicity) in 
$\B_{1/2}$ to a stable
surface $\Sigma^\infty$. Since by
\eqref{e:L1} $\Sigma^\infty=T_{y}\Sigma'\cap \B_{1/2}$, then $\nu (p_n)$ converges to $\tau$ as desired.

We deduce that there exists $r>0$ and functions 
$w^+,w^-\in C^1(\{p_1\geq 0\})$ satisfying
$$\Sigma''\cap B_r (y)\;=\;\{(p_1, p_2, w^+(p_1, p_2)),
p_1>0\}\cup \{(p_1, p_2, w^-(p_1, p_2)),
p_1>0\}\, ,
$$
and the boundary conditions $v^+(0, p_2)=w^+(0, p_2)$, $v^-(0, p_2)=w^-(0, p_2)$, $\nabla v^+(0, p_2)=\nabla w^+(0, p_2)$, and $\nabla v^-(0, p_2)=\nabla w^-(0, p_2)$. 
In the variables $p_1, p_2, p_3$, the functions $v^\pm$ and
$w^\pm$ satisfy the same second order uniformly 
elliptic equation. Hence, by standard elliptic PDEs theory we deduce that $v^\pm$ and $w^\pm$ are restrictions of two unique smooth functions $u^\pm$.

\medskip

\noindent
{\bf Step 2}: {\emph{ We claim that $V$ is almost embedded $c$-stable surface in $B_{2\rho}(x)\setminus \{x\}$.}} The strategy to prove the claim is similar to the one used in Step 3 of the proof of \cite[Proposition 6.3]{CD}. However several arguments need to be changed.
Given the local nature of this claim, by Remark \ref{metricinG} we will assume without loss of generality that the ambient space is $\R^3$. 
We first observe that:
\begin{equation}\label{e:claims3}
\mbox{$\Sigma' \cap \overline{S} \cap \partial B_t (x)\neq \emptyset$, \quad for every connected component $S$ of $\Sigma''$.}
\end{equation}
Indeed assume there exists $S$ which does not satisfy \eqref{e:claims3}.  
Since $t<2\rho<\lambda/c$ and consequently $H_F^{\partial B_{t}}> 2c$, the maximum principle implies that 
$\overline{S}\cap \partial B_t (x) \neq \emptyset$. Fix $y$ in
$\overline{S}\cap \partial B_t (x)$. 
As $S$ does not satisfy \eqref{e:claims3}, there exists $s>0$ such that
$$
y\in  \supp (\|V''\|)\cap \partial B_t (x) \qquad \mbox{and}\qquad  
(\supp (\|V''\|) \cap B_s (y)) \subset \overline{B_t (x)}\,.
$$
Since $V''\res B_s (y)$ has bounded anisotropic first variation in $B_s (y)$ and $\|\delta_{\FF}(V''\res B_s (y))\|_{sing}=0$ as a varifold in the domain $B_s (y)$, we can apply \cite[Lemma 4.5]{DKS} with $\Omega=B_s (y)$ to conclude that $\supp (\|V''\res B_s (y)\|)$ is a $(2,c)$ subset of $B_s (y)$. In particular, applying \cite[Proposition 2 (iii)]{DDH} with $\Omega=B_s (y)$ and $N= \overline{B_t (x)} \cap B_s (y)$, we deduce that $H_F^{\partial B_t (x)}(y)\leq c$ (one can verify that the proof of \cite[Proposition 2 (iii)]{DDH} works with the weaker assumption that $N$ has smooth boundary just around the point $y$). This contradicts the fact that $H_F^{\partial B_{t}}> 2c$.

We observe that \eqref{e:claims3} implies that for every $r<\rho$, then $\Sigma'$ can be extended to a surface $\Gamma_r$ in $\an(x, r, 2\rho)$ and for every $r_1<r_2< \rho$, then $\Gamma_{r_1}=\Gamma_{r_2}$ in 
$\an (x, r_2, 2\rho)$.
We can now define our candidate $\Sigma:=\bigcup_r \Gamma_r$, which is a $c$-stable surface 
with $\overline{\Sigma}\setminus \Sigma\subset 
(\partial B_{2\rho} (x)\cup\{x\})$. To conclude Step 2, we need to prove that $V$ coincides with $\Sigma$ in $B_\rho (x)\setminus
\{x\}$. Recall that $V=V'$ in $B_\rho (x)$. Given $y\in (\supp (\|V\|))\cap B_\rho (x) \setminus \{x\}$ we define $r= d (y,x)$.
We claim that if $TV(y, V)=\{\theta \pi\}$, with $\theta>0$, and $\pi$ is a plane transversal 
to $\partial B_r (x)$, then $y\in\Sigma$.
Denoting the second replacement $V''$ of $V'$ in $\an (x, r, t)$, it is enough to show that $y\in \mbox{cl}((\supp (\|V''\|))\setminus \overline{B_r
(x)})$, which in turn implies $y\in \overline{\Sigma}_t\subset \Sigma$.
Assume by contradiction $y\not \in \mbox{cl}((\supp (\|V''\|))\setminus \overline{B_r
(x)})$. In particular there exists $a>0$ such that 
\begin{equation}\label{utile}
((\supp (\|V''\|))\setminus \overline{B_r
(x)})\cap B_a(y)= \emptyset.
\end{equation}
Up to rotation we can assume that $\eta_{y,1/n}(B_r (x)) \overset{n\to \infty}{\longrightarrow} \{p_1> 0\}$. Fix an element $\C \in TV(y,V'')$.
Combining \eqref{utile} with the fact that $V''\res B_r (x) =V\res B_r (x)$, we deduce that $\C=\theta \pi \res \{p_1> 0\}$. Since $\delta_\FF(\C)=0$ and $\theta>0$, we get a contradiction with the Constancy Theorem \cite[Proposition 5]{DDH}.

It is now enough to observe that by Lemma~\ref{trasv}, the set of points $y\in B_\rho (x)$ considered above is dense in $\supp (\|V\|)$.
We deduce that
\begin{equation}\label{hm}
(\supp (\|V\|))\cap B_\rho (x)\setminus \{x\}\;\subset\; \Sigma\, .
\end{equation}
By the last property in Proposition \ref{rep}, we deduce that $\FF (\Sigma\cap B_\rho (x))= \FF(V\res B_\rho (x))$, which by \eqref{hm} and the integrality of $V$ implies that $V=\Sigma$ on $B_\rho (x)\setminus \{x\}$.

\medskip

\noindent
{\bf Step 3}: {\emph{ We deduce the global regularity of V.}}

The function $r(x)$ is characterized in Proposition \ref{p:almost1}. In particular we split the proof of this step in three cases:

{\em Case (a): there exists $\Inj(M)/18>R>0$ such that $r(x)\equiv R$ for every $x\in M$ and there exists $y\in M$ such that $\{\Om^k\}$ is a.m. in $M\setminus \overline{B}_{18R}(y)$.
}\\
By the definition of $\rho$ in \eqref{rhowow}, if we denote
$$s:=\frac 12\min\left \{\lambda/c, R, \frac{1}{\lambda(c+\lambda+4\lambda^3)}\right \},$$
we deduce that  $V$ is smooth, almost embedded $c$-stable surface in $B_{s}(x)\setminus \{x\}$, for every $x \in M$. By compactness of $M$, we can extract from $\{B_s(x)\}_{x\in M}$ a finite cover $\{B_s(x_1), \dots, B_s(x_k)\}$ such that for every $i=1,\dots,k$ there exists $j\neq i$ such that $x_i\in B_s(x_j)\setminus \{x_j\}$. 
This implies that $V$ is smooth, almost embedded surface in the whole ambient space $M$. Using the logaritmic cutoff, one can easily verify that $\{x\}$ has zero $2$-capacity in a $2$-dimensional smooth surface. Then, by a simple capacity argument, $V$ is a $c$-stable surface in $B_{s}(x)$, for every $x \in M$. 
Moreover there exists $y\in M$ such that $\{\Om^k\}$ is a.m. in $M\setminus \overline{B}_{18s}(y)$ and since $\partial \Om^k$ converges to $V\in \cV_\infty^c$ and $V$ is smooth everywhere, then $V$ is a $c$-stable surface in $M\setminus \overline{B}_{18s}(y)$. This can be shown using the same argument as at the end of Proof of Lemma \ref{l:min}. We repeat the argument for the sake of exposition:

Assume by contradiction that there exists a smooth volume preserving vector field $X$ compactly supported in $M\setminus \overline{B}_{18s}(y)$ such that $\delta^2_{\FF} {V}(X)=-C<0$. 
There exists a map $\varphi:t\in \R \mapsto \varphi_t\in C^\infty (M\setminus \overline{B}_{18s}(y),M\setminus \overline{B}_{18s}(y))$ solving the following ODE:
\begin{equation*}
\begin{cases}\frac{\partial \varphi_t(x)}{\partial t}=X(\varphi_t(x))&\quad \forall x\in M,\\
\varphi_0(x)=x &\quad \forall x\in M.
\end{cases}
\end{equation*}
We set
$$\Omega^{k}_t:=(\varphi_t)(\Omega^{k}).$$
Since $X$ is a volume preserving vector field, $\varphi_t$ has $C^3$-norm bounded uniformly in $t\in [0,1]$, $\delta_{\FF} {V}(X)=0$, $\delta^2_{\FF} {V}(X)=-C<0$ and $\partial \Omega^k \rightharpoonup V$ as varifolds, we get that there exists $\e>0$ and $k_0\in \N$ such that
\begin{equation*}
\Fc({\Omega}^k_t)\leq \Fc({\Omega}^k) - \frac C2t^2 \qquad  \forall t<\e, \quad \forall k \geq k_0.
\end{equation*}
We can easily define an homotopy $\psi(t,x)=\varphi_{\e t}(x)$ for every $(t,x)\in [0,1]\times (M\setminus \overline{B}_{18s}(y))$ and $\psi(t,x)=x$ for every $(t,x)\in [0,1]\times  \overline{B}_{18s}(y)$. Then, denoting $\tilde{\Omega}_t^k:=\psi(1,{\Omega}^k)$, it is easy to see that $\{\tilde{\Omega}_t^k\}_{t\in [0,1]}$ satisfies all properties \eqref{m1},\eqref{m2},\eqref{m3},\eqref{m4} for $k$ large enough. This contradicts the almost minimality of the sequence $\{\Omega^{k}\}$ in $M\setminus \overline{B}_{18s}(y)$.

We observe that the radius $18s$ should be multiplied by a factor, by the definition of $s$. However the factor is a geometric constant depending just on $F$ and $M$, hence we can omit it without loss of generality.

{\em Case (b): $r(x)\equiv \Inj(M)/18$ for every $x\in M$.} We denote
$$s:=\frac 12\min\left \{\lambda/c,  \Inj(M)/18, \frac{1}{\lambda(c+\lambda+4\lambda^3)}\right \}.$$
With the same argument of Case (a), we can show that $V$ is a smooth, almost embedded surface in the whole ambient space $M$ and that $V$ is a $c$-stable surface in $B_{s}(x)$, for every $x \in M$.

{\em Case (c):  there exists $p\in M$ such that $r(x)=d(x,p)$ for $x\neq p$ and $r(p)=\infty$.}

Again, by the definition of $\rho$ in \eqref{rhowow}, if we denote
$$s(x):=\begin{cases}
                                   \frac 12\min\{\Inj(M),\lambda/c, d(x,p), \frac{1}{\lambda(c+\lambda+4\lambda^3)}\} & \text{if $x\neq p$} \\
                                   \frac 12\min\{\Inj(M),\lambda/c, \frac{1}{\lambda(c+\lambda+4\lambda^3)}\} & \text{if $x=p$} 
                                     \end{cases},$$
we deduce that  $V$ is smooth, almost embedded $c$-stable surface in $B_{s(x)}(x)\setminus \{x\}$, for every $x \in M$.
By compactness of $M$, we can extract from $\{B_s(x)\}_{x\in M\setminus B_{s(p)/2}(p)}\cup \{B_{s(p)}(p)\}$ a finite cover $\{B_{s_1}(x_1), \dots, B_{s_k}(x_k), B_{s(p)}(p)\}$ such that  for every $i=1,\dots,k$ it holds $x_i\neq p$ and there exists $j\neq i$ such that $x_i\in B_{s_j}(x_j)\setminus \{x_j\}$.  
This implies that $V$ is smooth, almost embedded $c$-stable surface in $M\setminus \{p\}$. Notice that the stability is not a consequence of the local stbaility (the second variation is not linear), but of the fact that $r(p)=\infty$.
\end{proof}

\section{Technical Propositions}\label{br}
In this section, we prove some techincal propositions of independet interest. Since they are valid in every dimension, i.e. for hypersurfaces in $(n+1)$-dimensional manifolds, just in this section we will denote with $n$ the dimension of the hypersurface and $n+1$ will be the dimension of the ambient manifold. We remark that the notation introduced in Section \ref{s:prel} easily extends to every dimension.
 \begin{Lemma}\label{stimatouchingset}
Let $M$ be an $(n+1)$ dimensional  $C^2$ manifold. Let $\Sigma \subset M$ be a $C^2$ almost embedded hypersurface in $M$ with $H_F^\Sigma\equiv c$. Then the touching set $\mathcal S(\Sigma )$ is $\haus^{n-1}$-locally finite, i.e., for every compact set $K\subset M$ there exists $C>0$ such that
$$\haus^{n-1}(\mathcal S(\Sigma)\cap K) \leq C.$$
\end{Lemma}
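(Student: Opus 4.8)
The plan is to reduce the statement to a classical stratification/dimension estimate for the singular set of a solution to a uniformly elliptic prescribed-mean-curvature equation. The key observation is that the touching set $\mathcal S(\Sigma)$ is, by definition of almost embeddedness, the set where two embedded sheets of $\Sigma$ meet while lying on one side of each other. Since both sheets satisfy the same second-order uniformly elliptic PDE $H_F^{\,\cdot}\equiv c$ (this is a quasilinear elliptic equation because of \eqref{costper}), at a touching point $p$ the difference $w$ of the two local graphs is a nonnegative (or nonpositive) solution of a linear uniformly elliptic equation obtained by subtracting the two equations and using the mean value theorem on $D_\nu G$ and $D_x G$; the relevant coefficients are bounded by \eqref{costper}. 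Thus $w\ge 0$ solves $L w=0$ with $L$ uniformly elliptic with bounded measurable (in fact $C^{0,\alpha}$, since $\Sigma$ is $C^2$ and $F$ is $C^3$) coefficients, and $\mathcal S(\Sigma)$ locally coincides with $\{w=0\}$.

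The next step is to invoke the structure of the zero set of a nonnegative solution of a second-order uniformly elliptic equation. By the strong maximum principle, on each connected component either $w\equiv 0$ (the two sheets coincide, which is excluded off a relatively closed set we may remove, or else contributes nothing new to the touching set in the sense that it is lower-dimensional after we pass to genuinely distinct sheets) or $w>0$ in the interior and $\{w=0\}$ is contained in the zero set together with the critical set $\{w=0,\ \nabla w=0\}$. Since $w\ge 0$ and vanishes on $\mathcal S(\Sigma)$, Hopf's lemma forces $\nabla w=0$ on $\mathcal S(\Sigma)$ as well, so $\mathcal S(\Sigma)$ is contained in the nodal-and-critical set of a solution of a linear elliptic PDE with Hölder coefficients. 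By the results on the size of such sets — Hardt–Simon type estimates, or more elementarily the Federer dimension-reduction argument applied to the blow-ups of $w$ at points of $\mathcal S(\Sigma)$, which are homogeneous harmonic polynomials vanishing to order $\ge 2$ — the set $\{w=0,\nabla w=0\}$ has Hausdorff dimension at most $n-1$, and in fact is countably $(n-1)$-rectifiable with locally finite $\haus^{n-1}$ measure. Covering $K$ by finitely many coordinate charts in which $\Sigma$ decomposes into finitely many embedded sheets (possible since $\Sigma$ is $C^2$ and almost embedded, hence locally a finite union of graphs) and summing the local bounds yields $\haus^{n-1}(\mathcal S(\Sigma)\cap K)\le C$.

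I would carry this out in the following order: (i) fix $p\in\mathcal S(\Sigma)$, choose a neighborhood $Q$ as in the definition of almost embedding, write the finitely many sheets $\Sigma_1,\dots,\Sigma_m$ as graphs of functions $u_i$ over a common plane, ordered so that $u_1\le\dots\le u_m$; (ii) for each adjacent pair set $w_i=u_{i+1}-u_i\ge 0$ and derive the linear uniformly elliptic equation it solves, recording ellipticity constants from \eqref{costper}; (iii) apply the strong maximum principle and Hopf's lemma to conclude $\mathcal S(\Sigma)\cap Q\subset\bigcup_i\{w_i=0,\nabla w_i=0\}$; (iv) quote (or, via dimension reduction on the homogeneous blow-ups of the $w_i$, establish) the bound $\haus^{n-1}\big(\{w_i=0,\nabla w_i=0\}\big)<\infty$ locally; (v) a finite-covering/compactness argument over $K$ to globalize.

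The main obstacle I anticipate is step (iv): controlling the $(n-1)$-dimensional measure of the nodal-and-critical set of $w$, rather than merely its dimension. A soft dimension bound ($\dim\le n-1$) follows immediately from Federer dimension reduction together with the fact that a nonconstant homogeneous harmonic polynomial has zero set of dimension exactly $n-1$; but upgrading to \emph{locally finite} $\haus^{n-1}$ measure requires either the frequency-function monotonicity of Almgren–Garofalo–Lin (to get a uniform bound on the vanishing order, hence a uniform decomposition of $\{w=0\}$ into a regular part and a lower-dimensional part) or a direct appeal to the Hardt–Simon structure theorem for nodal sets of elliptic equations with smooth coefficients. Since $F\in C^3$ and $\Sigma\in C^2$ the coefficients are $C^{0,\alpha}$, which is on the borderline; I would either invoke a $C^{1,\alpha}$ version after noting $\Sigma$ is in fact $C^{2,\alpha}$ away from $\mathcal S(\Sigma)$ by elliptic regularity, or localize the argument so that only the $C^{0,\alpha}$-regular theory of Han–Lin for nodal sets is needed. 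The remaining steps are routine.
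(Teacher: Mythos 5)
Your skeleton (write the sheets as ordered graphs over a common plane, reduce to the difference $w$ of two adjacent sheets, show $\mathcal S(\Sigma)$ sits inside $\{w=0,\,Dw=0\}$, bound that set, then cover $K$) matches the paper's, but step (ii) is set up with the wrong equation, and this derails step (iv). Because each sheet lies on one side of the other, the unit normals of two touching sheets point in \emph{opposite} directions when both are written as graphs over the common tangent plane; hence $Lu_1=c$ and $Lu_2=-c$, so the difference $u=u_1-u_2$ satisfies the inhomogeneous equation $Lu=2c$, not $Lw=0$. This sign is the whole point of the lemma (and is where the hypothesis $c\neq 0$ enters). Under your homogeneous reading, the strong maximum principle you invoke would force $w\equiv 0$ on any component where the sheets touch, so there would be no nodal-and-critical set left to estimate, and the Hardt--Simon / frequency-function machinery you flag as the main obstacle is both inapplicable and unnecessary.

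With the correct equation the argument becomes elementary and needs no nodal-set theory: at a point of $\{u=0,\,Du=0\}$ the zeroth- and first-order terms of the (linearized) equation vanish, so the second-order part alone must account for $2c\neq 0$, forcing $D^2u\neq 0$ there. Hence some directional derivative $\partial_v(Du)$ is nonzero, and the implicit function theorem applied to that component of $Du$ (which only requires $u\in C^2$, so your concern about $C^{0,\alpha}$ coefficients is moot) exhibits $\{Du=0\}$ locally as a subset of a $C^1$ graph over an $(n-1)$-plane, giving the local bound $\haus^{n-1}(\mathcal S(\Sigma)\cap B_{r'(x)}(x))\leq C_x (r'(x))^{n-1}$ directly. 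The final compactness/covering argument over $K$ is as you describe.
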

\begin{proof}
Let $x \in \mathcal S(\Sigma)$. There exists $r(x)>0$ such that $\Sigma \cap B_{r(x)}(x)$ decomposes as two distinct graphs $u_1\leq u_2$ over the common tangent plane $T_x\Sigma$ (compare with the proof of Theorem \ref{T:compact}). We define $u:=u_1-u_2$ and, since $Lu_1=-Lu_2=c$, then $Lu=2c$. Moreover, by standard regularity for elliptic PDEs, up to shrink $r(x)$, we have that $u, u_1, u_2\in C^2$. We further observe that $\mathcal S(\Sigma)\cap B_{r(x)}(x) \subset \{u=0, Du=0\}$.
Since at every $p \in \{u=0, Du=0\}\cap B_{r(x)}(x)$ the equation $Lu(p)=2c>0$ has just the second order terms, we deduce that $D^2u(x)\neq 0$. In particular, there exists $v\in T_x\Sigma$ such that $\partial_v(Du)(x) \neq 0$. By the implicit function theorem, we deduce that there exists $r'(x)<r(x)$, an $(n-1)$-plane $V$ and $f\in C^1(V, V^\perp \cap T_x\Sigma)$ such that, denoting with $\mathbf{G}_{f}$ the graph of $f$ over $V$, it holds:
$$\{Du=0\}\cap  B_{r'(x)}(x) = \mathbf{G}_{f}\cap B_{r'(x)}(x).$$
Since $f\in C^1$, there exists $C_x>0$ such that
\begin{equation*}
\begin{split}
\haus^{n-1}(\mathcal S(\Sigma)\cap B_{r'(x)}(x)) &\leq \haus^{n-1}(\{u=0, Du=0\}\cap  B_{r'(x)}(x))\\
&\leq  \haus^{n-1}(\{Du=0\}\cap  B_{r'(x)}(x)) \\
&= \haus^{n-1}(\mathbf{G}_{f}\cap B_{r'(x)}(x))\leq C_x(r'(x))^{n-1}.
\end{split}
\end{equation*}
Since $\mathcal S(\Sigma)$ is a closed set in $\Sigma$, then for every compact set $K\subset M$ the set $K\cap \mathcal S(\Sigma)$ is compact. We can extract a finite covering of $K\cap \mathcal S(\Sigma)$ from the family $\{B_{r'(x)}(x)\}_{x\in K\cap \mathcal S(\Sigma)}$ of balls $B_1, \dots B_k$ of radii $r_1,\dots,r_k$ centered in points $x_1,\dots x_k$. Setting $C:=\max\{C_{x_1},\dots, C_{x_k}\}$, we deduce that 
$$\haus^{n-1}(K\cap \mathcal S(\Sigma))\leq \sum_{i=1}^k \haus^{n-1}(\mathcal S(\Sigma)\cap B_i)\leq \sum_{i=1}^k C r_i^{n-1}=:C.$$
This concludes the proof.
\end{proof}

\begin{Remark}
Given the local nature of Proposition \ref{noblow}, Proposition \ref{noblow}, Theorem \ref{thm:curvature estimates at the boundary}, Lemma \ref{trasv}, Lemma \ref{nonconc} and Lemma \ref{lem:blowup2}, by Remark \ref{metricinG} we can replace $\R^{n+1}$ with a $C^3$ $(n+1)$-dimensional manifold $M$.
\end{Remark}

\begin{Proposition}\label{noblow}
Let $\Omega \subset \R^{n+1}$ such that $\partial \Omega \cap \overline{B_{2R}}$ is $C^{3}$ and satisfies $H_F^{\partial \Omega}> 2c$ in \(B_{2R}\).  Let $\Gamma$ be a $C^{2,\alpha}$ embedded $(n-1)$-submanifolds in $\partial \Omega \cap B_{2R}$ with $\partial \Gamma^\pm  \cap B_{2R} = \emptyset$. Let $\Si$ be a $C^2$ almost embedded manifold in $\Omega$ such that 
\begin{equation}\label{bb01}
\de_{\FF}\Sigma(X)=\int_{\Sigma}c\langle X, \nu^\Sigma \rangle \, d\haus^2 +\eta_s(X), \qquad \forall \mathcal X\in X_c(B_R),
\end{equation}
where $\eta_s$ is a finite Radon measure supported on $\Gamma$.
Then there exists a constant $C$ and a radius $r_0>0$ depending only on $F, \Omega, x, \Gamma$  such that
$$ \sup_{\substack{ r< r_0}} \frac{ \haus^n(\Si\cap B_r(x))}{w_n r^n} \le C < \infty. 
$$
\end{Proposition}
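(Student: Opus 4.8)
The goal is a uniform upper bound for the density ratios of $\Sigma$ at $x$ at small scales, knowing that $\Sigma$ has anisotropic mean curvature $c$ away from $\Gamma$, is constrained to lie in $\overline\Omega$, and that $\partial\Omega$ has anisotropic mean curvature strictly bigger than $2c$ near $x$. The natural approach is a comparison/reflection argument: push $\Sigma$ away from $\partial\Omega$ by the flow of a vector field transversal to $\partial\Omega$, thereby decoupling $\Sigma$ from the obstacle, and then reflect $\Sigma$ (together with its singular measure $\eta_s$ supported on $\Gamma$) across $\partial\Omega$ to produce a varifold in a full ball whose first variation is controlled by a genuine (locally bounded, in fact $L^\infty$ with respect to mass) generalized mean curvature. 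Once we are in that situation, the monotonicity formula for varifolds with bounded generalized mean curvature (Allard / Simon) gives the density bound. This is exactly the scheme used in \cite[Proposition 6.3]{CD} and in \cite{DDH}; the anisotropic character of the first variation is harmless here because the obstacle condition $H_F^{\partial\Omega}>2c$ enters only through the classical maximum principle (cf.\ \cite[Proposition 2 (iii)]{DDH}), not through any monotonicity.

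\textbf{Step 1: straighten the obstacle and the curve.} Up to Remark \ref{metricinG} we work in $\R^{n+1}$. Choose coordinates so that $x=0$, $T_0\partial\Omega=\{p_1=0\}$, $\Omega\subset\{p_1\ge 0\}$ locally, and write $\partial\Omega\cap B_{2R}$ as a $C^3$ graph $\{p_1=\psi(p')\}$ with $\psi(0)=0$, $D\psi(0)=0$. Straighten by the diffeomorphism $g(p)=(p_1-\psi(p'),p')$; this changes $F$ into another elliptic integrand (Remark \ref{metricinG}) and turns $\partial\Omega$ into a piece of the hyperplane $\{q_1=0\}$, with $\Omega$ locally $\{q_1\ge 0\}$. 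The curve $\Gamma$ becomes a $C^{2,\alpha}$ submanifold $\tilde\Gamma\subset\{q_1=0\}$, and $\eta_s$ becomes a finite Radon measure on $\tilde\Gamma$. The relation \eqref{bb01} is preserved in the form $\delta_{\FF}\tilde\Sigma(X)=\int_{\tilde\Sigma} c\langle X,\nu\rangle+\tilde\eta_s(X)$ for $X\in\mathcal X_c(B_{R'})$, with $R'$ a fixed fraction of $R$.

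\textbf{Step 2: reflect across $\{q_1=0\}$.} Let $\sigma(q)=(-q_1,q')$ be the reflection, and set $W:=\tilde\Sigma + \sigma_\#\tilde\Sigma$ as varifolds in $B_{R'}$. Since $\tilde\Sigma\subset\{q_1\ge 0\}$, $W$ is supported in $B_{R'}$ and $\|W\|(B_r(0))=2\haus^n(\tilde\Sigma\cap B_r(0))$. Testing the first variation of $W$ against an arbitrary $X\in\mathcal X_c(B_{R'})$: writing $X=X^{\mathrm{even}}+X^{\mathrm{odd}}$ with respect to $\sigma$, the odd part contributes nothing to $\delta W$ by symmetry, while the even part is tangent to $\{q_1=0\}$ along that hyperplane. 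Pairing against the even part, the anisotropic first variation of $\tilde\Sigma$ and of its reflection combine; the term $\int c\langle X,\nu\rangle$ is bounded by $c\|X\|_\infty\|W\|(B_{R'})$, and crucially the two copies of $\tilde\eta_s$ — supported on $\tilde\Gamma\subset\{q_1=0\}$ where $X^{\mathrm{even}}$ is tangential — \emph{do not cancel} but they add up to a single tangential contribution that, by the structure of \eqref{bb01} (the same $\tilde\eta_s$ appears with both signs depending on which side one looks), telescopes to zero against $X^{\mathrm{even}}$. This is the one delicate point and is treated exactly as in \cite[Proposition 6.3]{CD}: after reflection the singular "boundary" terms of $\tilde\Sigma$ and $\sigma_\#\tilde\Sigma$ cancel because $\nu^{\tilde\Sigma}$ and $\nu^{\sigma_\#\tilde\Sigma}$ are opposite along $\{q_1=0\}$. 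Hence $|\delta W(X)|\le C\|X\|_\infty\,\|W\|(B_{R'})$ for all $X\in\mathcal X_c(B_{R'})$, i.e.\ $W$ has locally bounded first variation with generalized mean curvature bounded by a constant $C=C(F,c)$; moreover since $G$ is elliptic the anisotropic and the Euclidean first variations are comparable (the multiplicative and additive errors are controlled by $\lambda$ via \eqref{costper}, cf.\ the identity in Remark \ref{duality}), so $W$ has $|\delta W|\le C\|W\|$ in the usual isotropic sense.

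\textbf{Step 3: monotonicity.} Apply the monotonicity formula for varifolds with generalized mean curvature bounded by $C$ in $B_{R'}$ (e.g.\ \cite[Chapter 4]{Si} or \cite{All}): the function $r\mapsto e^{Cr}\,\|W\|(B_r(0))/(\omega_n r^n)$ is non-decreasing on $(0,r_0)$ for $r_0$ a fixed fraction of $R'$. Therefore
$$
\sup_{0<r<r_0}\frac{\|W\|(B_r(0))}{\omega_n r^n}\ \le\ e^{Cr_0}\,\frac{\|W\|(B_{r_0}(0))}{\omega_n r_0^n}\ =:\ C'<\infty .
$$
Dividing by $2$ and undoing the diffeomorphism $g$ (which distorts balls and $n$-dimensional measure by bounded factors, absorbed into the constant), we obtain $\sup_{0<r<r_0}\haus^n(\Sigma\cap B_r(x))/(\omega_n r^n)\le C$ with $C,r_0$ depending only on $F,\Omega,x,\Gamma$, as claimed.

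\textbf{Expected main obstacle.} The only genuinely nontrivial point is Step 2: verifying that after reflection across the straightened $\partial\Omega$ the singular contributions $\tilde\eta_s$ carried by $\Gamma$ cancel (or combine into something controlled), and that the obstacle side of $\tilde\Sigma$ produces no extra uncontrolled boundary term along $\{q_1=0\}$ — equivalently, that the push-away flow can be performed without increasing the density ratios beyond a fixed multiplicative constant. This uses the maximum principle with $H_F^{\partial\Omega}>2c$ precisely as in the derivation of \eqref{cond} and \eqref{e:claims3} above, together with the structure of \eqref{bb01}; everything else (straightening, ellipticity comparison, monotonicity) is routine.
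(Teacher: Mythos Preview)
Your approach has a genuine gap at Step~2, and it is precisely the obstruction that makes the anisotropic theory hard. The claim that ``since $G$ is elliptic the anisotropic and the Euclidean first variations are comparable'' is false: for a smooth hypersurface the anisotropic mean curvature is $H_F=\trace(D_\nu^2G\cdot A)+\text{l.o.t.}$, and a bound on $H_F$ does not bound $H=\trace(A)$ (take $A=\mathrm{diag}(a,b)$ and $D_\nu^2G=\mathrm{diag}(1,2)$: $H_F=a+2b$ can vanish while $H=a+b$ is arbitrary). Hence the reflected varifold $W$ need not have bounded isotropic first variation, and Allard's monotonicity formula is unavailable. This is not a technicality: the paper explicitly flags the lack of a monotonicity formula for anisotropic integrands \cite{Allardratio} as the central difficulty, and the whole point of Proposition~\ref{noblow} is to obtain a density bound \emph{without} one. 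The reflection step has a second problem: $F$ is not invariant under $\sigma$, so $\delta_\FF(\sigma_\#\tilde\Sigma)$ is the first variation with respect to the reflected integrand $F\circ\sigma$, not $F$; the two pieces of $W$ satisfy different equations and the claimed cancellation of the singular terms $\tilde\eta_s$ along $\{q_1=0\}$ does not follow from \eqref{bb01}, which only says $\eta_s$ is a finite Radon measure with no conormal structure.

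The paper's proof is completely different and does not use monotonicity at all. It defers to \cite[Proposition~6]{DDH}, whose argument (sketched here in the proof of Proposition~\ref{noblow2}) is a barrier/blow-up contradiction: assume the density ratios blow up along a sequence $r_k\to 0$; on half-balls sitting inside $\partial\Omega$ solve $Lu_s=s$ with boundary data given by the graph function of $\partial\Omega$, and use the strong maximum principle (this is where $H_F^{\partial\Omega}>2c$ and the weakened hypothesis \eqref{bb01} enter) to show the graphs of $u_s$ never touch $\Sigma$ for $s\ge \tfrac12 h_{\min}$; the Hopf boundary point lemma then forces $\Sigma$ into a fixed wedge over $T_x\Gamma$. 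After rescaling by $r_k$, the surfaces are trapped in a wedge yet have density ratios tending to infinity, which yields the contradiction. The role of \eqref{bb01} is only to justify the touching argument via the maximum principle, exactly as in the derivation of \eqref{cond} and \eqref{e:claims3}; no reflection and no monotonicity are involved.
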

\begin{proof}
The proof is identical to the proof of \cite[Proposition 6]{DDH}. Indeed the statement of Proposition \ref{noblow} differs from the one of  \cite[Proposition 6]{DDH} just in the assumption \eqref{bb01}, which is different from the corresponding one in \cite{DDH}, i.e. $\de_{\FF}\Sigma=0$ and $\partial \Si \cap B_R = \Gamma$. However one can easily verify by the maximum principle that \eqref{bb01} is enough for the proof.
\end{proof}

\begin{Proposition}\label{noblow2}
Let $\Omega \subset \R^{n+1}$ such that $\partial \Omega \cap \overline{B_{2R}}$ is $C^{3}$ and satisfies $H_F^{\partial \Omega}> 2c$ in \(B_{2R}\).  Let $\Gamma= \Gamma^+\cup \Gamma^-$, where $\Gamma^+, \Gamma^-$ are $C^{2,\alpha}$ embedded $(n-1)$-submanifolds in $\partial \Omega \cap B_{2R}$, which are tangent each other in $x\in \Gamma^+\cap \Gamma^-$ and such that $\partial \Gamma^\pm  \cap B_{2R} = \emptyset$. Furthermore, let $\Si$ be a $C^2$ almost embedded manifold in $\Omega$ such that 
\begin{equation}\label{bb0}
\de_{\FF}\Sigma(X)=\int_{\Sigma}c\langle X, \nu^\Sigma \rangle \, d\haus^2 +\eta_s(X), \qquad \forall \mathcal X\in X_c(B_R),
\end{equation}
where $\eta_s$ is a finite Radon measure supported on $\Gamma$.
Then there exists a constant $C$ and a radius $r_0>0$ depending only on $F, \Omega, x, \Gamma$  such that
\begin{equation}\label{eq:blowup} \sup_{\substack{ r< r_0}} \frac{ \haus^n(\Si\cap B_r(x))}{w_n r^n} \le C < \infty. \end{equation}
\end{Proposition}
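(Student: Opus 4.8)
The plan is to repeat the argument of Proposition \ref{noblow}, and hence ultimately of \cite[Proposition 6]{DDH}, observing that the only property of the boundary datum $\Gamma$ that enters the proof is the $(n-1)$-dimensional upper density bound $\haus^{n-1}(\Gamma\cap B_\rho(x))\le C\rho^{\,n-1}$ for $\rho<r_0$, together with the companion bound $\eta_s(B_\rho(x))\le C\rho^{\,n-1}$ for the boundary measure; and both of these survive the passage from a single embedded $\Gamma$ to the union $\Gamma=\Gamma^+\cup\Gamma^-$. Indeed, since $\Gamma^+$ and $\Gamma^-$ are each $C^{2,\alpha}$ embedded $(n-1)$-submanifolds through $x$, one has
\[
\haus^{n-1}\big((\Gamma^+\cup\Gamma^-)\cap B_\rho(x)\big)\le \haus^{n-1}(\Gamma^+\cap B_\rho(x))+\haus^{n-1}(\Gamma^-\cap B_\rho(x))\le C\rho^{\,n-1},
\]
and similarly for $\eta_s$, which is supported on $\Gamma^+\cup\Gamma^-$ with $\haus^{n-1}$-density bounded by the ellipticity constant (being assembled from the conormals of $\Sigma$ along the smooth pieces of $\Gamma$); in particular $\eta_s$ has no atom at $x$. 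Thus the tangency of $\Gamma^+$ and $\Gamma^-$ at $x$ — the only feature distinguishing the present statement from Proposition \ref{noblow} — is invisible to the estimate.

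Concretely, I would carry out the following steps. \textbf{(1)} Test the first variation identity \eqref{bb0} against truncated radial vector fields $X(y)=\zeta(|y-x|)\,(y-x)$ centered at $x$; using the ellipticity bounds \eqref{costper} for $G$ one obtains, exactly as in \cite[Proposition 6]{DDH}, an almost-monotonicity inequality for $\rho\mapsto\rho^{-n}\haus^n(\Sigma\cap B_\rho(x))$ whose error terms are controlled by the mean curvature contribution $c\,\rho\cdot\haus^n(\Sigma\cap B_\rho(x))$ and by the boundary contribution $\eta_s(B_\rho(x))\le C\rho^{\,n-1}$. \textbf{(2)} Integrate this inequality over $(\rho,r_0)$ and absorb the lower order terms, choosing $r_0$ small depending on $F,\Omega,x,\Gamma$; this yields \eqref{eq:blowup}. \textbf{(3)} The one point requiring more than the first variation is to ensure that the comparison competitors implicit in step (1) stay inside $\Omega$ and touch $\partial\Omega$ only along $\Gamma$: this is where $H_F^{\partial\Omega}>2c$ is used, via the maximum principle, in the same way as in Proposition \ref{noblow} (compare the use of \cite[Lemma 4.5]{DKS} and \cite[Proposition 2 (iii)]{DDH} in the proof of Lemma \ref{rect}). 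Since $\Sigma$ has interior anisotropic mean curvature $c<\tfrac12 H_F^{\partial\Omega}$, it cannot touch $\partial\Omega$ from inside except along the prescribed $\Gamma$, so no extra boundary mass is created near $x$.

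I expect the main obstacle to be essentially bookkeeping rather than conceptual: near the tangency point $x$ one cannot straighten $\partial\Omega$ and both sheets $\Gamma^+,\Gamma^-$ simultaneously by a single diffeomorphism onto a half-space with flat boundary, which is the convenient model used in \cite{DDH}. I would deal with this by keeping the ambient geometry curved and working in geodesic normal coordinates at $x$: since $\partial\Omega$ is $C^3$ and $\Gamma^\pm$ are $C^{2,\alpha}$ with uniformly bounded second fundamental forms, the flattening errors are $O(\rho)$ relative to the leading terms in the monotonicity inequality of step (1), and hence get absorbed into $C$ and into the smallness of $r_0$ exactly as the anisotropy errors already are. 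With this in hand the remainder is a verbatim transcription of the proof of Proposition \ref{noblow}.
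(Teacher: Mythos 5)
Your step (1) contains the fatal gap: for a general elliptic integrand there is \emph{no} almost-monotonicity inequality for $\rho\mapsto \rho^{-n}\haus^n(\Sigma\cap B_\rho(x))$ obtainable by testing the first variation against radial vector fields. By Allard's characterization of the area integrand \cite{Allardratio}, monotonicity of the mass ratio for stationary varifolds forces $F$ to be the area; for a genuinely anisotropic $G$ the term $\bigl(G\,\Id-\nu\otimes D_\nu G\bigr):DX$ with $X=\zeta(|y-x|)(y-x)$ does not reorganize into the derivative of the mass ratio plus a sign-definite remainder, and the best one extracts from the ellipticity bounds \eqref{costper} is the doubling estimate of Lemma \ref{nonconc}, i.e.\ $\|V\|(B_r)\le \alpha\|V\|(B_{2r})$ with some $\alpha\in(0,1)$ depending on $\lambda$. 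Iterating this gives $\|V\|(B_r)\lesssim r^{\beta}$ for a small exponent $\beta=\log_2(1/\alpha)$, which is far from the required $r^n$ upper bound. This absence of monotonicity is precisely the obstruction the paper emphasizes throughout, and it is why the upper density estimate cannot be ``a verbatim transcription'' of an interior monotonicity argument with a boundary error term $\eta_s(B_\rho(x))\le C\rho^{n-1}$. Your step (3) is also off target: there are no comparison competitors in a first-variation computation, and the hypothesis $H_F^{\partial\Omega}>2c$ is not used to confine competitors.

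The actual proof of Proposition \ref{noblow2} (following \cite[Proposition 6]{DDH}) is a contradiction argument via the area blow-up set. One assumes sequences $\Sigma_k$, $r_k\to 0$ with $\haus^n(\Sigma_k\cap B_{r_k})/r_k^n>k$, projects $\Gamma^\pm$ to curves $\gamma^\pm$ in $\{x_{n+1}=0\}$ with opposite normals $\hat\nu^\pm(0)=\pm e_n$ at the tangency point, places interior tangent balls $B^\pm=B^n_{r_0}(r_0\hat\nu^\pm(0))$ on opposite sides, and solves the anisotropic equations $Lu^\pm_s=s$ on $B^\pm$ with boundary datum $\Phi$ (the graph of $\partial\Omega$). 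Sliding in $s$ and using the interior maximum principle (here is where $H_F^{\partial\Omega}>2c$ and $H_F^\Sigma=c$ enter) shows these graphs never touch $\Sigma_k$ in the interior; the Hopf boundary point lemma then yields the uniform gradient gap \eqref{rrrr}, and after blowing up at scale $r_k$ the barriers converge to linear functions $a^\pm x_n$ with $a^+,-a^->c_H$, trapping the area blow-up set in a wedge with an edge; the maximum principle for $(n,c)$-sets from \cite{DDH} then forces the blow-up set to be empty, a contradiction. The only adaptation required by the tangency of $\Gamma^+$ and $\Gamma^-$ is the two-sided placement of the tangent balls, not a bookkeeping of $\haus^{n-1}$-densities. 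So the route you propose would fail at its first step, and the feature you dismiss as ``invisible to the estimate'' is exactly where the paper's modification of \cite[Proposition 6]{DDH} lives.
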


\begin{proof}
The proof is analogous to the proof of \cite[Proposition 6]{DDH}. Indeed the statement of Proposition \ref{noblow2} differs from the one of  \cite[Proposition 6]{DDH} in two assumptions. The first difference is \eqref{bb01}, which is weaker than the corresponding assumption in \cite{DDH} $\partial \Si \cap B_R = \Gamma$. However one can verify that \eqref{bb01} is enough for the proof. The second difference is that here $\Gamma$ is the union of two touching curves $\Gamma^+, \Gamma^-$, while in \cite{DDH} $\Gamma$ is a single curve. However, one can slightly modify the proof of \cite[Proposition 6]{DDH} as follows. 

Assume the conclusion \ref{eq:blowup} does not hold, then there exists a sequence $\Si_k, r_k$ satisfying
$$
0<r_k< \frac{1}{k} \qquad \mbox{and} \qquad \frac{\haus^n(\Si_k\cap B_{r_k}(0))}{r_k^n} > k. 
$$
Let us denote $\gamma^+, \gamma^-$ the projection of respectively $\Gamma^+, \Gamma^-$ onto $\{ x_{n+1} = 0 \}$. Up to subsequences, and performing if necessary a rotation of $B_2$, the normals $\hat{\nu}^+(0)$ and $\hat{\nu}^-(0)$ of respectively $\gamma^+$ and $\gamma^-$ in the plane $\{x_{n+1} =0 \}$ at $0$ satisfy
\begin{equation}\label{normal}
\hat{\nu}^+(0) = e_{n}, \qquad \mbox{and} \qquad \hat{\nu}^-(0) = - e_{n}
\end{equation}
We can choose  $r_0>0$ such that for all $k \in \N$ we have 
$$r_0 < \min\left\{\frac{1}{\norm{A^{\gamma^+} }_{\infty}},\frac{1}{\norm{A^{\gamma^-} }_{\infty}},\frac12\right\},$$
 where $A^{\gamma^\pm}$ is the second fundamental form of $\gamma^\pm$. 
Defining
\[ B^{\pm} := B^n_{r_0}( r_0\hat{\nu}^\pm(0)) \subset \{x_{n+1} =0 \}, \]
our choice of $r_0$ implies that $\overline{B^\pm} \cap  (\gamma^+\cup \gamma^-) = \{0\}$.
 For each $0\le s \le \delta$, using \cite[Assumption 1]{DDH} and the notation therein (in particular the definition of $\Phi$), we can consider the unique solutions $u^\pm_{s} \in C^{2,\alpha}(B^\pm)$ of the following problems 
\[ \left\{\begin{aligned}
L u^\pm_{s} &= s &&\text{ in } B^\pm \\
u_{s}^\pm &= \Phi &&\text{ on } \partial B^\pm \, .
\end{aligned}\right.\]

From the Hopf-maximum principle, it follows that if $s> h_{max}$ then $u^\pm_{s} < \Phi$ and if $s< h_{min}$ then $u^\pm_{s} > \Phi$. We observe that the graphs of $u^\pm_{s}$ never touch $\Si_k$ in the interior of $B^\pm \times \R$ when $s \ge \frac12 h_{min}$. Indeed, if $s> h_{max}$ this easily follows by $\Si_k \subset \Omega$. Hence we can assume without loss of generality that there exists a first (the biggest) $\frac12 h_{min} < s\le h_{max}$ where the graph $u^+_{s}$ touches $\Si_k$ at a point $q=(y,u^+_{s}(y))$. Then $\Si_k$ is the graph of a map $f_k$ over the plane $\{x_{n+1} = 0 \}$ in a neighborhood of $y$. Since $\Si_k$ is $c$-stationary, we have $L(f_k)=c<\frac 12 h_{min}<s$, which contradicts the maximum principle, as $u^+_s\leq f_k$. One can argue analogously for $u^-_{s}$.

We now set $
u^\pm:=u^\pm_{\frac12 h_{min}}$.
 Hopf boundary point lemma allows us to compare $\Phi$ with $u^\pm$ at $0$. We deduce that there exists $c_H>0$ depending only on  $F$ and $\partial \Omega$ such that  
\begin{equation}\label{rrrr} \min\left\{ \frac{\partial u^+(0)}{\partial \hat{\nu}^+(0)} - \frac{\partial \Phi(0)}{\partial \hat{\nu}^+(0)}, \frac{\partial u^-(0)}{\partial \hat{\nu}^-(0)} - \frac{\partial \Phi(0)}{\partial \hat{\nu}^-(0)} \right\} > c_H. \end{equation}
Now we consider the blow-up sequences
$$
\Si_k':= \frac{1}{r_k}\Si_k, \qquad  \Omega_k:= \frac{1}{r_k}\Omega, \qquad \Gamma_k^\pm= \frac{1}{r_k}\Gamma^\pm, \qquad \Gamma_k=\Gamma_k^+\cup \Gamma_k^-, \qquad  \gamma^\pm_k = \frac{1}{r_k}\gamma^\pm,$$
$$
 d_k^\pm:x \in \frac{1}{r_k}B^\pm \to \frac{u^\pm(r_k x) - \Phi(r_k x)}{r_k} .
$$
Using \eqref{normal}, \eqref{rrrr}, since $\frac{1}{r_k}B^\pm \to \R^n \cap \{ \pm x_n \ge 0 \}$, and $d^\pm_k=0$ on $\partial (\frac{1}{r_k}B^\pm)$, we conclude that
$$\mbox{$\Omega_k \to \{ x_{n+1} \ge 0\}$; \qquad  $\gamma_k^\pm \to \{ x_n =0 \}$;}$$
\begin{equation}\label{rrar}
\mbox{$d^\pm_k(x) \to a^\pm x_n$ for every $x \in \R^n \cap \{ \pm x_n \ge 0 \}$ for some $a^+, -a^->c_H$.}
\end{equation}
One can now conclude the proof analogously as in \cite[Proposition 6]{DDH}.

\end{proof}

\begin{Theorem}\label{thm:curvature estimates at the boundary} Let $\Omega \subset \R^{3}$ s.t. $\partial \Omega \cap \overline{B_{2R}}$ is $C^{3}$ and satisfies $H_F^{\partial \Omega}> 2c$ in \(B_{2R}\).  Let $\Gamma$ be a $C^{2,\alpha}$ embedded curve in $\partial \Omega \cap B_{2R}$  with $\partial \Gamma \cap  B_{2R} = \emptyset$. Furthermore let $\Si$ be a $c$-stable, $C^2$ regular surface in $\Omega$ such that $\partial \Si \cap B_R = \Gamma$. Then there exists a constant $C>0$ and a radius $r_1>0$ depending only on $F, \Omega, \Gamma$  such that 
\begin{equation*}
 \sup_{\substack{p \in B_{\frac{R}{2}} \cap \Omega \\ \d(p, \Gamma)< r_1}} r_1 |A(p)| \le C.
\end{equation*}
Moreover the constants \(C\) and \(r_1\)  are uniform as long as \(\Omega\), \(\Gamma \) and \(F\) vary in compact classes.
\end{Theorem}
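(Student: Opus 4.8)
\textbf{Plan.} The plan is to prove the boundary curvature estimate by a blow-up/contradiction argument, exactly in the spirit of the interior estimates of Schoen--Simon--Allard and their boundary analogues. Suppose the conclusion fails: then there are sequences $\Omega_k, \Gamma_k, F_k$ in the relevant compact classes, $c$-stable surfaces $\Sigma_k$ in $\Omega_k$ with $\partial\Sigma_k\cap B_R=\Gamma_k$, points $p_k\in B_{R/2}\cap\Omega_k$ with $\d(p_k,\Gamma_k)<1/k$, and
\[
\mathcal Q_k:=\sup_{\substack{p\in B_{R/2}\cap\Omega_k\\ \d(p,\Gamma_k)<1/k}}\d(p,\Gamma_k)\,|A^{\Sigma_k}(p)|\to\infty .
\]
First I would run a point-selection (Simon-type) argument: choose $q_k$ realizing (up to a factor) the supremum of the weighted curvature on a slightly smaller ball, set $\rho_k=|A^{\Sigma_k}(q_k)|^{-1}$, and note $\rho_k\to 0$ and $\rho_k/\d(q_k,\Gamma_k)\to 0$. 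Two regimes then arise, governed by $\tau_k:=\d(q_k,\Gamma_k)/\rho_k\in(0,\infty]$.

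\textbf{Interior regime.} If $\tau_k\to\infty$ (along a subsequence), the rescalings $\Sigma_k':=\eta_{q_k,\rho_k}(\Sigma_k)$ are $(c\rho_k)$-stable with $c\rho_k\to 0$, have $|A^{\Sigma_k'}(0)|=1$ and $|A^{\Sigma_k'}|\le 2$ on balls of radius $\to\infty$, and stay away from the rescaled boundary. By the interior compactness already available (Theorem \ref{T:curvat}, together with Remark \ref{metricinG} to absorb the metric and the ellipticity bounds \eqref{costper}) they converge locally smoothly to a smooth \emph{stable} surface $\Sigma'_\infty$ in $\R^3$ for the blown-up (now translation-invariant, since $q_k$ is rescaled to the origin and $F_k$ converges) integrand $F_\infty$, with $|A^{\Sigma'_\infty}(0)|=1$; but a stable minimal surface for a uniformly elliptic integrand in $\R^3$ with Euclidean area growth is flat (this is precisely Allard's anisotropic analogue of the Schoen--Simon estimate, invoked via \cite{Allard1983} and the upper density bound from Remark \ref{stablee}/Remark \ref{palla}), so $A^{\Sigma'_\infty}(0)=0$, a contradiction.

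\textbf{Boundary regime.} If $\tau_k\to\tau_\infty<\infty$, the rescalings $\Sigma_k'$ have boundary $\Gamma_k':=\eta_{q_k,\rho_k}(\Gamma_k)$ at bounded distance from the origin. Since $\Gamma_k\subset\partial\Omega_k$ is $C^{2,\alpha}$ with $\partial\Gamma_k\cap B_{2R}=\emptyset$ and lies in the $C^3$ hypersurface $\partial\Omega_k$ satisfying $H^{\partial\Omega_k}_F>2c$, after rescaling $\Gamma_k'$ converges in $C^{2}$ to a straight line $\ell$ and $\partial\Omega_k$ rescales to a halfspace $\Pi$ (the mean curvature term being killed by the scaling). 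Here I would use the upper density bound of Proposition \ref{noblow} (applicable since $\Sigma_k$ satisfies the first-variation identity with $\eta_s$ supported on the boundary curve) to get uniform mass bounds, then pass to a varifold limit, upgrade to smooth convergence up to $\ell$ by the boundary regularity package already assembled in the paper (Lemma \ref{l:min1} together with the boundary regularity Theorem \ref{boundregt} of the appendix, which gives smoothness up to a $C^{2,\alpha}$ free boundary for $\Fc$-minimizers) and the boundary analogue of Allard's estimate in \cite{Allard1983}, \cite{DDH}. The limit $\Sigma'_\infty$ is then a smooth $c$-stable-type surface with flat boundary $\ell$ sitting in a halfspace, satisfying a homogeneous (minimal) equation for $F_\infty$; a Hopf-type maximum principle / reflection argument, or directly the stability inequality with the logarithmic cutoff as in \cite{DDH}, forces $\Sigma'_\infty$ to be a halfplane, again contradicting $|A^{\Sigma'_\infty}(0)|=1$.

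\textbf{Main obstacle.} The delicate point is the boundary regime: one must show the blow-up limit is smooth \emph{up to} the limiting straight boundary with the curvature bound passing to the limit. This is where the two-dimensionality of $\Sigma$ is essential (the boundary is a curve), and where the machinery of \cite{DDH} and the appendix boundary-regularity theorem \ref{boundregt} does the heavy lifting; in particular one needs that $c$-stability plus the boundary first-variation identity \eqref{bb01} yields the same $L^2$-curvature control near the boundary that \cite{DDH} provides in the free-boundary setting, so that the sheeting/graphical decomposition holds uniformly and the limit picks up no hidden multiplicity or kink along $\ell$. The uniformity of $C$ and $r_1$ over compact classes of $(\Omega,\Gamma,F)$ is then automatic from the contradiction setup, since any failure would produce the above sequences.
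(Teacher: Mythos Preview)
Your blow-up/contradiction strategy with point selection and the interior/boundary dichotomy is exactly what the paper does---or rather, what the paper defers to, since its proof simply says to repeat \cite[Theorem~4.1]{DDH} verbatim, noting that (i) the upper density bound (\cite[Proposition~6]{DDH}, i.e.\ Proposition~\ref{noblow} here) extends to $c$-stationary surfaces once $H_F^{\partial\Omega}>2c$, and (ii) $c$-stability yields \cite[Equation~(41)]{DDH} via Remark~\ref{stablee}, so that \cite[Lemma~4.3]{DDH} (the $L^2$-curvature control near the boundary) still applies. Your outline matches this.

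One correction in the boundary regime: you invoke Lemma~\ref{l:min1} and Theorem~\ref{boundregt} to ``upgrade to smooth convergence up to $\ell$'', but neither is the right tool here. Lemma~\ref{l:min1} concerns the specific replacement $\partial\tilde\Omega^j$ built from a minimizing sequence, not an arbitrary $c$-stable surface, and Theorem~\ref{boundregt} requires the surface to be an $\Fc$-\emph{minimizer}, which your blow-up limit is not. In fact no separate boundary-regularity input is needed at this stage: point selection already gives $|A^{\Sigma_k'}|\le C$ uniformly on the rescaled surfaces (up to and including the boundary strip), and since the $\Gamma_k'$ converge in $C^{2,\alpha}$ to a straight line, smooth convergence up to the boundary follows from standard elliptic Schauder estimates. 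The real work is showing the limit is a halfplane, and for that the correct ingredient is precisely what you name at the end---the $L^2$-curvature bound of \cite[Lemma~4.3]{DDH}, fed by the stability inequality---rather than the minimizer-regularity package.
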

\begin{proof} 
The proof can be repeated verbatim as the proof of \cite[Theorem 4.1]{DDH}. We highlight here only the differences. One can easily verify by the maximum principle (as in the proof of Proposition \ref{noblow} and Proposition \ref{noblow2}) that \cite[Proposition 6]{DDH} still holds under the assumption that $H_F^{\partial \Omega}> 2c$ and $\Si$ is a $c$-stationary surface (rather than stationary). Moreover, by Remark \ref{stablee}, the $c$-stable surface $\Si$ satisfies \cite[Equation (41)]{DDH}.
We conclude observing that the proof \cite[Lemma 4.3]{DDH} applies also to $c$-stable surfaces, provided \cite[Equation (41)]{DDH} holds.
\end{proof}

\begin{Lemma}\label{trasv}
Consider a point $x\in M$, an integral $n$-varifold $V\in \cV_\infty^c$ in $\R^{n+1}$, and the set
\begin{equation*}
\begin{split}
Q\;=\;\{y \in \supp(\|V\|):& \mbox{ $TV(y, V)=\{\theta \pi\}$}, \\
& \mbox{with $\theta>0$, and $\pi$ is a plane transversal 
to $\partial B_{\d(x,y)} (x)$}\}\,.
\end{split}
\end{equation*}
If $\rho<\frac{1}{\lambda(c+\lambda+\lambda^3(n+2))}$, then $Q$ is dense 
in $(\supp (\|V\|))\cap B_\rho (x)$.
\end{Lemma}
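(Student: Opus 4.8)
## Proof proposal for Lemma \ref{trasv}

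The plan is to follow the classical argument of Pitts (and Colding--De Lellis, cf. the proof of the analogous density lemma in \cite[Section 6]{CD}), adapted to the anisotropic setting by exploiting the tools already assembled in this section. The point to prove is that, at $\|V\|$-almost every point $y$ near $x$ (in a sense made quantitative below), the blowup is a single multiplicity-$\theta$ plane which is moreover transversal to the sphere $\partial B_{\d(x,y)}(x)$ centered at $x$ through $y$. The density lower bound of Lemma \ref{rect} guarantees $\theta^*(y,V)\geq C>0$ for every $y\in\supp\|V\|$, and since $V\in\cV_\infty^c$ has locally bounded anisotropic first variation, the rectifiability theorem \cite[Theorem 1.2]{DDG2} (cf. Remark \ref{rectif}) gives that $V$ is rectifiable; then standard rectifiable-varifold theory (e.g.\ \cite[Chapter 3]{Si}) ensures that for $\|V\|$-a.e.\ $y$ the approximate tangent plane exists, is unique, and $TV(y,V)=\{\theta(y,V)\,T_y(\supp\|V\|)\}$ with $\theta(y,V)>0$. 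So the \emph{existence} of a single-plane blowup with positive multiplicity holds on a set of full $\|V\|$-measure, hence dense in $(\supp\|V\|)\cap B_\rho(x)$; the only thing to be upgraded is the transversality condition, and that is where the radius restriction $\rho<\frac{1}{\lambda(c+\lambda+\lambda^3(n+2))}$ enters.

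First I would fix such a $\|V\|$-typical point $y\in B_\rho(x)$ with $TV(y,V)=\{\theta\pi\}$, $\theta>0$, and argue by contradiction that the plane $\pi$ is transversal to $S_y:=\partial B_{\d(x,y)}(x)$. If it were not, then $\pi=T_yS_y$, i.e.\ the blowup plane would be tangent to the geodesic sphere at $y$. Using Remark \ref{metricinG} we may work in $\R^{n+1}$; the geodesic sphere $S_y$ has anisotropic mean curvature bounded below by Remark \ref{palla} (or rather its $(n+1)$-dimensional analogue), with the precise constant controlled by $\lambda$, the radius $\d(x,y)\leq\rho$, and the metric error terms — this is exactly what the quantity $\lambda(c+\lambda+\lambda^3(n+2))$ encodes: namely that for $r<\frac{1}{\lambda(c+\lambda+\lambda^3(n+2))}$ one has $H_F^{\partial B_r(x)}>2c$ with room to spare, so the sphere $S_y$ is a strict barrier for surfaces with anisotropic mean curvature bounded by $c$. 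Now, a varifold whose blowup at $y$ is the plane $T_yS_y$ would, on one side of $S_y$, be forced by a first-variation/maximum-principle comparison against spheres slightly larger than $S_y$ to touch $\partial B_{\d(x,y)+\eps}(x)$ from inside for small $\eps$ — contradicting $H_F^{\partial B_{\d(x,y)+\eps}}>c\geq$ (the mean curvature bound for $V$). This is the same barrier mechanism used to derive \eqref{arg}, \eqref{cond}, and in the proof of Lemma \ref{rect}; I would invoke \cite[Lemma 4.5]{DKS} to see that $\supp\|V\|$ near the touching point is a $(n,c)$-subset, and then \cite[Proposition 2 (iii)]{DDH} to extract the pointwise inequality $H_F^{\partial B_{\d(x,y)+\eps}(x)}\leq c$ at the touching point, which is the contradiction.

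More carefully, the argument should be phrased as follows. Set $Q$ as in the statement and suppose it is not dense: then there is an open ball $B_s(z)\subset B_\rho(x)$ with $B_s(z)\cap\supp\|V\|\neq\emptyset$ but $Q\cap B_s(z)=\emptyset$. On a full-$\|V\|$-measure subset of $B_s(z)\cap\supp\|V\|$ the blowup is a single positive-multiplicity plane (by the rectifiability/density discussion above), so for all such $y$ we must have $T_y(\supp\|V\|)=T_yS_y$, i.e.\ $\supp\|V\|$ is tangent to the concentric geodesic sphere through $y$ at $\|V\|$-a.e.\ point of $B_s(z)$. Consider the function $f(y):=\d(x,y)$ restricted to $\supp\|V\|$: the condition $T_y(\supp\|V\|)=T_yS_y$ says exactly that the tangential gradient $\n^{\supp\|V\|}f$ vanishes $\|V\|$-a.e.\ on $B_s(z)$, so $f$ is ($\|V\|$-a.e.) locally constant along $\supp\|V\|$. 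Pick then a value $t$ such that $S':=\supp\|V\|\cap\{f=t\}$ is nonempty inside $B_s(z)$ and such that $\supp\|V\|\cap B_s(z)\subset\overline{B_t(x)}$ on one side; by maximality/minimality of $t$ we get a point $y_0\in S'$ where $\supp\|V\|$ touches $\partial B_t(x)$ from inside and stays in $\overline{B_t(x)}$ in a neighborhood. Applying \cite[Lemma 4.5]{DKS} on that neighborhood (using $\|\delta_\FF(V\res B)\|_{\mathrm{sing}}=0$ and the mean curvature bound $c$), $\supp\|V\|$ is locally an $(n,c)$-subset; then \cite[Proposition 2 (iii)]{DDH} with $N=\overline{B_t(x)}$ gives $H_F^{\partial B_t(x)}(y_0)\leq c$. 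On the other hand $t=\d(x,y_0)\leq\rho<\frac{1}{\lambda(c+\lambda+\lambda^3(n+2))}$, and the curvature estimate for geodesic spheres (Remark \ref{palla}, in its $(n+1)$-dimensional form, with the metric corrections accounted for by the extra $\lambda^3(n+2)$ term) yields $H_F^{\partial B_t(x)}>2c>c$. This contradiction proves that $Q$ is dense.

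\medskip
\noindent\emph{Main obstacle.} The delicate point is the passage from ``the blowup plane at $\|V\|$-a.e.\ $y$ is tangent to $S_y$'' to ``$\supp\|V\|$ is foliated by pieces of concentric geodesic spheres'', since a priori $\supp\|V\|$ is only a rectifiable set and $f=\d(x,\cdot)$ is only Lipschitz on it; one needs that $\n^{\supp\|V\|}f=0$ $\|V\|$-a.e.\ forces $f$ to be constant on connected pieces of the support in a usable sense, and then that the level-set geometry actually produces an interior-touching point against some $\partial B_t(x)$. This requires being careful about which level $t$ and which component one picks (maximizing $f$ over the relevant piece of $\supp\|V\|$ inside $B_s(z)$), and verifying the hypotheses of \cite[Lemma 4.5]{DKS}/\cite[Proposition 2 (iii)]{DDH} at the touching point — exactly the same technical care already exercised in the proof of Lemma \ref{rect} and in Step 2 of the proof of Proposition \ref{p:reg}. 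Everything else is a routine combination of the rectifiability theorem, the density lower bound, and the sphere barrier.
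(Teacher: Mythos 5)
Your setup is the same as the paper's: negate density to obtain a ball $B_t(y)\subset B_\rho(x)$ in which the approximate tangent plane $T_zK$ is tangent to $\partial B_{\d(x,z)}(x)$ for ($\H^n$-a.e.) $z$, and then seek a contradiction with $V\in\cV_\infty^c$ using the largeness of $H_F^{\partial B_r}$ for $r<\rho$. But the mechanism you propose for the contradiction --- upgrade ``$\nabla^{\supp\|V\|}\d(x,\cdot)=0$ a.e.'' to ``$\supp\|V\|$ is locally contained in a single sphere $\partial B_t(x)$,'' then produce an interior touching point and invoke \cite[Lemma 4.5]{DKS} and \cite[Proposition 2 (iii)]{DDH} --- is exactly the step you flag as the ``main obstacle,'' and it is a genuine gap, not a routine technicality. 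For a merely rectifiable set $K$, the vanishing of the tangential gradient of a Lipschitz function $\H^n$-a.e.\ does not force the function to be locally constant on $\supp\|V\|$: the $C^1$ pieces covering $K$ only agree with $K$ up to measure zero, there is no connectivity structure on the support to propagate constancy, and even if one selects an extremal level $t$ there is no guarantee that $\supp\|V\|$ stays on one side of $\partial B_t(x)$ in a full neighborhood of a candidate touching point, which is what the barrier/maximum-principle lemmas require. So as written the argument does not close.

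The paper avoids this entirely by a direct first-variation computation. It tests $V$ against the vector field $X(z)=f(z)\,D_\nu G(z,\tilde\nu(z))$, where $\tilde\nu(z)$ is the outward normal to $\partial B_{\d(x,z)}(x)$ and $f$ is a cutoff equal to $1$ on $B_{t/2}(y)$. Because on $B_t(y)$ the tangent planes of $K$ coincide with those of the concentric spheres, the tangential-divergence term in $\delta_\FF V(X)$ produces (up to error terms bounded by $(\lambda+\lambda^3(n+2))\int|X|\,d\|V\|$ via \eqref{costper}, \eqref{eulercod1} and $D\tilde\nu[\tilde\nu]=0$) the quantity $\int f\,G(z,\tilde\nu)\,H_F^{\partial B_{\d(x,z)}(x)}\,d\H^n\geq \frac{1}{\lambda\rho}\int|X|\,d\|V\|$ by Remark \ref{palla}; the choice of $\rho$ then gives $\delta_\FF V(X)>c\int|X|\,d\|V\|$, contradicting $V\in\cV_\infty^c$. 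Note that in the paper the term $\lambda^3(n+2)$ is the bound on these first-variation error terms, not a ``metric correction'' to the sphere curvature as you suggest. If you want to complete your proof, replacing your foliation-and-touching-point step with this first-variation test is the missing idea; the rest of your preliminary discussion (integrality, a.e.\ uniqueness of the tangent plane, density of such points) is consistent with the paper.
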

\begin{proof} Being $V$ integral, we have $V=\theta \H^n \res K \otimes \delta_{T_xK}$ for some $n$-rectifiable set $K$ and some 
Borel function $\theta:K\to \N$. Assume by contradiction that the lemma is false. In particular, there exists $y\in B_\rho (x)\cap \supp (\|V\|)$ and $t>0$
such that $T_zK$ is tangent to $\partial B_{\d(z,x)} (x)$, for any $z\in B_t (y)$.
We choose $t$ so that $B_t (y)\subset B_\rho (x)$. Let $f$ be a smooth nonnegative function in $C^\infty_c (B_t (y))$ with $f=1$ on $B_{t/2} (y)$.
We define the vector field
$$X(z):=f(z)D_{\nu}G(z,\tilde{\nu}(z)), \quad \mbox{where $\tilde{\nu}(z):=\nu^{\partial B_{\d(x,z)}(x)}(z)$}.$$
We observe that, by \eqref{costper},
\begin{equation}\label{stim}
|X(z)|\geq \frac{f(z)}{\lambda}.
\end{equation}
We compute the following contradiction
\begin{equation*}
\begin{split}
&\delta_\FF V(X)=\int \langle D_z G(z,\nu), X(z)\rangle + (G(z,\nu)Id - D_\nu G(z,\nu)\otimes \nu):DX(z)dV(z,\nu)\\
&\geq -\lambda \int |X|d\|V\|+\int_{K\cap B_t (y)}  \theta(z)(G(z,\tilde{\nu}(z))Id - D_\nu G(z,\tilde{\nu}(z))\otimes \tilde{\nu}(z)):\\
&:(Df(z)\otimes D_{\nu}G(z,\tilde{\nu}(z))+f(z)D^2_{\nu}G(z,\tilde{\nu}(z))D\tilde{\nu}(z)+f(z)D^2_{z\nu}G(z,\tilde{\nu}(z)))d\H^n(z)\\
&\geq -\lambda \int |X|d\|V\|+\int_{K\cap B_t (y)}  \theta(z)(G(z,\tilde{\nu}(z))\langle Df(z),  D_{\nu}G(z,\tilde{\nu}(z))\rangle +\\
& -G(z,\tilde{\nu}(z))\langle Df(z),  D_{\nu}G(z,\tilde{\nu}(z))\rangle +f(z)G(z,\tilde{\nu}(z))H_F^{\partial B_{\d(x,z)}(x)}(z)-f(z)\lambda^2(n+2))d\H^n(z)\\
&\geq -(\lambda+\lambda^3(n+2)) \int |X|d\|V\|+\int_{K\cap B_t (y)}  \theta(z)f(z)G(z,\tilde{\nu}(z))\frac{1}{\lambda \d(x,z)}d\H^n(z)\\
&\geq (\frac{1}{\lambda \rho}-(\lambda+\lambda^3(n+2))) \int |X|d\|V\|> c\int |X|d\|V\|.
\end{split}
\end{equation*}
where in the first inequality we used \eqref{costper}, in the second inequality we used \eqref{eulercod1} and $D\tilde{\nu}(z)[\tilde{\nu}(z)]=0$, \eqref{cost per area} and \eqref{costper}, in the third inequality we used \eqref{stim}, Remark \ref{palla} and the assumption $\rho<\frac{1}{\lambda(c+\lambda+\lambda^3(n+2))}$.
\end{proof}

\begin{Lemma}\label{nonconc}
There exists $\alpha\in (0,1)$ and  $r_0>0$ (depending just on $F$) such that, for every $n$-varifold $V\in \cV_\infty^c$ and for every $x_0\in \R^{n+1}$
$$\|V\|(B_r(x_0))\leq \alpha \|V\|(B_{2r}(x_0)), \qquad \forall r<r_0.$$
 \end{Lemma}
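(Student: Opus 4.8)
The plan is a one-scale first-variation estimate with a radial test vector field; crucially, no monotonicity formula is needed. Since the statement is already local and set in $\R^{n+1}$, I normalize $x_0=0$ and let $\lambda$ be the constant of \eqref{cost per area}--\eqref{costper}. Fix a cutoff $\phi\in C^\infty([0,\infty);[0,1])$ with $\phi\equiv 1$ on $[0,r]$, $\supp\phi\subset[0,2r)$ and $|\phi'|\le 2/r$, and test the assumption $V\in\cV_\infty^c$, i.e. $|\delta_\FF V(X)|\le c\int|X|\,d\|V\|$, against $X(x):=\phi(|x|)\,x$. Since $DX=\phi\,\Id+\phi'\,|x|^{-1}\,x\otimes x$ and $\langle D_\nu G(x,\nu),\nu\rangle=G(x,\nu)$ by \eqref{eulercod1}, the first-variation formula of Remark \ref{duality} (the same computation is valid verbatim in $\R^{n+1}$ for $n$-varifolds) gives
\[
\delta_\FF V(X)=\int\Big[\phi\,\langle D_xG(x,\nu),x\rangle+n\,\phi\,G(x,\nu)+\phi'\Big(|x|\,G(x,\nu)-\tfrac{\langle\nu,x\rangle\langle D_\nu G(x,\nu),x\rangle}{|x|}\Big)\Big]\,dV(x,\nu).
\]

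Next I would split this integral over $B_r(0)$ and over $\an(0,r,2r)$. On $B_r(0)$ one has $\phi\equiv1$, $\phi'\equiv0$, and by \eqref{cost per area}--\eqref{costper} the integrand equals $\langle D_xG,x\rangle+nG\ge n/\lambda-\lambda|x|\ge n/\lambda-\lambda r\ge n/(2\lambda)$ as soon as $r\le n/(2\lambda^2)$; hence the $B_r$-part is $\ge \tfrac{n}{2\lambda}\,\|V\|(B_r(0))$. On $\an(0,r,2r)$, using $r<1$, $|x|\le 2r$, $|\phi'|\le 2/r$, $|\langle\nu,x\rangle|\le|x|$ and \eqref{costper}, the integrand is bounded in absolute value by a constant $C_1=C_1(n,\lambda)$ (e.g. $C_1=(n+10)\lambda$), so this part is $\ge -C_1\big(\|V\|(B_{2r}(0))-\|V\|(B_r(0))\big)$. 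Finally, since $|X|=\phi(|x|)|x|\le 2r$ on $\supp X\subset B_{2r}(0)$, one has $\delta_\FF V(X)\le|\delta_\FF V(X)|\le c\int|X|\,d\|V\|\le 2cr\,\|V\|(B_{2r}(0))$. Adding the three bounds yields
\[
\Big(\tfrac{n}{2\lambda}+C_1\Big)\,\|V\|(B_r(0))\ \le\ \big(2cr+C_1\big)\,\|V\|(B_{2r}(0)).
\]

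It remains only to choose the thresholds. Taking $r_0:=\min\big\{1,\ \tfrac{n}{2\lambda(\lambda+2c)}\big\}$ guarantees both $r_0\le n/(2\lambda^2)$ (so the $B_r$-estimate applies) and $2cr<2cr_0<n/(2\lambda)$ for every $r<r_0$, whence the displayed inequality gives $\|V\|(B_r(0))\le\alpha\,\|V\|(B_{2r}(0))$ with
\[
\alpha:=\frac{2cr_0+C_1}{\tfrac{n}{2\lambda}+C_1}\in(0,1),
\]
which is the claim; in fact $\alpha$ may be fixed to be any number in $\big(\tfrac{C_1}{C_1+n/(2\lambda)},1\big)$ depending only on $F$, at the cost of further shrinking $r_0$. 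I do not expect a serious obstacle here. The one point requiring care is purely bookkeeping: one must make sure every error term produced on the annulus is absorbed into $\|V\|(B_{2r}(0))-\|V\|(B_r(0))=\|V\|(\an(0,r,2r))$ and not merely into $\|V\|(B_{2r}(0))$, since it is exactly this cancellation that forces $\alpha<1$; and one should check that smoothing the piecewise-linear cutoff keeps $\|\phi'\|_\infty$ of order $1/r$, so that $C_1$ is a genuine scale-independent constant. The absence of an anisotropic monotonicity formula is irrelevant, as only a one-sided ratio bound between the two fixed scales $r$ and $2r$ is needed.
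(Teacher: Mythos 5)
Your proof is correct and follows essentially the same route as the paper's: the radial test field $X=\phi(|x|)x$ in the anisotropic first-variation formula, Euler's relation \eqref{eulercod1} to extract the coercive term of order $n\,G\,\phi$ on $B_r$, a crude bound on the annulus, and the hole-filling step (adding a multiple of $\|V\|(B_r)$ to both sides) to force $\alpha<1$. The only cosmetic differences are that the paper bounds the $\phi'$-term via the convexity inequality \eqref{eq:sconv} where you use $|D_\nu G|\le\lambda$ directly, and that (like the paper's own choice $r_0=\tfrac{1}{4(c+\lambda)}$) your $r_0$ depends on $c$ as well as $F$, while $\alpha$ can be taken to depend on $F$ alone, which is what is actually used later.
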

\begin{proof}
By Remark \ref{duality}, we recall that
 $$\delta_\FF V(X)=\int \langle D_x G(x,\nu), X(x)\rangle + (G(x,\nu)Id - D_\nu G(x,\nu)\otimes \nu):DX(x)dV(x,\nu).$$
Up to translation, we can assume $x_0=0$.
Since $V\in \cV_\infty^c$, we have
\begin{equation}\label{one}
\left | \int \langle D_x G(x,\nu), X(x)\rangle + (G(x,\nu)Id - D_\nu G(x,\nu)\otimes \nu):DX(x)dV(x,\nu) \right|\leq c \int  |X|\,d\|V\|.
\end{equation}
We consider $X(x):=\phi(|x|)x$, where $\phi :\R\to [0,\infty)$ satisfies $\phi(s)=1$ if $s\leq r$, $\phi(s)=0$ if $s\geq 2r$ and $\phi'(s)\leq 2/r$ if $s\in (r,2r)$. It follows that $DX(x)=\phi(|x|)Id+\phi'(|x|)\frac{x\otimes x}{|x|}$. Then \eqref{one} reads
\begin{equation*}\label{two}
\begin{split}
&\left |\int  nG\phi(|x|)-\langle D_\nu G,\nu\rangle \phi(|x|) - \langle \nu, x/|x|\rangle \langle D_\nu G,x\rangle \phi'(|x|)+G|x| \phi'(|x|)  dV(x,\nu)\right|\\
&\qquad \leq (c+\lambda) \int  \phi(|x|)|x|\,d\|V\|,
\end{split}
\end{equation*}
which, using the one-homogeneity of $G$ reads
\begin{equation*}\label{three}
\begin{split}
&\frac{n-1}{\lambda} \int  \phi(|x|)dV(x,\nu)\\
&\leq  \int  \left | \langle \nu, \frac{x}{|x|}\rangle \langle D_\nu G(x,\nu),\frac{x}{|x|}\rangle +G(x,\nu)\right |\phi'(|x|)|x|dV(x,\nu)+ 2r(c+\lambda) \|V\|(B_{2r}).
\end{split}
\end{equation*}
By \eqref{eq:sconv}, we deduce that
\begin{equation*}\label{four}
\frac{n-1}{\lambda} \|V\|(B_{r})\leq  \int  (G(x,\nu) +G(x,\frac{x}{|x|}))\phi'(|x|)|x|dV(x,\nu)+ 2r(c+\lambda) \|V\|(B_{2r}),
\end{equation*}
or equivalently
\begin{equation*}\label{five}
 \|V\|(B_{r})\leq  \frac{2\lambda^2}{n-1}\|V\|(\an(r,2r))+ 2r(c+\lambda) \|V\|(B_{2r}).
\end{equation*}
Denoting $K:=\frac{2\lambda^2}{n-1}$ and adding on both side $K \|V\|(B_{r})$, we deduce that
$$ 
\|V\|(B_{r})\leq  \frac{K+ 2r(c+\lambda)}{K+1} \|V\|(B_{2r}).
$$
Choosing $r<r_0:=\frac 1{4 (c+\lambda)}$ and $\alpha:=\frac{K+ \frac 12}{K+1}$, we conclude the claim.
\end{proof}

\section{Appendix: A boundary regularity theorem}\label{boundreg}

\begin{Theorem}\label{boundregt}
Fix $y\in \partial_+ An$ and assume that the finite perimeter set $\Omega$ minimizes $\FF^c$ in the class $\mathcal{P}(\Omega, B_{\rho} (y)\cap An)$ of the finite perimeter sets equal to $\Omega$ outside of $B_{\rho} (y)\cap An$. Moreover assume that $\partial \Omega\cap  B_{\rho} (y)\setminus An$ is smooth embedded and $\partial \Omega\cap  B_{\rho} (y)\cap  \partial_+ An$ is a smooth embedded curve $\gamma$. Then there exists a smaller ball $B_{\sigma} (y)\subset B_{\rho} (y)$ such that $\partial \Omega\cap B_\sigma (y)\cap An$ is smooth up to the boundary $\partial \Omega\cap \partial_+ An \cap B_\sigma (y)$. 
\end{Theorem}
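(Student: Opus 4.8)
The plan is to change variables so that the problem becomes boundary regularity for an anisotropic $\Lambda$-minimizer in a half-ball with smooth prescribed trace, and then to run the classical De Giorgi scheme (density bounds, blow-up, $\varepsilon$-regularity, Schauder bootstrap) adapted to the functional $\FF^c$, exploiting the uniform convexity \eqref{costper} of $G$ and the smoothness of the fixed piece $\partial\Omega\setminus An$.

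\emph{Step 1 (straightening).} By Remark \ref{metricinG} we may work in $\R^3$. First I would apply a smooth diffeomorphism $\Phi$ of a neighbourhood of $y$ carrying $\partial_+ An$ onto $\{x_3=0\}$, the set $An$ onto $\{x_3>0\}$, the smooth hypersurface $\partial\Omega\cap(B_\rho(y)\setminus\overline{An})$ onto the half-plane $\{x_2=0,\ x_3\le 0\}$, and $\gamma$ onto the line $\ell=\{x_2=x_3=0\}$; since all these objects are smooth this can be done keeping $\Phi$ smooth. Pushing forward, $F$ becomes a new $C^3$ elliptic integrand $\widetilde F$ (the Jacobian weight of $\Phi$ preserves \eqref{cost per area} and \eqref{costper}) and $c$ becomes a smooth bounded weight, so the relabelled $\Omega$ minimizes $\widetilde\FF^{\,\widetilde c}(W):=\int_{\partial^* W}\widetilde F(x,\nu)\,d\haus^2-\int_W\widetilde g\,dx$ among $W$ with $W\triangle\Omega\subset\subset B_{\rho'}(0)\cap\{x_3\ge 0\}$, with $\Omega\cap(B_{\rho'}\cap\{x_3<0\})=\{x_2<0,\ x_3<0\}$. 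In particular $\Omega$ satisfies the $\Lambda$-minimality condition (1.12) of \cite{DM} in $B_{\rho'}\cap\{x_3>0\}$, so by Theorem \ref{T:regularity of Ac minimizers} the set $\partial\Omega\cap\{x_3>0\}$ is already a smooth embedded surface with anisotropic mean curvature $\widetilde c$ away from $\{x_3=0\}$; the issue is only regularity up to $\ell$.

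\emph{Step 2 (structure at the wall).} Next I would record two facts obtained by comparison alone, so that no monotonicity formula is needed. A cone competitor (cutting $\Omega$ off over a small ball centred near $0$ and coning to the centre — admissible since it only modifies $\Omega$ inside $\{x_3\ge 0\}$) gives a uniform upper density bound $\haus^2(\partial\Omega\cap B_r(p))\le Cr^2$ for $p$ near $0$ and $r$ small. A slab competitor shows $\partial^*\Omega$ carries no $\haus^2$-positive portion of $\{x_3=0\}$ near $0$: filling such a film decreases anisotropic area by at least $\tfrac1\lambda$ times its measure while the lateral cost is of lower order; hence the trace of $\partial\Omega$ on $\{x_3=0\}$ from $\{x_3>0\}$ equals the one from below, namely $\{x_2<0\}$, so $\partial\Omega$ genuinely issues from $\ell$. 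Finally, using the flat fixed piece $\{x_2=0,\ x_3\le 0\}$ and its extension $\{x_2=0\}$ as comparison surfaces, a standard ``trapped between planes'' argument shows $\partial\Omega\cap\{x_3>0\}$ lies in $\{|x_2|\le\delta(r)\}$ on half-balls $B_r\cap\{x_3>0\}$ centred on $\ell$, with $\delta(r)\to 0$; thus the anisotropic excess of $\partial\Omega$ relative to $\{x_2=0\}$ is small on all sufficiently small half-balls centred on $\ell$ (in particular the tangent plane of $\partial\Omega$ along $\ell$ is forced to be $\{x_2=0\}$, so $\partial\Omega\cap\{x_3>0\}$ meets the fixed piece $C^1$-continuously along $\ell$).

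\emph{Step 3 ($\varepsilon$-regularity and bootstrap).} With small excess in hand along $\ell$, I would prove a boundary $\varepsilon$-regularity statement: if the anisotropic excess is below a threshold on a half-ball centred on $\ell$, then $\partial\Omega\cap\{x_3>0\}$ is a $C^{1,\alpha}$ graph $x_2=u(x_1,x_3)$ over $\{x_2=0,\ x_3\ge 0\}$ with $u=0$ on $\ell$, at a smaller scale. This is the classical De Giorgi excess-decay iteration, carried out in the half-space with the Dirichlet condition on $\{x_3=0\}$ and with $\widetilde F$ linearised around the competitor plane — the uniform convexity \eqref{costper} of $\widetilde F$ providing the needed coercivity — exactly as in the Schoen–Simon/Almgren-type boundary theory for the area functional; equivalently one may deduce it from the interior $\varepsilon$-regularity for anisotropic $\Lambda$-minimizers of \cite{DM} after an even reflection across the now-flat $\{x_3=0\}$. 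Once $u\in C^{1,\alpha}$ up to $\ell$, it solves the uniformly elliptic quasilinear equation expressing $H^{\,}_{\widetilde F}\equiv\widetilde c$, which is divergence-form with $C^2$ structure and smooth boundary datum $u|_{\ell}=0$; Schauder boundary estimates and a routine bootstrap then give $u\in C^{2,\alpha}$ up to $\ell$, i.e. $\partial\Omega\cap B_\sigma(0)\cap\{x_3>0\}$ smooth up to $\ell$ for a smaller $\sigma$. Undoing $\Phi$ yields $B_\sigma(y)\subset B_\rho(y)$ with $\partial\Omega\cap B_\sigma(y)\cap An$ smooth up to $\gamma$.

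\emph{Main obstacle.} The delicate point is Step 3: setting up the anisotropic boundary $\varepsilon$-regularity in the absence of a monotonicity formula. The a priori flatness of Step 2 makes the tangent objects along $\ell$ trivial (forced half-planes), which tames the blow-up; but one still has to push the anisotropic excess decay all the way to the Dirichlet boundary — or, taking the reflection route, verify that the reflected problem still falls within the interior theory of \cite{DM} once the fixed piece has been flattened — and to check that the coercivity constants are governed only by $\lambda$ through \eqref{costper}. The rest (density bounds, removal of the wall film, trapping, Schauder) is comparison-based and routine.
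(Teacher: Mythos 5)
Your overall architecture (initial flatness at the boundary plus a boundary $\varepsilon$-regularity/excess-decay theorem, then Schauder) is the right shape of the theory, but both load-bearing steps have gaps, and the second one is the whole difficulty. In Step 2 your ``trapped between planes'' claim is false: the minimization inside $B_\rho(y)\cap An$ imposes a \emph{Dirichlet}-type condition along the curve $\gamma$ (the competitor must attach to $\gamma$ because the set is frozen outside $An$), not a condition forcing tangency with the fixed piece $\partial\Omega\setminus An$. Already for the area integrand with $c\neq 0$ the minimizer spanning $\ell$ from inside the half-space is a tilted CMC surface, not a perturbation of the flat continuation $\{x_2=0\}$; so $\partial\Omega\cap An$ is \emph{not} trapped in $\{|x_2|\le\delta(r)\}$, and your parenthetical conclusion that it meets the fixed piece $C^1$-continuously along $\ell$ is stronger than the theorem and wrong in general. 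What is true, and what must be proved, is that \emph{some} half-plane containing the tangent line of $\gamma$ (possibly tilted by an angle $\alpha$ relative to the fixed piece) arises as a blow-up. This is exactly Lemma \ref{lem:blowup2} of the paper, and it is not a soft comparison: after establishing only a \emph{wedge} confinement $|x_3|\le L\,x_1$ via the Hopf-lemma barriers of Proposition \ref{noblow2}, one must run the Hardt/De Philippis--Maggi extremal-tilt argument (maximize the upper semicontinuous slope functional $\xi$ over the compact family of blow-ups, and rule out $\xi(\widetilde{E_1})>\xi(E_1)$ by comparison with solutions of the linearized equation as in \cite[Lemmas 2.11--2.12]{DM}) to upgrade the wedge to a single half-plane.

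In Step 3, neither of your two routes closes. The reflection trick requires the surface to meet the wall orthogonally (a Neumann/free-boundary condition); here the surface is pinned to $\gamma$ and meets the wall at a generic angle, so the reflected set is not a minimizer of any reflected functional and the interior theory of \cite{DM} does not apply to it. Carrying out the De Giorgi excess-decay iteration up to a Dirichlet boundary for a general elliptic integrand, without a monotonicity formula, is precisely the content of the boundary regularity theorem of Duzaar--Steffen \cite[Theorem 0.1]{DS}; the paper does not reprove it but invokes it, and the entire proof of Theorem \ref{boundregt} reduces to verifying its hypothesis $\theta_*(\partial\Omega\cap An,y)\le \tfrac12$, which is what the half-plane blow-up of Lemma \ref{lem:blowup2} delivers. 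So the missing ideas in your write-up are exactly the two nontrivial ingredients: the extremal blow-up argument producing a (possibly tilted) half-plane tangent object, and the citation (or full proof) of the anisotropic boundary $\varepsilon$-regularity theorem whose hypothesis that blow-up verifies.
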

\begin{proof}
Thanks to \cite[Theorem 0.1]{DS}, it is enough to prove that the lower density $\theta_*(\partial \Omega \cap An, y)\leq \frac 12$. To this aim, it is enough to show the existence of a blowup  $\C \in TV(y,\partial \Omega \cap An)$ which is half a plane.
This is the content of Lemma \ref{lem:blowup2}.  
\end{proof}

In the following, given $\Omega \in \C(\R^3)$ and $x\in \R^3$, we denote with \(TV(x,\Om)\) the sets of all the subsequential limits, as  $r \to 0$, of $\eta_{x,r}(\Om) \in \C(\R^3)$.

To prove the following Lemma \ref{lem:blowup2}, we closely follow the strategy of \cite[Lemma 5.4]{DM}, which is the analogous of \cite[Lemma 4.5]{hardt}.

\begin{Lemma}\label{lem:blowup2}
Under the assumptions of Theorem \ref{boundregt}, denoting $H:=\{x_1>0\}$, there exists  \(\nu\in \mathbb S^{2}:=\{|w|=1\}\) such that up to rotations
\begin{equation}\label{c1}
H\cap \{\langle \nu , x\rangle \le 0 \}\in TV(y,\Omega \cap An)\,.
\end{equation}
\end{Lemma}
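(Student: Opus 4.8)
The plan is to perform a blowup analysis of the minimizer $\Omega$ at the boundary point $y\in\gamma$, following the classical scheme of Hardt and of \cite[Lemma 5.4]{DM}, and to identify the resulting tangent cone as a half-plane cut by a hyperplane. First I would set up the blowup: for a sequence $r_n\downarrow 0$ consider the rescaled sets $\Omega_n:=\eta_{y,r_n}(\Omega\cap An)$. By the density upper bound and the minimality, the perimeters of $\partial\Omega_n$ in bounded balls are uniformly controlled (here one uses that $\Omega$ minimizes $\FF^c$ and that the volume term $c\haus^3$ is a lower-order perturbation of the anisotropic perimeter, together with the a priori upper density bound that follows from the comparison arguments already used in Lemma~\ref{rect}); hence up to a further subsequence $\ind_{\Omega_n}\to\ind_{\Omega_\infty}$ in $L^1_{loc}$ and $\partial\Omega_n\to\partial\Omega_\infty$ as varifolds, with $\Omega_\infty\in TV(y,\Omega\cap An)$. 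Because the ambient rescaling blows up the curvature of $\partial\Omega=\{$barrier$\}$ and of $\partial_+An$ away, the rescaled obstacle region $\eta_{y,r_n}(M\setminus An)$ converges to a half-space $H=\{x_1>0\}$ (after rotating so that $T_y(\partial_+An)=\{x_1=0\}$ and $\eta_{y,r_n}(An)\subset\{x_1\le 0\}$ in the limit), so that $\Omega_\infty\subset\{x_1\le 0\}$ — equivalently $H\cap\Omega_\infty=\emptyset$ — and $\Omega_\infty$ minimizes the \emph{anisotropic} perimeter $\FF$ in $\{x_1\le 0\}$ with the boundary curve becoming the flat $(n-1)$-plane $\{x_1=0,\,x_n=0\}$ (the value $c$ drops out, since $c\,r_n\haus^3$ scales away).

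Next I would invoke homogeneity. Since the limit $\Omega_\infty$ is an $\FF$-minimizer in a half-space with a scaling-invariant boundary datum and $y$ is the origin, a standard monotonicity-type argument for minimizers of the (constant-coefficient, elliptic) anisotropic functional — now available because after blowup the integrand is translation invariant, so the Allard-type density ratio is monotone for this frozen integrand — forces $\Omega_\infty$ to be a cone: $\eta_{0,r}(\Omega_\infty)=\Omega_\infty$ for all $r>0$. Thus $\partial\Omega_\infty$ is an anisotropic-perimeter-minimizing cone in the half-space $\{x_1\le 0\}$, with boundary the $(n-1)$-plane $L:=\{x_1=x_n=0\}$, and we must show it is (half of) a hyperplane. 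The key dimensional input is that the surface is two-dimensional ($n=2$ throughout Theorem~\ref{boundregt}), so $\partial\Omega_\infty$ is a one-dimensional minimizing cone in a half-disk with its two endpoints on the line $L$; by the standard classification of one-dimensional stationary (indeed minimizing) cones — a finite union of rays meeting the origin, with the minimality and the boundary condition forcing exactly one ray emanating along a single direction $w$ — we conclude $\partial\Omega_\infty\cap\{x_1\le 0\}$ is a half-plane through $L$. Writing $\nu$ for a unit normal to that half-plane inside $\{x_1\le 0\}$, this yields precisely $\Omega_\infty=H\cap\{\langle\nu,x\rangle\le 0\}$ up to rotation, i.e. \eqref{c1}.

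The main obstacle I anticipate is justifying the two non-elementary ingredients in the anisotropic setting: (i) the monotonicity/homogeneity of the blowup limit, since in the anisotropic theory there is no exact monotonicity formula (this is repeatedly flagged in the paper), so one must exploit that \emph{after} blowup the integrand is constant-coefficient, where the Allard-type anisotropic monotonicity does hold, and that $\Omega$ is a genuine minimizer (not merely stationary), which upgrades density control to the scaling rigidity; and (ii) the passage from ``$\Omega$ minimizes $\FF^c$ in $B_\rho(y)\cap An$'' to ``$\Omega_\infty$ minimizes $\FF$ in the half-space with the flat boundary datum,'' which requires a careful lower-semicontinuity plus recovery-sequence argument respecting the obstacle $\{x_1\le0\}$ and the fixed trace along $\gamma$; here I would follow \cite[Lemma 5.4]{DM} closely, using that $\partial\Omega\setminus An$ and $\partial_+An$ are smooth near $y$ so that their rescalings converge in $C^{1}$ to the hyperplane $\{x_1=0\}$. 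Once these are in place, the one-dimensional cone classification (the only place $n=2$ is essential) is routine. I would also remark that the statement only claims the \emph{existence} of one such blowup, so no full uniqueness of the tangent cone is needed — it suffices to produce a single subsequence along which the limit is a half-plane cut by a hyperplane, which is exactly what the minimality forces.
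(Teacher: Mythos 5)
Your blowup setup and the reduction to a constant--coefficient anisotropic minimizer in a half--space are reasonable, but the pivotal step of your argument fails: you claim that after freezing the integrand at $y$ the ``Allard-type density ratio is monotone for this frozen integrand'', and you use this to conclude that the blowup limit is a cone. No such monotonicity holds for anisotropic integrands, even translation--invariant ones: Allard's theorem \cite{Allardratio} (cited in the introduction precisely to explain why monotonicity is unavailable throughout this paper) shows that monotonicity of the density ratio \emph{characterizes} the area integrand. Minimality (as opposed to mere stationarity) gives density bounds and compactness of the rescalings, but not homogeneity of the limit; consequently your cone--classification step --- which would in addition require classifying the one--dimensional links of anisotropic minimizing cones subject to a boundary condition --- never gets off the ground. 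This is not a fixable technicality: the homogeneity of anisotropic blowups is an open issue in general, and the whole point of the argument used here is to circumvent it.

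The paper instead follows Hardt \cite{hardt} and \cite[Lemma 5.4]{DM}, a route designed exactly to avoid homogeneity of blowups. First, every $E\in TV(y,\Omega)$ is confined to a wedge over the tangent line to $\gamma$ by the barrier construction of Proposition~\ref{noblow}. One then introduces the slope functional $\xi(E)=\inf_{(\pa E)^+}\langle x,e_3\rangle/\langle x,e_1\rangle$ (where $(\pa E)^+=\pa E\cap H$), which is upper semicontinuous on the compact set $TV(y,\Omega)$, and selects a maximizer $E_1$. A Hopf--boundary--point comparison with graphical solutions of the frozen anisotropic equation shows that if the half--plane of slope $\xi(E_1)$ were not contained in $\pa E_1$, then a further blowup of $E_1$ would have strictly larger slope, contradicting maximality; finally the resulting inclusions $W\subset E_1$, $\pa W\subset\pa E_1$ are upgraded to $W=E_1$ using the minimality of $\FF$. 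To repair your proposal you would have to replace the monotonicity step by this extremal--slope selection; note also that your orientation convention is reversed relative to the statement, where the blowup is $H\cap\{\langle\nu,x\rangle\le 0\}$ with $H$ the blowup of $An$ itself.
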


\begin{proof} We denote  $G^+=G\cap H$ for every $G\subset\R^3$. Up to a translation and a rotation, we assume $y=0$ and  \(e_{2}=\dot \gamma/|\dot \gamma|\), hence arguing as in the proof of Proposition \ref{noblow}, compare also with \cite[Lemma 5.3]{DM}, we deduce the existence of $L=L(F,c)$ such that
\begin{equation}\label{c2}
\sup_{(\partial E)^+}\,\frac{ |\langle x, e_3\rangle|}{\langle x , e_1\rangle}\le L\,,\qquad\forall E\in TV(y,\Omega)\,.
\end{equation}
Moreover
\begin{equation}
  \label{c5}
 \pa E \cap \pa H = \{0\}\times \R \times\{0\}\,,\qquad\forall E\in TV(y,\Omega)\,.
\end{equation}
We define $\xi:TV(y,\Omega)\to[-L,L]$ as
\begin{eqnarray*}\label{c6}
\xi(E)=\inf_{(\pa E)^+}\,\frac {\langle x, e_3\rangle}{\langle x , e_1\rangle}\,,&&\qquad \forall E\in TV(y,\Omega)\,.
\end{eqnarray*}
One can easily check that $\xi$ is upper semicontinuous on $TV(y,\Omega)$ with respect to the $L^1_{{\rm loc}}(\R^3)$ convergence. Indeed, if \(E_{h},E\in TV(y,\Omega)\) and $E_h\to E$ in $L^1_{\rm loc}(H)$, then, for every \(x\in H\cap \pa E\) there exist \(x_h\in H\cap\pa E_h\), $h\in\N$, such that \(x_h \to x\) as $h\to\infty$. Hence,
\[
\frac{\langle x, e_3\rangle}{\langle x, e_1 \rangle}=\lim_{h\to\infty} \frac{\langle x_h, e_3\rangle}{\langle x_h , e_1\rangle}\ge \limsup_{h\to\infty} \xi (E_h)\,,
\]
as claimed. Since $TV(y,\Omega)$ is compact in $L^1_{{\rm loc}}(\R^3)$, we deduce the existence of $E_1\in TV(y,\Omega)$ such that
\begin{equation}\label{c7}
\xi(E_1)\ge\xi(E)\,,\qquad\forall E\in TV(y,\Omega)\,.
\end{equation}

Let us fix \(\alpha \in (-\pi/2,\pi /2)\) so that $\tan \alpha=\xi(F_1)$ and set
\[
\nu_1=\cos \alpha\, e_3-\sin \alpha \,e_1\in \mathbb S^{2}\,,\qquad H_1=\Big\{x\in H:\langle x, \nu_1\rangle \le0\Big\}\,.
\]
We now prove that
\begin{eqnarray}\label{ccccc}
(\pa H_1)^+ \subset (\pa E_1)^+\,.
\end{eqnarray}
Indeed, by definition of $\xi$, it holds
\begin{equation}
  \label{c8}
  (\pa E_1)^+\subset  H_1\,.
\end{equation}
Moreover, denoting with $w:\{z\in\R^{2}:p_1>0\}\to [-\infty,+\infty)$ the function satisfying
\[
w(z)=\inf\Big\{t\in\R:  (z,t)\in \pa E_1\Big\}\,,\qquad \forall z\in\{z\in\R^{2}:p_1>0\}\,,
\] 
we deduce from  \eqref{c5}, \eqref{c8}, and the lower semicontinuity of $w$ that
\begin{eqnarray}
  \label{c9}
  \Big\{x\in H:x_3\le w(x_1,x_2)\Big\}\subset E_1\,,&&
  \\
  \label{c10}
  w(x_1,x_2)\ge \xi(E_1)\,x_1\,,&&\qquad\forall (x_1,x_2)\in\{z\in\R^{2}:p_1>0\}\,.
\end{eqnarray}
If \eqref{ccccc} fails, then there exists $\bar x\in (\pa E_1)^+$ such that
\begin{equation}
  \label{c11}
  w(\bar x_1,\bar x_2)> \xi(E_1)\,\bar x_1\,.
\end{equation}
By \eqref{c10} and \eqref{c11}, if we set \(\bar r=|(\bar x_1,\bar x_2)|\), $\bar z=(\bar x_1,\bar x_2)$ and $D_{\bar r}=B_{\bar r}\cap \R^2\times \{0\}$, then we can find \(\vphi\in C^{1,1}(\pa(D_{\bar r}^+))\) such that
\begin{eqnarray}\label{c12}
w(z)\ge \vphi (z)\ge \xi(E_1)\,\langle z, e_1\rangle\,,&&\qquad\forall z\in \pa (D_{\bar r}\cap H)
\\
\vphi(\bar z)>\xi(E_1)\, \langle \bar z, e_1\rangle\,.
\end{eqnarray}
In particular, $\vphi=0 $ on $D_{\bar r}\cap\pa H$. By part two of \cite[Lemma 2.11]{DM}, there exists \(u\in C^{1,1}(D_{\bar r}^+)\cap\Lip(\mbox{cl}(D_{\bar r}^+))\) such that, if we set $G_0^\#(p)=G_0(p,-1)$ for $p\in\R^{2}$, then
\begin{equation*}
\begin{cases}
\Div (\nabla_{p} G_0^\#(\nabla u))=0\,,\qquad&\textrm{in \(D_{\bar r}^+\)}\,,
\\
u=\varphi\,,&\textrm{on \(\pa (D_{\bar r}^+)\)}\,,
\end{cases}
\end{equation*}
with
\begin{equation}\label{c14}
|\nabla u(0)|=|\langle \nabla u(0), e_1\rangle |>\xi(E_1)\,.
\end{equation}
Since $\Omega$ minimizes $\FF^c$ in the class $\mathcal{P}(\Omega, B_{\rho} (y)\cap An)$, then the blowup $E_1$ satisfies
\begin{equation}\label{c3}
\FF(E_1,\mbox{cl}(H))\leq \FF(F,\mbox{cl}(H)), \quad \mbox{whenever } E_1 \Delta F \subset H,
\end{equation}
and in particular:
$$\FF(E_1,\mbox{cl}(D_{\bar r}^+\times\R))\leq \FF(F,\mbox{cl}(D_{\bar r}^+\times\R)), \quad \mbox{whenever } E_1 \Delta F \subset D_{\bar r}^+\times\R.$$ 
This, combined with \eqref{c9} and \eqref{c12}, allows us to apply \cite[Lemma 2.12]{DM} to infer that
\begin{equation}
  \label{c15}
 \Big\{(z,t)\in D_{\bar r}^+\times\R:t\le u(z)\Big\} \subset E_1\cap(D_{\bar r}^+\times\R)\, \qquad \mbox{up to zero $\H^3$-measure sets}.
\end{equation}
If we now pick a sequence $\{s_h\}_{h\in\N}$ such that $s_h\to 0$ as $h\to\infty$ and $\eta_{0,s_h}(E_1)\to\widetilde{E_1}$ in $L^1_{\rm loc}(\R^3)$, then, by \eqref{c15} and $u(0)=0$, we find that
\[
\Big\{(z,t):t\le \langle \nabla u_0(0), e_1\rangle \langle z , e_1\rangle \Big\}\subset \widetilde{E_1}\,,
\]
so that, thanks to \eqref{c14}, $\xi(\widetilde{E_1})>\xi(E_1)$. Since $\widetilde{E_1}\in TV(0,E_1)\subset TV(y,\Omega)$, this contradicts \eqref{c7}, and completes the proof of \eqref{ccccc}. Since $\partial \Omega\cap  B_{\rho} (y)\setminus An$ is smooth, by \eqref{c5}, \eqref{ccccc}, and \eqref{c8}, we conclude that there exists $\nu\in \mathbb S^{2}\cap \langle e_1,e_3\rangle\setminus\{\pm e_1\}$ such that
\begin{equation}
  \label{c16}
 \pa W \subset \pa E_1\,\qquad\mbox{and}\qquad W \subset E_1\,, \,\qquad\mbox{where}\qquad W:=\{\langle x , \nu \rangle \le0\}\cap H_1.
\end{equation}
The two properties above \eqref{c16} imply that 
\begin{equation}\label{c4}
\H^2(\partial (E_1\setminus W) \cap \partial W)=0,
\end{equation}
compare with \cite[Section 16.1]{maggiBOOK} or with \cite[Equation (2.7)]{DM}. We claim that this implies that $W=E_1$ up to zero $\H^3$-measure sets. Indeed if this was not case, by \eqref{c4} we would deduce that $\FF(\partial W)<\FF(\partial E_1)$, which would contradict \eqref{c3}, as $E_1 \Delta F \subset H$ by \eqref{c2}. Hence $W\in TV(y,\Omega)$, which implies the desired \eqref{c1}.
\end{proof}

\end{document}